\title{Proper affine deformations of positive representations}
\author{Jean-Philippe Burelle\thanks{This author acknowledges the support of the Natural Sciences and Engineering Research Council of Canada (NSERC), [funding reference number RGPIN-2020-05557]} \and Ne\v{z}a \v{Z}ager Korenjak}
\newtheorem{prop}{Proposition}[section]
\newtheorem{lem}[prop]{Lemma}
\theoremstyle{definition}
\newtheorem{defn}[prop]{Definition}
\newtheorem{rem}[prop]{Remark}
\newtheorem{thm}[prop]{Theorem}
\newtheorem{question}[prop]{Question}
\newcommand{\bR}{\mathbb{R}}
\newcommand{\bH}{\mathbb{H}}
\newcommand{\mA}{\mathcal{A}}
\newcommand{\mC}{\mathcal{C}}
\newcommand{\SL}{\mathrm{SL}}
\newcommand{\GL}{\mathrm{GL}}
\newcommand{\PSL}{\mathrm{PSL}}
\newcommand{\SO}{\mathrm{SO}}
\newcommand{\Flag}{\mathrm{Flag}}
\newcommand{\diag}{\mathrm{diag}}
\newcommand{\bilin}{\cdot}
\newcommand{\half}{\mathcal{H}}
\newcommand{\ival}[2]{(\!(#1,#2)\!)}
\newcommand{\opp}{\mathrm{opp}}
\renewcommand{\vec}{\mathbf}
\begin{document}

\maketitle
\begin{abstract}
    We define for every positive Anosov representation of a nonabelian free group into $\SO(2n,2n-1)$ a family of $\bR^{4n-1}$-valued cocycles which induce proper affine actions on $\bR^{4n-1}$. We construct fundamental domains in $\bR^{4n-1}$ bounded by generalized crooked planes for these affine actions, and deduce that the quotient manifolds are homeomorphic to handlebodies.
\end{abstract}

\section{Introduction}
An affine manifold is a smooth manifold equipped with a flat affine connection. If it can be obtained by taking the quotient of the affine space $\bR^n$ by a discrete subgroup of affine transformations acting properly and pushing forward the trivial connection, it is called complete. The problem of classifying complete affine manifolds is therefore the same as the classification of proper actions on $\bR^n$ by discrete groups of affine transformations.

An action of a discrete group $\Gamma$ by affine transformations is determined by two pieces of data: a \emph{linear part} $\rho: \Gamma \to \GL(n,\bR)$ which is a representation of the group, and a \emph{translational part} $u: \Gamma \to \bR^n$ which is a $\rho(\Gamma)$-cocycle. If the action of $(\rho, u)(\Gamma)$ on $\bR^n$ is properly discontinuous, we will call $u$ a proper (affine) deformation of $\rho(\Gamma).$

Unlike complete Euclidean manifolds whose classification is well understood, the classification of both closed and non-compact affine manifolds is still mysterious in general. Even though affine manifolds are flat, their fundamental groups can be hyperbolic. Examples of properly discontinuous affine actions by a nonabelian free group on $\bR^3$ were first discovered by Margulis in \cite{Margulis1, Margulis2}, in response to Milnor \cite{Milnor} asking whether complete affine manifolds with nonabelian free fundamental group exist.

In fact, properly discontinuous affine actions by free groups on $\bR^3$ are fully understood. Fried--Goldman \cite{friedgoldman} proved that the linear part of each such action must preserve a symmetric bilinear form of Lorentzian signature, and therefore defines a hyperbolic surface  $\bH^2 / \rho(\Gamma).$ Given such a linear part, Danciger--Guéritaud--Kassel in \cite{DGK} describe all cocycles $u: \Gamma \to \bR^3$ such that $(\rho(\Gamma), u(\Gamma))$ acts properly on $\bR^3$. They do this in terms of \emph{strip deformations} of the hyperbolic surface $\bH^2 / \rho(\Gamma).$ Their approach also allows them to construct fundamental domains for these actions which are bounded by \emph{crooked planes}, first introduced by Drumm in \cite{Drumm}. Crooked planes have been explored further for instance in \cite{DrummGoldman2, CDG,BCDG}. Though initially an object in three dimensions,  Burelle and Treib in \cite{bt2022} give a notion of crooked half-spaces in higher-dimensional spheres and projective spaces which bound fundamental domains for \emph{positive representations}. 

In higher dimensions, the story of complete affine manifolds is less complete. Abels--Margulis--Soifer and Smilga give some criteria for which Lie groups admit discrete free subgroups such that there exist translational parts giving rise to a proper affine action; see \cite{AMS, smi14, Smilga2016,Smilga2018,Smilga2022}. In \cite{Zager}, some higher-dimensional examples for Fuchsian groups are constructed, adapting ideas in \cite{DGK}. In \cite{Smilga2014}, Smilga also constructs fundamental domains called tennis ball domains for affine actions of free groups with strongly contracting linear part in $\SO(2n,2n-1).$

The goal of this paper is to sharpen Smilga's result for a specific class of linear parts: \emph{positive} representations of free groups into $\SO(2n,2n-1)$, in the sense of Fock and Goncharov, see \cite{FG, GW}. These representations are defined in terms of a specific choice of punctured surface, and have geometric properties which relate closely with the geometry at infinity of the surface and its fundamental group (see Section \ref{sec:positive_reps} for the definition).

\begin{thm}\label{thm:proper_defs_exist}
    Let $S$ be a closed surface with $k\ge 1$ punctures. Let $\rho: \pi_1(S) \to \SO(2n,2n-1)$ be a positive Anosov representation. Then, there exists a proper affine deformation of $\rho$.
\end{thm}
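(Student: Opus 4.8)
The plan is to prove the theorem constructively, by exhibiting an explicit cocycle. For a suitably chosen free generating set $\gamma_1,\dots,\gamma_r$ of $\pi_1(S)$, I would build a Schottky-type ping-pong configuration of $2r$ pairwise disjoint generalized crooked half-spaces in $\bR^{4n-1}$, adapted to the boundary data of $\rho$, and then choose the translational part $u$ so that the affine maps $\gamma_i\mapsto(\rho(\gamma_i),u(\gamma_i))$ play ping-pong with these half-spaces. Properness of the resulting affine action then follows from the ping-pong lemma, and the complement of the half-spaces is a fundamental domain; this is also what yields the handlebody description in the abstract, by identifying the crooked-plane faces in pairs.

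More precisely, I would first recall that a positive Anosov representation $\rho\colon\pi_1(S)\to\SO(2n,2n-1)$ carries a continuous, $\rho$-equivariant, positive boundary map $\xi\colon\partial_\infty\pi_1(S)\to\Flag(\bR^{4n-1})$ sending distinct points to transverse flags, and that the generators may be chosen (from a standard system of disjoint arcs on $S$) so that the attracting and repelling flags $\xi_1^\pm,\dots,\xi_r^\pm$ of $\rho(\gamma_1),\dots,\rho(\gamma_r)$ appear in the cyclic order on $\partial_\infty\pi_1(S)$ induced by the surface. To each such flag I would attach a generalized crooked half-space in $\bR^{4n-1}$, following the description of crooked half-spaces in signature $(2n,2n-1)$ adapted from \cite{bt2022} and \cite{Smilga2014}. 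The crux is then a \emph{positivity-implies-disjointness} lemma: when the base flags are pairwise transverse and positively ordered, the crooked half-spaces $H_1^\pm,\dots,H_r^\pm$ can be positioned to have pairwise disjoint closures. This is the step where positivity of $\rho$, rather than mere transversality of the $\xi_i^\pm$, is essential, and I expect it to be the main obstacle: in dimension three it is precisely the statement that crooked planes along a consistently oriented system of geodesic axes are disjoint, and in higher dimensions one must use the finer flag-order information to control the interactions of the stems and wings.

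Given such a disjoint configuration, I would choose the translational part. Since $\pi_1(S)$ is free, a cocycle $u$ is determined freely by arbitrary values $u(\gamma_i)\in\bR^{4n-1}$; I would take $u(\gamma_i)$ to be a large translation along the direction neutral for $\rho(\gamma_i)$ (the analogue of the Margulis axis), so that the affine generator maps the exterior of $H_i^-$ into $H_i^+$. The required strict dynamical inclusions follow from the uniform north–south dynamics of $\rho$ on flag space (a consequence of the Anosov property), with the translation absorbing the boundary case once its length is taken large enough. Invoking the ping-pong lemma for affine transformations then shows that the group generated by the $(\rho(\gamma_i),u(\gamma_i))$ is free of rank $r$, acts properly discontinuously on $\bR^{4n-1}$ with $\bR^{4n-1}\setminus\bigcup_i\bigl(H_i^+\cup H_i^-\bigr)$ as a fundamental domain, so $u$ is a proper affine deformation of $\rho$, completing the proof.
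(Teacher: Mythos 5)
Your overall strategy---crooked half-spaces adapted to the positive boundary data, paired by the affine generators, with the complement as a fundamental domain---is the same in spirit as the paper's Section \ref{domains}. But two of your steps fail as stated, and they are precisely the two places where the real work lies. First, the translational part. You take $u(\gamma_i)$ to be a large translation along the neutral direction of $\rho(\gamma_i)$ and expect it to ``absorb the boundary case.'' It cannot: the set of vectors translating a crooked half-space inside itself is the stem quadrant, which in an adapted basis is the cone on $-\vec{e}_1$ and $\vec{e}_{4n-1}$; the neutral vector $\vec{e}_{2n}$ does not lie in it (already in $\bR^{2,1}$ a crooked plane and its translate along the neutral direction have overlapping wings). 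Worse, if the half-spaces are based at the attracting/repelling flags of $\rho(\gamma_i)$, as you propose, then the linear part preserves them setwise, so no amount of neutral translation produces the strict ping-pong inclusions. The paper avoids both problems: the half-spaces are based at flags assigned to the endpoints of a filling arc system (strictly moved by the generators, Proposition \ref{prop:intervals}), and the translations are the strip-deformation vectors $\vec{v}_a^+ - \vec{v}_a^-$, which do lie in the relevant stem quadrants---this is what makes disjointness (Lemma \ref{lem:disjointness}) work with no largeness hypothesis at all, which is exactly the improvement over Smilga.

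Second, even granting disjoint half-spaces and the pairing inclusions, ``the ping-pong lemma for affine transformations'' does not give properness of the action on all of $\bR^{4n-1}$; it gives freeness and disjointness of the translates of the complement. Properness requires showing that these translates cover affine space, and that covering statement is the substance of the paper's Theorem \ref{thm:domain}: a nested sequence of translated half-spaces $\half(a_k)$ is shown to have empty intersection via the uniform lower bound of Lemma \ref{lem:uniform_on_surface} and the estimate $\vec{x}^0(X_k,Y)^*(p)\ge (k-2)m$. Alternatively, the paper's first proof bypasses fundamental domains entirely: it computes the Margulis invariant of the strip-deformation cocycle, shows via the positivity Lemmas \ref{Positivity}--\ref{bounded} that it grows linearly in translation length, and invokes the Ghosh--Treib criterion (Proposition \ref{GT}). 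Your proposal contains neither a covering argument nor a properness criterion, so as written it establishes at most discreteness and freeness of the affine group, not that $u$ is a proper affine deformation.
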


In contrast to Smilga's result, no quantitative hypothesis is made on the contraction strength of the representation $\rho$. To obtain this theorem, we define a generalized notion of the infinitesimal strip deformations used by Danciger--Guéritaud--Kassel. In this generalization, the \emph{flag intervals} in $\Flag^+(\bR^{4n-1})$ defined in \cite{bt2022} play the role of hyperbolic geodesics, and the infinitesimal strip deformations can be interpreted as an infinitesimal shrinking of flag intervals. Using results on the Margulis invariant by Goldman--Labourie--Margulis and Ghosh--Treib \cite{GLM, GhoshTreib}, we then prove that all cocycles obtained by this method yield proper affine actions.

Danciger and Zhang proved in 2019 that the analog of Theorem \ref{thm:proper_defs_exist} for \emph{closed} surfaces is false~\cite{dancigerzhang}: they showed that positive (or \emph{Hitchin}) representations of closed surface groups can never admit proper affine deformations.

We reprove properness via a different method not  relying on the machinery of diffused Margulis invariants from \cite{GhoshTreib}. We construct fundamental domains and show that they tile all of affine space:

\begin{thm}
    All proper affine actions $(\rho,u)$ obtained from generalized infinitesimal strip deformations admit fundamental domains bounded by crooked planes. The quotient manifolds $\bR^{4n-1}/(\rho,u)(\Gamma)$ are homeomorphic to handlebodies.
\end{thm}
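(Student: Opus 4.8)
The plan is to mimic the Danciger–Guéritaud–Kassel construction of crooked fundamental domains, replacing hyperbolic geodesics and crooked planes in $\bR^{2,1}$ with the flag intervals and generalized crooked half-spaces of Burelle–Treib in $\Flag^+(\bR^{4n-1})$ and their linearizations in $\bR^{4n-1}$. First I would fix a positive Anosov representation $\rho:\pi_1(S)\to\SO(2n,2n-1)$ and a cocycle $u$ arising from a generalized infinitesimal strip deformation; by Theorem~\ref{thm:proper_defs_exist} and the discussion following it, $(\rho,u)$ acts properly. I would choose a topological ideal triangulation $\tau$ of the punctured surface $S$ supporting the strip deformation, together with the dual trivalent spine, so that the edges of $\tau$ give rise to a finite $\pi_1(S)$-equivariant collection of flag intervals, one per edge-orbit, and hence to a collection of generalized crooked planes in $\bR^{4n-1}$.

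The key steps, in order: (1) For each edge of the universal cover $\widetilde\tau$, promote the associated flag interval to an affine generalized crooked plane in $\bR^{4n-1}$, using the translational data of $u$ to position it — this is the higher-rank analogue of ``a crooked plane through the point determined by the Margulis invariant.'' The infinitesimal strip widths must be turned into genuine translation vectors, and the positivity of $\rho$ together with the transversality of the limit flags guarantees the relevant stems and wings are in general position. (2) Show the crooked planes associated to distinct edges of $\widetilde\tau$ are pairwise disjoint. The combinatorial input is that two ideal triangulation edges are either disjoint or share an ideal endpoint; in the shared-endpoint case one invokes the consistent-orientation/positivity condition, exactly as the disjointness criterion for crooked planes in \cite{bt2022} requires a ``compatible'' configuration of flags, and the strip-deformation sign conventions ensure this holds. (3) Conclude that the closed region $D$ bounded by the crooked planes dual to the edges meeting a fixed ideal triangle (equivalently, a fixed vertex of the spine) is a fundamental domain: the $\pi_1(S)$-translates of $D$ have disjoint interiors (from step 2) and cover $\bR^{4n-1}$. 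Covering is the Poincaré-polyhedron-style argument: one shows the nerve of the tiling is the Bass–Serre tree of the free group, and that crossing each crooked wall corresponds to applying the generator dual to that edge, so the union of translates is open, closed, and nonempty. (4) Identify the quotient topologically: $D$ is homeomorphic to a ball (a crooked plane divides $\bR^{4n-1}$ into two pieces each homeomorphic to a half-space, and a finite transverse intersection pattern of them cuts out a ball), and the face identifications realize $\bR^{4n-1}/(\rho,u)(\Gamma)$ as a handlebody of genus equal to the rank of $\pi_1(S)$, since gluing a ball to itself along a pattern of disk-pairs indexed by a free generating set yields a handlebody.

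The main obstacle I expect is step~(2), the disjointness of the crooked planes: in rank one this is Drumm's and DGK's delicate ``crooked plane disjointness'' lemma, and in higher rank one must both have the right definition of when two generalized crooked planes in $\bR^{4n-1}$ are disjoint (presumably imported from \cite{bt2022}, which handles the projective/spherical picture) and verify that the affine positions dictated by a strip deformation satisfy it — this couples the combinatorics of the ideal triangulation, the positivity of the boundary map, and the signs of the strip widths. A secondary difficulty is that \cite{bt2022} constructs crooked fundamental domains for the linear (projective) action, so one must upgrade that to the affine setting and check that properness of the affine action (which we already know) is compatible with, indeed implied by, the tiling; the cleanest route is to run the Poincaré polyhedron theorem for the affine action directly, using the disjointness from step~(2) as the hypothesis and reading off properness as a byproduct, which also gives the promised second, Margulis-invariant-free proof of properness.
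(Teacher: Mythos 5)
Your overall architecture (affine crooked half-spaces attached to the arcs/flag intervals, pairwise disjointness, side pairings by the generators, then a covering argument) matches the paper's, but there is a genuine gap at the covering step, and the disjointness step is left without its actual mechanism. For disjointness, the paper does not appeal to a ``compatible configuration'' criterion in the sense of \cite{bt2022} alone: it places each crooked hyperplane at the specific translation $\tilde u(a)$ in which the last crossed arc is weighted by $\tfrac12$, proves that the stem-quadrant of $\half_E$ is exactly the cone spanned by $-\vec{e}_1$ and $\vec{e}_{4n-1}$, and observes that the relative displacement between consecutive arcs is a sum of two such stem-quadrant vectors; combined with nesting of flag intervals (Lemma \ref{lem:intervals_proper} and Proposition \ref{prop:quadruples}) this pushes each translated half-space strictly inside the previous one. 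Without the half-weighting and the stem-quadrant computation, neither the equivariance $(\rho,u)(\gamma_i)\mC(a_i)=\mC(a_i')$ nor the disjointness comes out. Also note the paper works with disjoint properly embedded arcs whose associated flag intervals have disjoint closures; your ideal-triangulation edges sharing an ideal vertex would destroy the strict positivity/transversality of the flag quadruples that the disjointness argument needs.

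The more serious problem is step (3). There is no Poincar\'e-polyhedron theorem in the affine setting that lets you deduce covering (or properness) from disjointness plus side pairings: the union of translates of the candidate domain is indexed by the Bass--Serre tree, and its complement is the union, over ends of the tree, of intersections of nested crooked half-spaces. Such a nested intersection can perfectly well be nonempty (this is exactly what happens for non-proper cocycles, e.g.\ $u=0$ after perturbing), so ``open, closed, nonempty'' does not close the argument. The paper's proof of Theorem \ref{thm:domain} is precisely a quantitative exclusion of this failure mode: if $p\in\bigcap_k\half(a_k)$ along a sequence of consecutive arcs, one evaluates the functionals $\vec{x}^0(X_k,Y)^*$ on $p=x_k+\tilde u(a_k)$ and shows, via the uniform lower bound of Lemmas \ref{lem:uniform_bound_coordinate} and \ref{lem:uniform_on_surface} (which uses cocompactness to reduce to finitely many configurations of three consecutive arcs), that $\vec{x}^0(X_k,Y)^*(p)\ge (k-2)m$ grows linearly while the functionals converge --- a contradiction. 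This uniform-growth estimate is the missing idea in your proposal; it is also what makes the construction a genuinely independent second proof of properness, rather than properness being ``read off as a byproduct'' of a soft tiling argument.
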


Our construction depends on a choice of arcs and further on a choice of extension of the positive boundary map associated to the linear part. For a fixed choice, we do \emph{not} obtain all the possible proper affine deformations of the given linear part in dimension bigger than $3$. However, it is natural to ask
\begin{question}
    How does the deformation depend on these choices? Varying over all the choices, do we get all proper affine deformations of a given positive linear part?
\end{question}
If the answer to this question is yes, then in particular we would be able to construct fundamental domains for all proper affine deformations with given (free positive) linear part, showing that they are all \emph{tame} (the quotients are homeomorphic to the interior of a compact manifold with boundary).

By a dimension count we know that there are redundancies and that some choices of the extension of the boundary map will give conjugate cocycles. However, for $\SO(2,1),$ one can make some canonical choices (see \cite{DGK}) and obtain a parametrization of the space of proper cocycles. It is not clear whether there is an analog of this for generalized infinitesimal strip deformations leading to a parametrization of the cone of proper cocycles.

In Section \ref{positive_pingpong}, we recall the notion of positivity for tuples of flags in $\bR^{2n,2n-1}.$ We gather basic facts about positive representations of (non-compact) surface groups into $\SL(4n-1)$ and $\SO(2n,2n-1).$ This section is mostly for background and fixing notation. For more detailed proofs, see \cite{bt2022}.

In Section \ref{construction}, we define a generalized infinitesimal strip deformation -- a cocycle associated to an extended boundary map of a free group with an arc system. In Section \ref{margulisinvt}, we compute the Margulis invariant of such a cocycle and show that the cocycle determines a proper action of a free group on $\bR^{2n,2n-1}.$

In Section \ref{domains}, we use \emph{crooked half-spaces} to construct fundamental domains for actions from Section \ref{construction}. This gives an alternate proof of Theorem \ref{thm:proper_defs_exist}.

\section{Positive Ping-pong}\label{positive_pingpong}

Fix  the standard orientation on $\bR^{4n-1}$ given by the canonical basis. An \emph{oriented flag} $F$ in $\bR^{4n-1}$ is a $4n$-tuple of properly nested vector subspaces
\[0=F^{(0)} \subset F^{(1)} \subset \dots \subset F^{(4n-2)} \subset F^{(4n-1)} = \bR^{4n-1},\]
together with a fixed choice of orientation on $F^{(i)}$, so that the orientation on $F^{(4n-1)}$ agrees with that of $\bR^{4n-1}$. Equivalently, one can specify a choice of $1$-dimensional orientation on each of the quotients $F^{(i)}/F^{(i-1)}$ for $i=1,\dots,4n-1$. The group $\SL(4n-1,\bR)$ acts transitively on the set of oriented flags, and so we can endow it with a topology by identifying it with a coset space: $\Flag^+(\bR^{4n-1}):= \SL(4n-1,\bR)/B^0$, where $B^0$ is the subgroup of upper triangular matrices with positive entries on the diagonal.

Let $\bilin$ denote the symmetric bilinear form of signature $(2n,2n-1)$ given by the following matrix in the standard basis
\[J = \begin{pmatrix}
 0 & 0 & 0 & \dots & 0 & 0 & -1\\
 0 & 0 & 0 & \dots & 0 & 1 & 0\\
 0 & 0 & 0 & \dots & -1 & 0 & 0\\
 \vdots & \vdots & \vdots & \iddots & \vdots & \vdots & \vdots\\
 0 & 0 & -1 & \dots & 0 & 0 & 0\\
 0 & 1 & 0 & \dots & 0 & 0 & 0\\
 -1 & 0 & 0 & \dots & 0 & 0 & 0\\
\end{pmatrix}.\]
We will call a basis of $\bR^{4n-1}$ a $J$-basis if, in that basis, the bilinear form $\cdot$ has matrix $J$.

\begin{defn}\label{def:flag}
    
An \emph{oriented isotropic flag} is an oriented flag such that
\begin{enumerate}
    \item $F^{(i)} = (F^{(4n-i)})^\perp$ for $i=1,\dots,4n-1$, where the orthogonal is taken with respect to $~\bilin~$ and
    \item For any choice of positive vectors $v \in F^{(i+1)}/F^{(i)}$ and $u\in (F^{(i)})^\perp/(F^{(i+1)})^\perp$ with $0\le i\le 4n-2$, the well-defined product $(-1)^{i+1} u \bilin v$ is positive.
\end{enumerate}
\end{defn}
The subgroup of $\SL(4n-1,\bR)$ preserving $\bilin$ will be denoted by $\SO(2n,2n-1)$. Denote by $\Flag^+(\bR^{2n,2n-1})$ the space of oriented isotropic flags.

\begin{defn}
    A pair of oriented flags $F,G$ is \emph{oriented transverse} if for all $0\le i\le 4n-1$ we have a direct sum
    \[F^{(i)}\oplus F^{(4n-1-i)} = \bR^{4n-1},\]
    and the induced orientation matches with the fixed orientation on $\bR^{4n-1}$.
\end{defn}

Because the dimension of the ambient space is odd, oriented transversality is a symmetric relation. The group $\SL(4n-1,\bR)$ acts transitively on pairs of oriented transverse flags (\cite{bt2022}, Lemma 2.19). Similarly, it follows from the next proposition that the group $\SO(2n,2n-1)$ acts transitively on pairs of oriented transverse isotropic flags.

\begin{prop}\label{prop:JBasis}
    A pair of oriented isotropic flags $F,G$ is oriented-transverse if and only if there exists a positively oriented $J$-basis $E = (e_1,\dots,e_{4n-1})$ such that
    \[F = \{0\} \subset \langle e_1 \rangle \subset \langle e_1,e_2 \rangle \subset \dots \subset \langle e_1,e_2, \dots, e_{4n-1} \rangle\]
    and
    \[G = \{0\} \subset \langle e_{4n-1} \rangle \subset \langle e_{4n-2},e_{4n-1} \rangle \subset \dots \subset \langle e_1,e_{2}, \dots, e_{4n-1} \rangle.\]
    Given a basis $E$, we denote this pair by $(F_E, F_{\widehat{E}})$, where $\widehat{E}$ is the \emph{opposite basis} $(e_{4n-1}, -e_{4n-2}, e_{4n-3}, \dots, -e_2,e_1)$. Note that $\widehat{E}$ is a $J$-basis if and only if $E$ is a $J$-basis.
\end{prop}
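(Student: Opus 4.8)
The plan is to prove the two implications separately, with the reverse direction being essentially a computation and the forward direction requiring an inductive construction of the basis. For the reverse implication, suppose such a positively oriented $J$-basis $E=(e_1,\dots,e_{4n-1})$ exists. Oriented transversality of $F_E$ and $F_{\widehat E}$ amounts to checking that $\langle e_1,\dots,e_i\rangle \oplus \langle e_{i+1},\dots,e_{4n-1}\rangle = \bR^{4n-1}$ with matching orientation for each $i$, which is immediate since $(e_1,\dots,e_{4n-1})$ is a (positively oriented) basis; one only needs to track the sign introduced by reordering the basis vectors and by the alternating signs in $\widehat E$, and confirm it is $+1$ because the ambient dimension $4n-1$ is odd. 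One should also verify that $F_E$ and $F_{\widehat E}$ are genuinely oriented \emph{isotropic} flags, i.e.\ that conditions (1) and (2) of Definition \ref{def:flag} hold: condition (1) follows from the anti-diagonal shape of $J$ (so that $\langle e_1,\dots,e_i\rangle^\perp = \langle e_1,\dots,e_{4n-1-i}\rangle$), and condition (2) follows from reading off the entries $e_i \bilin e_{4n-i} = \pm 1$ of $J$ and checking the sign $(-1)^{i+1}$ works out, which is exactly how $J$ was designed.

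For the forward implication, assume $F,G$ is an oriented-transverse pair of oriented isotropic flags. I would build the $J$-basis one line at a time using the filtration. Transversality gives $F^{(i)} \oplus G^{(4n-1-i)} = \bR^{4n-1}$ for all $i$, so the lines $\ell_i := F^{(i)} \cap G^{(4n-i)}$ are well-defined, one-dimensional, and their direct sum is $\bR^{4n-1}$; pick $e_i$ spanning $\ell_i$ with the orientation induced from $F^{(i)}/F^{(i-1)}$. By construction $F = F_E$, and a short check using $\ell_i \subset G^{(4n-i)}$ together with dimension count shows $G = F_{\widehat E}$ up to the signs recorded in $\widehat E$. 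The substance is then to show: (a) $E$ can be rescaled so that it is a $J$-basis, i.e.\ $e_i \bilin e_{4n-i} = (-1)^{\lceil i/2\rceil}$ or whatever sign pattern $J$ dictates, and all other pairings vanish; and (b) after this rescaling $E$ is still \emph{positively} oriented. Vanishing of $e_i \bilin e_j$ for $i+j \ne 4n$ follows from condition (1) of Definition \ref{def:flag}: $e_i \in F^{(i)} = (F^{(4n-i)})^\perp$ and $e_j \in F^{(j)} \subseteq F^{(4n-i-1)} \subset F^{(4n-i)}$ when $j < 4n-i$. The nonvanishing and the \emph{sign} of $e_i \bilin e_{4n-i}$ is exactly what positivity condition (2) controls: identifying $e_i$ with a positive vector in $F^{(i)}/F^{(i-1)}$ and $e_{4n-i}$ with a vector representing a class in $(F^{(i-1)})^\perp/(F^{(i)})^\perp$, condition (2) forces $(-1)^{i} e_i \bilin e_{4n-i} > 0$, so after rescaling the $e_i$ by positive scalars (which preserves both orientation and the property of being in $\ell_i$) we can arrange $e_i \bilin e_{4n-i}$ to equal the prescribed entry of $J$. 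One must do the rescaling in compatible pairs $\{i, 4n-i\}$ and treat the middle index carefully, but the positivity of all the scalars involved is what guarantees the rescaled $E$ remains positively oriented.

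I expect the main obstacle to be bookkeeping the signs: matching the alternating $(-1)^{i+1}$ in Definition \ref{def:flag}(2), the alternating signs $(-1)^{i+1}$ appearing in the opposite basis $\widehat E = (e_{4n-1}, -e_{4n-2}, \dots, -e_2, e_1)$, and the anti-diagonal sign pattern of $J$, all simultaneously, and confirming that they are mutually consistent rather than off by a global sign. A clean way to organize this is to first do the case $4n-1=3$ (i.e.\ $\SO(2,1)$) by hand to pin down conventions, then observe that the general anti-diagonal form of $J$ reduces the $4n-1$ case to $2n$ independent hyperbolic planes $\langle e_i, e_{4n-i}\rangle$ plus, if one insists on exact $J$-normalization, a careful treatment of the self-paired middle vector $e_{2n}$ with $e_{2n}\bilin e_{2n} = \pm 1$. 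The final sentence of the proposition — that $\widehat E$ is a $J$-basis iff $E$ is — is then a direct verification from the definition of $\widehat E$ and the symmetry of the anti-diagonal pattern of $J$ under the substitution $e_i \mapsto (-1)^{i+1} e_{4n-i}$, and can be checked by a one-line matrix computation.
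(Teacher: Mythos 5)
Your proposal is correct in outline but organized quite differently from the paper. The paper's proof is a two-line reduction: it cites Lemma 2.19 of \cite{bt2022}, which already produces, for any oriented-transverse pair of oriented flags, a positively oriented basis $E$ with $F=F_E$ and $G=F_{\widehat E}$, and then observes that isotropy of $F$ and $G$ is exactly what allows each $e_i$ to be rescaled by a \emph{positive} scalar so that the form becomes $J$, without changing the flags or their orientations. You instead rebuild the adapted basis from scratch via the lines $\ell_i = F^{(i)}\cap G^{(4n-i)}$ — which is essentially a proof of the cited lemma — and then perform the same rescaling. What your route buys is self-containedness; what it costs is that you must actually carry out the orientation bookkeeping (that oriented transversality of $F^{(i)}\oplus G^{(4n-1-i)}$ for every $i$ forces the orientations of the $G^{(j)}$ to be precisely those induced by $\widehat E$, i.e.\ that the alternating signs in $\widehat E$ cancel the sign of reversing a block of basis vectors), and this is exactly the step you defer as ``a short check'' / ``bookkeeping''. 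The plan is right and the signs do work out, but as written this is a promissory note for the very content the paper outsources to \cite{bt2022}.

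Two concrete points need completion. First, your argument that $e_i \bilin e_j = 0$ for $i+j\neq 4n$ uses only the isotropy of $F$ and covers only the case $j < 4n-i$, i.e.\ $i+j<4n$; since the condition $i+j>4n$ is symmetric in $i$ and $j$, it does not reduce to that case. For $i+j>4n$ you must use the other flag: $e_i\in \ell_i\subset G^{(4n-i)}$ and $e_j\in G^{(4n-j)}$, and the isotropy condition for $G$ then gives the vanishing. So both flags' isotropy is needed for the off-antidiagonal zeros, not just $F$'s. Second, a small simplification: in the converse direction you do not need to verify conditions (1) and (2) of Definition \ref{def:flag} for $F_E$ and $F_{\widehat E}$, since the proposition already assumes $F,G$ are oriented isotropic flags; only oriented transversality needs checking there, and your reversal-sign computation suffices for that.
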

\begin{proof}
    By \cite{bt2022} Lemma 2.19, there exists a basis satisfying all the required properties except that of being a $J$-basis. The fact that $F,G$ are isotropic guarantees that we can rescale each basis vector by a positive scalar to make it into a $J$-basis, and this does not change the flags $F,G$ or their orientation.
\end{proof}

\begin{rem}
    Note that the notion of an opposite basis $\widehat E$ to a basis $E$ is defined for any basis, not just an isotropic one.
\end{rem}

\begin{defn}
    A triple of oriented flags $(F,G,H)$ is \emph{positive} if for every choice of $0 \le i,j,k\le 4n-1$ such that  $i+j+k = 4n-1$, we have the direct sum
    \[F^{(i)} \oplus G^{(j)} \oplus H^{(k)} = \bR^{4n+1},\]
    and the induced orientation matches with the fixed orientation on $\bR^{4n+1}$. In particular, note that in a positive triple every pair is oriented transverse.
\end{defn}

More generally, we say a $k$-tuple of oriented flags is positive if every ordered sub-triple is positive.

Positivity of triples of oriented flags satisfies the following properties (\cite{bt2022}, Proposition 3.7):
\begin{enumerate}
    \item If $(F,G,H)$ is positive, then $(H,F,G)$ is positive.
    \item If $(F,G,H)$ is positive, then $(H,G,F)$ is not positive.
    \item If $(F,G,H)$ and $(F,H,K)$ are positive, then $(F,G,K)$ is positive.
\end{enumerate}

These are the axioms of a (strict) \emph{partial cyclic order}. We will use the terms \emph{cyclically ordered} and \emph{positive} for tuples of flags interchangeably.

\begin{defn}
    Given two oriented transverse flags $F, G \in \Flag^+(\bR^{4n-1}),$ the \emph{interval} between them is 
    \[\ival{F}{G} = \{ H \in \Flag^+(\bR^{4n-1}) ~|~ (F, H, G) \text{ is positive}\}.\]
    The \emph{opposite} of an interval is defined by $\ival{F}{G}^\opp = \ival{G}{F}$.
\end{defn}

Flag positivity is classically defined in terms of totally positive matrices. A matrix is \emph{totally positive} if all its minors are strictly positive. A matrix is \emph{upper triangular totally positive} (resp. \emph{lower triangular totally positive}) if it is upper triangular (resp. lower triangular) and all of its minors which are not zero by triangularity are strictly positive.
\begin{prop}\label{LowerTriangularSpan}
    A triple of oriented flags $F,G,H$ is positive if and only if there exists an oriented basis $E$ of $\bR^{4n-1}$ and a unipotent lower-triangular totally positive matrix $U$ such that $F = F_E$, $H = F_{\widehat{E}}$
    and $G^{(i)}$ is the span of the first $i$ columns of $U$.
\end{prop}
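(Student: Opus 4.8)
The plan is to prove both directions by diagonalizing the transverse pair $(F,H)$ and reading off $G$ as a column span, thereby converting the orientation clauses in the definition of positivity into sign conditions on minors; this is precisely the passage between our partial-cyclic-order definition of positivity and the classical totally-positive-matrix definition mentioned just above the statement.

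For the forward implication, suppose $(F,G,H)$ is positive. Since $\SL(4n-1,\bR)$ acts transitively on oriented transverse pairs (\cite{bt2022}, Lemma 2.19) and preserves orientation, there is a positively oriented basis $E=(e_1,\dots,e_{4n-1})$ with $F=F_E$ and $H=F_{\widehat{E}}$. Taking the index triple $(0,b,4n-1-b)$ in the definition of positivity gives $G^{(b)}\oplus H^{(4n-1-b)}=\bR^{4n-1}$; since $H^{(4n-1-b)}=\langle e_{b+1},\dots,e_{4n-1}\rangle$, each $G^{(b)}$ projects isomorphically onto $\langle e_1,\dots,e_b\rangle$. Hence if $M$ is any matrix whose first $b$ columns span $G^{(b)}$ for all $b$, every leading principal block of $M$ is invertible, so an $LU$-type factorization gives $M=UB$ with $U$ unipotent lower-triangular and $B$ upper-triangular invertible. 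Right multiplication by an upper-triangular matrix preserves the span of the first $b$ columns for every $b$, so $G^{(b)}$ is the span of the first $b$ columns of $U$; moreover the orientation clauses for the triples $(0,b,4n-1-b)$ force the leading principal minors of $M$ to be positive, so the column orientations of $U$ induce exactly the orientation of $G$.

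It remains to identify positivity of $(F,G,H)$ with total positivity of $U$. Fix $i+j+k=4n-1$ with $i,j,k\ge 0$ and form the $(4n-1)\times(4n-1)$ matrix whose columns are, in order, $e_1,\dots,e_i$, then the first $j$ columns of $U$, then $\widehat{E}_1,\dots,\widehat{E}_k$. Its first $i$ columns are supported on rows $1,\dots,i$ and its last $k$ columns on rows $i+j+1,\dots,4n-1$, each of these blocks being the identity up to the signs $(-1)^{m+1}$ built into $\widehat{E}$, so its determinant equals $\pm$ the minor of $U$ with rows $\{i+1,\dots,i+j\}$ and columns $\{1,\dots,j\}$. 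A bookkeeping check shows the interleaving permutation cancels the $\widehat{E}$-signs and the sign is always $+$; hence $F^{(i)}\oplus G^{(j)}\oplus H^{(k)}=\bR^{4n-1}$ with correctly induced orientation if and only if that minor of $U$ is positive. As $(i,j,k)$ ranges over all admissible triples, these are exactly the minors of $U$ with an initial set of columns and a contiguous set of rows, and for a unipotent lower-triangular matrix positivity of this family is a classical criterion guaranteeing that all non-trivially-zero minors are positive, i.e.\ that $U$ is totally positive. Running the same computation in reverse gives the converse: for a totally positive unipotent lower-triangular $U$ with $G^{(i)}$ the span of its first $i$ columns, every direct sum required by positivity is witnessed by a non-trivially-zero, hence positive, minor carrying the correct sign, so $(F_E,F_U,F_{\widehat{E}})$ is positive.

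The genuinely delicate point, which I would isolate as a lemma, is the sign bookkeeping: one must verify once and for all that interleaving the three column blocks and applying the alternating signs in the opposite basis $\widehat{E}$ produces no residual sign, so that ``correctly oriented direct sum'' is literally ``positive minor'', and that the orientation on $G$ matches the one carried by the columns of $U$ rather than its opposite. The only other input to state carefully is the classical equivalence, for unitriangular matrices, between positivity of the minors with initial column set and contiguous row set and positivity of all non-trivial minors; this is part of the standard dictionary with totally positive matrices and can be established by short Plücker relations.
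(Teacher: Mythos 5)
Your overall strategy is sound, and one of the two points you defer is genuinely fine: with the paper's convention $\widehat{E}=(e_{4n-1},-e_{4n-2},\dots,-e_2,e_1)$ one has $\widehat{E}_1\wedge\dots\wedge\widehat{E}_k=e_{4n-k}\wedge\dots\wedge e_{4n-1}$, so the orientation clause for the triple $(i,j,k)$ is literally the positivity of the minor $\Delta_{[i+1,i+j],[1,j]}(U)$ with no residual sign, and the clauses $(0,b,4n-1-b)$ do force positive leading principal minors, so the $LU$-type factorization produces a unipotent lower-triangular representative inducing the correct orientation on $G$. The gap is your final step: the assertion that, for a unipotent lower-triangular matrix, positivity of the minors with \emph{initial} column sets $[1,j]$ and \emph{consecutive} row sets $[i+1,i+j]$ already forces positivity of \emph{all} minors not vanishing by triangularity. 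This is exactly the nontrivial content of the proposition (the definition of a positive triple only ever produces this restricted family of $\tbinom{4n-1}{2}$ minors, one per dimension of the flag variety), and it is needed for the ``only if'' direction; everything else in your argument is routine linear algebra. Calling it ``a classical criterion'' establishable ``by short Plücker relations'' is not a proof: the classical Fekete/Cryer-type criteria for triangular total positivity involve different families of minors, so you must either run an actual induction (e.g.\ via Desnanot--Jacobi: in the $4\times 4$ case $\Delta_{\{2,3,4\},\{1,2,3\}}\,u_{32}=\Delta_{\{3,4\},\{2,3\}}\Delta_{\{2,3\},\{1,2\}}-\Delta_{\{3,4\},\{1,2\}}$, and similar identities propagate positivity, but the general inductive scheme has to be set up and checked) or invoke a precise reference, such as the Fomin--Zelevinsky theory of total positivity tests via chamber minors of a reduced word of $w_0$, or Lusztig's description of the positive part of the flag variety together with uniqueness of the unitriangular representative.

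Note also that the paper itself gives no internal argument here: it cites Lemma 3.2 of \cite{bt2022}, which is precisely the statement you are trying to reprove, and whose proof supplies the missing minor-criterion step. So your proposal is best viewed as a correct reduction of the proposition to that total-positivity criterion, plus a verification of the sign conventions; to stand on its own it needs the criterion proved or cited exactly, since the claim ``positive triple $\Rightarrow$ $U$ totally positive'' would be false if one only recorded the restricted family of minors without that sufficiency result.
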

\begin{proof}
    This is proven in Lemma 3.2 of \cite{bt2022}.
\end{proof}

\begin{lem}\label{lem:cyclic_triple_intervals}
    Let $C$ by a positive $k$-tuple of oriented flags, with $F,G\in C$ an adjacent pair. Let $X\in \ival{F}{G}$. Then, the $(k+1)$-tuple $\widehat{C}$ consisting of $C$ with the flag $X$ inserted between $F$ and $G$ is positive.
\end{lem}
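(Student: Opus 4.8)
The plan is to prove that $\widehat{C}$ is positive straight from the definition --- that is, to check that every ordered sub-triple of $\widehat{C}$ is positive --- using nothing beyond the three partial cyclic order properties recalled above. In particular I will not need the $J$-basis normal form or total positivity here; the abstract cyclic-order axioms suffice.

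First I would normalize the situation. Since a cyclic rotation of a positive tuple is again positive (property (1)), after relabeling we may assume $C = (C_1, \dots, C_k)$ with $C_1 = F$ and $C_2 = G$ the consecutive pair, so that $\widehat{C} = (C_1, X, C_2, C_3, \dots, C_k)$ and the hypothesis $X \in \ival{F}{G}$ reads: $(C_1, X, C_2)$ is positive. Any ordered sub-triple of $\widehat{C}$ not containing $X$ is an ordered sub-triple of $C$, hence positive. Listing the sub-triples that do contain $X$ and reading them in the cyclic order of $\widehat{C}$, one sees that they come in exactly two shapes: $(C_1, X, C_j)$ for $2 \le j \le k$, and $(X, C_i, C_j)$ for $2 \le i < j \le k$.

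Next I would handle the first shape. The case $j = 2$ is the hypothesis; for $j \ge 3$, apply property (3) to the two positive triples $(C_1, X, C_2)$ and $(C_1, C_2, C_j)$ (the latter a sub-triple of $C$, since $1 < 2 < j$), which gives that $(C_1, X, C_j)$ is positive. For the second shape, fix $2 \le i < j \le k$. The triple $(C_1, C_i, C_j)$ is a sub-triple of $C$, hence positive, so by property (1) so is $(C_i, C_j, C_1)$; and $(C_1, X, C_i)$ is positive by the first shape, so by property (1) so is $(C_i, C_1, X)$. Applying property (3) to $(C_i, C_j, C_1)$ and $(C_i, C_1, X)$ yields that $(C_i, C_j, X)$ is positive, and one more application of property (1) gives that $(X, C_i, C_j)$ is positive. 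This exhausts all sub-triples of $\widehat{C}$.

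The only genuinely delicate point --- the main obstacle --- is the second shape: one cannot produce the triples $(X, C_i, C_j)$ by applying property (3) with $X$ as the pivot, because the natural intermediate triple $(X, C_1, C_j)$ is in fact \emph{not} positive (it is a cyclic rotation of the reverse of the positive triple $(C_1, X, C_j)$, so property (2) forbids it). The correct move is to pivot property (3) at $C_i$ and use $C_1 = F$ as the intermediate flag, then read off the conclusion up to a cyclic rotation. A second, purely bookkeeping, point to state carefully is the reduction to "$G$ immediately follows $F$ in $C$": this is legitimate because $\ival{F}{G}$ is an \emph{ordered} interval, and it is exactly what makes "inserting $X$ between $F$ and $G$" unambiguous.
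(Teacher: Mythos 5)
Your proof is correct and follows essentially the same route as the paper's: both arguments use only the cyclic-order axioms, first establishing $(F,X,H)$ for each other flag $H$ by transitivity pivoted at $F$, then pivoting at $H$ (your $C_i$) against a sub-triple of $C$ to get the remaining triples containing $X$. Your version just makes the enumeration of sub-triple shapes and the normalization more explicit than the paper's ``let $H,K\in C$ be distinct'' formulation.
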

\begin{proof}
    By cyclic invariance, we may assume that $G$ is the first flag of the tuple and $F$ is the last, and $X$ is inserted at the end.

    Let $H,K \in C$ be distinct and assume without loss of generality that $(G,H,K)$ and $(H,K,F)$ are positive (by switching $H$ and $K$ if needed). We must show that $(H,K,X)$ is positive.
    
    Since $(F,X,G)$ and $(F,G,H)$ are positive, we get that $(F,X,H)$ is positive. Then, since $(H,K,F)$ is positive, we get $(H,K,X)$ is positive as desired. 
\end{proof}

The following property of the cyclic order on oriented flags, which is called \emph{properness} in \cite{bt2022}, will be useful.

\begin{lem}[\cite{bt2022} Proposition 3.13]\label{lem:intervals_proper}
    If $(F,G,H,K)$ is a positive quadruple, then $\overline{\ival{G}{H}} \subset \ival{F}{K}$.
\end{lem}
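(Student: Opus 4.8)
The plan is to reduce the statement to the case of full oriented flags in $\bR^{4n-1}$, which is essentially the content of \cite{bt2022} Proposition 3.13, and to explain the two ingredients that make that reduction work: the totally-positive matrix description of positive triples from Proposition \ref{LowerTriangularSpan}, and the transitivity/compatibility axioms of the partial cyclic order. First I would fix a positively oriented basis $E$ so that, by Proposition \ref{LowerTriangularSpan}, $F = F_E$ and $K = F_{\widehat E}$, and both $G$ and $H$ are realized as the column-span flags of unipotent lower-triangular totally positive matrices $U_G$ and $U_H$. Positivity of the quadruple $(F,G,H,K)$ forces, via axiom (3) of the partial cyclic order together with Proposition \ref{LowerTriangularSpan} applied to the triple $(F,G,H)$, that $U_H = U_G\, L$ for some unipotent lower-triangular totally positive $L$ (i.e. $G$ precedes $H$ in the cell of totally positive matrices); this is the concrete meaning of "$G$ before $H$ inside $\ival{F}{K}$".

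Next I would unwind what $X \in \overline{\ival{G}{H}}$ means in these coordinates: it says $(G,X,H)$ is positive or degenerate, which by Proposition \ref{LowerTriangularSpan} (and a limiting argument for the closure) means $X$ is the column-span flag of $U_G\, M$ where $M$ is unipotent lower-triangular and totally \emph{nonnegative} with the appropriate minors nonzero, and moreover $M$ lies "between" the identity and $L$ in the totally positive cell. The key point is then purely a statement about totally positive/nonnegative lower-triangular matrices: if $M$ sits between $I$ and $L$ in this partial order, then $U_G M$ still represents a flag strictly positive with respect to both $F = F_E$ and $K = F_{\widehat E}$ — that is, $(F, U_G M, K)$ is a positive triple, so $X \in \ival{F}{K}$. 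Concretely, the relevant minors of the matrix whose columns are a positively oriented $J$-basis adapted to $F$, followed by the columns of $U_G M$, are polynomials with positive coefficients in the entries of the totally nonnegative factors, hence remain strictly positive; and the transversality of $X$ with $K$ follows because $U_G M$ is still unipotent lower-triangular (nonzero leading minors), while the orientation bookkeeping is inherited from that of the strictly positive endpoints.

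The step I expect to be the main obstacle is the closure part: passing from the open statement "$(F,G,H,K)$ positive implies $\ival{G}{H} \subset \ival{F}{K}$", which is a direct consequence of transitivity axiom (3) applied twice (first to $(F,G,X)$ and $(F,X,H)$ hmm — rather to get $(F,X,H)$ from $(F,G,H)$ and $(G,X,H)$, then $(F,X,K)$ from $(F,X,H)$ and $(F,H,K)$, mimicking Lemma \ref{lem:cyclic_triple_intervals}), to the claim about the \emph{closed} interval $\overline{\ival{G}{H}}$. A point $X$ in the closure is a limit of $X_m \in \ival{G}{H}$, so $(F,X,K)$ is a limit of positive triples and hence is positive or degenerate; the real work is ruling out degeneracy, i.e. showing $X$ is still oriented-transverse to both $F$ and $K$. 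This is where one genuinely uses the strict separation coming from $F \neq G$ and $H \neq K$ in the quadruple: in the totally-positive coordinates, $M$ is bounded away from the degenerate boundary faces that would destroy transversality with $F$ or $K$, because $L$ itself is strictly totally positive and $M \preceq L$. I would make this precise by choosing, for the endpoints, slightly "larger" flags $F' \in \ival{F}{G}$ and $K' \in \ival{H}{K}$ with $(F, F', G, H, K', K)$ positive — possible by Lemma \ref{lem:cyclic_triple_intervals} — so that $\overline{\ival{G}{H}} \subset \ival{F'}{K'} \subset \ival{F}{K}$, with the first inclusion now following from the \emph{open} statement applied to the quintuple and a routine compactness/continuity argument, and the second from monotonicity of intervals under the cyclic order. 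This bypasses any delicate boundary analysis and reduces everything to the open case plus the axioms.
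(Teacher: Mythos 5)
First, note that the paper does not prove this lemma at all: it is quoted directly as Proposition 3.13 of \cite{bt2022}, so your proposal has to be judged on its own merits rather than against an in-paper argument. Your core strategy (work in a basis adapted to $(F,K)$, parametrize the interval by unipotent lower-triangular totally positive matrices, and use a Cauchy--Binet/semigroup argument to show that multiplying the strictly positive factor by a totally nonnegative limit keeps all admissible minors strictly positive) is the right one and is close in spirit to the total-positivity proof in \cite{bt2022}. But as written there are genuine gaps. (i) The factorization $U_H = U_G L$ with $L$ unipotent lower-triangular totally positive does not follow, as you claim, from axiom (3) plus Proposition \ref{LowerTriangularSpan} applied to $(F,G,H)$: that proposition produces a basis adapted to the pair $(F,H)$, not to $(F,K)$, so it says nothing about the matrices $U_G, U_H$ in the $(F,K)$-adapted coordinates. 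The correct (and short) route is different: a unipotent lower-triangular matrix stabilizes the oriented flag $F_{\widehat{E}}$, so $\ival{G}{K} = U_G\,\ival{F}{K}$, and then bijectivity of the big-cell parametrization together with Proposition \ref{LowerTriangularSpan} and Lemma \ref{lem:cyclic_triple_intervals} gives $U_G^{-1}U_H \in U^{>0}$ and, for $X\in\ival{G}{H}$, $X=\mathrm{flag}(U_G M)$ with $M, M^{-1}L$ totally positive. (ii) The closure step is exactly where the difficulty lives, and ``$M$ lies between $I$ and $L$'' is not yet an argument: for a sequence $X_m\in\ival{G}{H}$ you must show the parameter matrices $M_m$ stay in a compact set, e.g.\ via $L = M_m (M_m^{-1}L)$ and nonnegativity of all entries, which forces $0 \le (M_m)_{ij} \le L_{ij}$; only then can you pass to a limit $M$ and run Cauchy--Binet ($\det(U_GM)_{R,C} \ge \det(U_G)_{R,C}\det(M)_{C,C} = \det(U_G)_{R,C} > 0$). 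Without that boundedness, nothing rules out degeneration, which is precisely what ``properness'' asserts.

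(iii) The final ``bypass'' paragraph does not work and is circular. From the open statement applied to the quintuple you only get $\ival{G}{H} \subset \ival{F'}{K'}$, hence $\overline{\ival{G}{H}} \subset \overline{\ival{F'}{K'}}$; no routine compactness or continuity argument upgrades a closure to an open interval, since limits of positive triples are a priori only degenerate-positive --- that upgrade is the entire content of the lemma. And the inclusion you would then need, $\overline{\ival{F'}{K'}} \subset \ival{F}{K}$, is literally the statement being proved, applied to the quadruple $(F,F',K',K)$. So the purported reduction to ``the open case plus the axioms'' cannot succeed: the axioms of the partial cyclic order alone do not imply properness, and some total-positivity input (your paragraphs one and two, completed as above, or the argument of \cite{bt2022}) is unavoidable.
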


\begin{prop}\label{prop:quadruplepositive}
    A quadruple of oriented flags $(F,G,H,K)$ is positive if and only if there exists an oriented basis $e_1,\dots,e_{4n-1}$ of $\bR^{4n-1}$ such that $F,G,H$ are as Proposition \ref{LowerTriangularSpan}, and $K$ is represented by $SLS$ where $L$ is lower triangular totally positive and $S = \diag(1,-1,1,-1,\dots,-1,1)$.
\end{prop}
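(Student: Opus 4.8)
The plan is to reduce the positivity of $(F,G,H,K)$ to a single condition on $K$, put $(F,G,H)$ in normal form via Proposition~\ref{LowerTriangularSpan}, and then describe the relevant interval in that normal form. First I would show, using only the partial cyclic order axioms, that $(F,G,H,K)$ is positive if and only if $(F,G,H)$ is positive and $K\in\ival{H}{F}$ (equivalently $(H,K,F)$ is positive). One implication is immediate because both $(F,G,H)$ and $(H,K,F)$ are subtriples of $(F,G,H,K)$. For the converse, from $(F,G,H)$ and $(H,K,F)$ positive one first gets $(F,H,K)$ and $(H,F,G)$ positive by cyclic invariance; then axiom~(3) with first flag $F$ yields $(F,G,K)$, and axiom~(3) with first flag $H$ applied to $(H,K,F)$ and $(H,F,G)$ yields $(H,K,G)$, hence $(G,H,K)$. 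So all four subtriples are positive, as required.

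Next, by Proposition~\ref{LowerTriangularSpan} I would choose coordinates so that $F=F_E$ is the standard oriented flag, $H=F_{\widehat{E}}$, and $G^{(i)}$ is the span of the first $i$ columns of a lower-triangular totally positive matrix; this is exactly the asserted normal form for $F,G,H$, so by the reduction it only remains to describe $\ival{H}{F}$ in this basis. The key observation is that the ``opposite basis'' involution is realized inside $\SL(4n-1,\bR)$: writing $w_0$ for the antidiagonal permutation matrix, the matrix $w_0S$ (with $S=\diag(1,-1,\dots,-1,1)$ as in the statement) sends the ordered basis $E$ to $\widehat{E}$, hence exchanges the oriented flags $F_E$ and $F_{\widehat{E}}$; it is an involution, and $\det(w_0S)=(\det w_0)(\det S)=(-1)(-1)=1$, so its action on $\Flag^+(\bR^{4n-1})$ preserves positivity of triples. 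Therefore $(w_0S)\cdot\ival{F_E}{F_{\widehat{E}}}=\ival{F_{\widehat{E}}}{F_E}=\ival{H}{F}$, and by Proposition~\ref{LowerTriangularSpan} the interval $\ival{F_E}{F_{\widehat{E}}}$ is precisely the set of flags whose $i$-th space is the span of the first $i$ columns of a lower-triangular totally positive matrix (any basis realizing $F_E$ and $F_{\widehat E}$ differs from $E$ only by a positive diagonal, which can be absorbed into the triangular matrix). Hence $K\in\ival{H}{F}$ if and only if $K^{(i)}$ is the span of the first $i$ columns of $(w_0S)L$ for some lower-triangular totally positive $L$.

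It then remains to convert the representative $(w_0S)L$ into the form $SL'S$. Since $w_0S=Sw_0$ and since right multiplication by an invertible upper-triangular matrix does not change the flag of initial column spans, this reduces to showing that every lower-triangular totally positive $L$ admits a decomposition $w_0L=\Lambda B$ with $B$ upper-unitriangular and $\Lambda$ lower-triangular such that $L':=\Lambda S$ is again lower-triangular totally positive (and conversely, starting from $L'$). Such a decomposition exists because all leading principal minors of $w_0L$ are nonzero: the $k\times k$ leading minor of $w_0L$ equals, up to the sign $(-1)^{\lfloor k/2\rfloor}$, the minor of $L$ on its last $k$ rows and first $k$ columns, which is not forced to vanish by lower-triangularity and is therefore strictly positive. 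The diagonal of $\Lambda$ then has sign pattern $(+,-,+,\dots)$, matching that of $S$, so $\Lambda S$ has positive diagonal; that $\Lambda S$ is totally positive requires in addition checking that its other non-degenerate minors are positive, which I would do by relating the minors of $\Lambda$ to the (positive) non-degenerate minors of $L$. Since $S(\Lambda S)S\cdot B=S\Lambda B=Sw_0L=w_0SL$, the flag represented by $(w_0S)L$ is the flag represented by $SL'S$, and combining this with the normal form from Proposition~\ref{LowerTriangularSpan} gives the basis asserted in the statement; the converse implication follows by running the argument backwards.

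I expect this last step to be the main obstacle. It is a total-positivity analogue of the classical fact that the triangular factors of a totally positive matrix are again totally positive, but it is applied here to $w_0L$, which is very far from totally positive, so one must keep careful track both of which minors are allowed to be nonzero and of the alternating signs contributed by $w_0$ and by $S$. A secondary but genuine point throughout is to match the orientations on the graded pieces of the flags on the two sides; this is precisely what forces the normalization $SLS$ rather than a cruder representative such as $w_0L$.
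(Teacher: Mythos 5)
Your overall architecture is sound, and most of it checks out: the reduction of quadruple positivity to ``$(F,G,H)$ positive and $K\in\ival{H}{F}$'' via the cyclic-order axioms is correct (it is essentially Lemma~\ref{lem:cyclic_triple_intervals}); the matrix $w_0S$ does swap $F_E$ and $F_{\widehat E}$, has determinant $1$ and hence preserves positivity of triples; $Sw_0=w_0S$; the leading principal $k\times k$ minors of $w_0L$ are $(-1)^{\lfloor k/2\rfloor}$ times non-degenerate (hence positive) minors of $L$, so the factorization $w_0L=\Lambda B$ exists, $\Lambda$ has diagonal signs $(+,-,+,\dots)$, and $\Lambda S$ has positive diagonal; and absorbing the unitriangular factor $B$ on the right does not change the oriented column flag, so the flag of $(w_0S)L$ is indeed that of $S(\Lambda S)S$. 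The genuine gap is exactly the step you flag as the main obstacle and then leave as an intention: you never prove that $L'=\Lambda S$ is lower-triangular \emph{totally positive} (nor the converse direction, which needs the same fact with the roles of $L$ and $L'$ exchanged). What you establish is only positivity of the diagonal, which is far from total positivity, and this missing claim is not a side verification --- it is the entire content of the proposition beyond Proposition~\ref{LowerTriangularSpan}, and it is precisely what the paper outsources to Lemma~3.10 of \cite{bt2022} (the paper's proof consists of that citation together with Proposition~\ref{LowerTriangularSpan}). As written, the proposal is a plan whose decisive assertion is asserted rather than proved.

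If you want to complete it along your lines, the minor bookkeeping does work out further than you carried it: since $B$ is upper unitriangular, for any row set $I$ of size $k$ one has $\Lambda_{I,\{1,\dots,k\}}=(w_0L)_{I,\{1,\dots,k\}}=(-1)^{\lfloor k/2\rfloor}L_{w_0(I),\{1,\dots,k\}}$, these minors of $L$ are never forced to vanish by triangularity, and the column rescaling by $S$ contributes another $(-1)^{\lfloor k/2\rfloor}$, so every initial-column minor of $\Lambda S$ equals a non-degenerate minor of $L$ and is positive. But passing from positivity of initial-column minors to positivity of \emph{all} non-degenerate minors of a triangular matrix requires a genuine input (a Fekete/Cryer/Gasca--Pe\~na type criterion for triangular matrices, or a factorization into elementary positive Jacobi matrices), which you neither prove nor cite; the same issue recurs in your closing claim that the converse ``follows by running the argument backwards.'' Either supply that total-positivity argument or simply invoke Lemma~3.10 of \cite{bt2022}, as the paper does.
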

\begin{proof}
    This follows from Proposition \ref{LowerTriangularSpan} and Lemma 3.10 of \cite{bt2022}.
\end{proof}

\begin{prop}[\cite{bt2022} Lemma 3.18]\label{prop:quadruplepositivemap}
    Suppose $(F_E, G, H, F_{\widehat{E}})$ is a positive quadruple of oriented  flags. Then, there exists a totally positive matrix $M \in \SL(4n-1)$  such that $M F_E = G$, $M F_{\widehat{E}} =  H$. If $F_E, G, H, F_{\widehat E}$ are all isotropic, we can choose $M$ totally positive in $\SO(2n,2n-1).$
\end{prop}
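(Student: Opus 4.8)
The plan is to prove the $\SL(4n-1)$ version first, producing $M$ as a product of totally positive factors, and then to upgrade it to $\SO(2n,2n-1)$ in the isotropic case by a soft transitivity argument.

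\textbf{The $\SL(4n-1)$ statement.} Apply Proposition~\ref{LowerTriangularSpan} to the positive triple $(F_E,G,F_{\widehat E})$, a sub-triple of the given positive quadruple. After rescaling the basis $E$ by a positive diagonal matrix — which changes neither the flags $F_E,F_{\widehat E}$ nor the notion of total positivity — we may assume that $G$ is the column flag of a unipotent lower-triangular totally positive matrix $U$. Then $UF_E=G$, while $U$ fixes the oriented flag $F_{\widehat E}$ because a unipotent lower-triangular matrix stabilizes the anti-standard oriented flag. It therefore suffices to produce an \emph{upper}-triangular totally positive matrix $B$ with $BF_{\widehat E}=U^{-1}H$: with such a $B$, set $M:=UB$, so that $MF_E=U(BF_E)=UF_E=G$ (as $B$ is upper-triangular), $MF_{\widehat E}=U(BF_{\widehat E})=H$, and $M$ is totally positive, being a product of a unipotent lower-triangular totally positive matrix and an upper-triangular totally positive one — such products are precisely the totally positive matrices.

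\textbf{Producing $B$ is the main obstacle.} Two facts are needed. First, that $U^{-1}H$ lies in the interval $\ival{F_E}{F_{\widehat E}}$: since $U$ is totally positive it preserves positivity of flag tuples, hence maps $\ival{F_E}{F_{\widehat E}}$ into $\ival{UF_E}{UF_{\widehat E}}=\ival{G}{F_{\widehat E}}$; as $H\in\ival{G}{F_{\widehat E}}$ by the quadruple hypothesis, one wants this map to be \emph{onto}, so that the preimage $U^{-1}H$ lands in $\ival{F_E}{F_{\widehat E}}$ — surjectivity here is where Lemma~\ref{lem:intervals_proper}, controlling the behaviour at the endpoints, enters. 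Second, that $\ival{F_E}{F_{\widehat E}}$ coincides (up to replacing it by its opposite, a matter of orientation conventions) with the orbit of $F_{\widehat E}$ under the upper-triangular totally positive matrices: this is the opposite-Borel counterpart of Proposition~\ref{LowerTriangularSpan} — an interval is parametrized by a totally positive unipotent factor — and follows from it via the opposite-basis involution $E\mapsto\widehat E$. Verifying these two facts, equivalently checking that the upper-triangular factor one is forced to solve for has all of its minors strictly positive, is the combinatorial heart of the argument; in the smallest case $4n-1=3$ it reduces to an explicit computation with $3\times3$ minors.

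\textbf{The $\SO(2n,2n-1)$ refinement.} Suppose now that $F_E,G,H,F_{\widehat E}$ are all isotropic. Since $\SO(2n,2n-1)$ acts transitively on oriented transverse isotropic pairs (the remark preceding Proposition~\ref{prop:JBasis}), choose $P\in\SO(2n,2n-1)$ with $PF_E=G$ and $PF_{\widehat E}=H$. Then $M^{-1}P$ fixes both oriented flags $F_E$ and $F_{\widehat E}$, so it lies in the common stabilizer of this oriented transverse pair; in the basis $E$, where $F_E$ and $F_{\widehat E}$ are the standard oriented flag and its opposite, that stabilizer is the group of diagonal matrices with positive diagonal entries. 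Right multiplication by such a matrix scales every minor by a positive factor and hence preserves total positivity, so $P=M\cdot(M^{-1}P)$ is totally positive. Thus $P$ itself is the desired totally positive element of $\SO(2n,2n-1)$, which completes the proof.
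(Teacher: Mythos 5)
The central gap is in your $\SL(4n-1)$ step. Reducing the problem to finding an upper-triangular totally positive $B$ with $BF_{\widehat E}=U^{-1}H$ is fine, but the fact you then need --- that every flag of $\ival{F_E}{F_{\widehat E}}$ is of the form $BF_{\widehat E}$ with $B$ upper-triangular totally positive --- is precisely the nontrivial content, and your claimed derivation of it fails. The opposite-basis involution $E\mapsto\widehat E$ does \emph{not} turn Proposition \ref{LowerTriangularSpan} into that statement: a matrix which is unipotent lower-triangular totally positive in the basis $\widehat E$ becomes, in the basis $E$, upper triangular with alternating signs off the diagonal (this is exactly the $SLS$ phenomenon recorded in Proposition \ref{prop:quadruplepositive}), and such matrices parametrize the \emph{opposite} interval $\ival{F_{\widehat E}}{F_E}$, not $\ival{F_E}{F_{\widehat E}}$. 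What you actually need is the oriented version of Lusztig's coincidence of the two parametrizations of the positive part of the flag manifold, $U^-_{>0}\cdot F_E = U^+_{>0}\cdot F_{\widehat E}$; that is genuinely nontrivial, it is in substance the cited Lemma 3.18 of \cite{bt2022} (which the present paper does not reprove but simply quotes), and you explicitly leave it unverified as ``the combinatorial heart.'' So the $\SL$ half of your proposal is a reduction to the theorem being proved, not a proof. A smaller blemish: your fact (a) is both misargued and overcomplicated --- $U$ is not a totally positive matrix (being triangular it has vanishing minors), and no surjectivity argument or appeal to Lemma \ref{lem:intervals_proper} is needed, since positivity of tuples of oriented flags is invariant under all of $\SL(4n-1,\bR)$ (the paper uses this invariance freely), so $U^{-1}$ carries the positive triple $(G,H,F_{\widehat E})$ to $(F_E,U^{-1}H,F_{\widehat E})$ and $U^{-1}H\in\ival{F_E}{F_{\widehat E}}$ immediately.

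Your $\SO(2n,2n-1)$ refinement, by contrast, is correct and pleasantly soft, conditional on the $\SL$ statement: transitivity of $\SO(2n,2n-1)$ on oriented transverse isotropic pairs (the remark after Proposition \ref{prop:JBasis}) produces $P$, the joint stabilizer of the oriented pair $(F_E,F_{\widehat E})$ is the positive diagonal subgroup, and right multiplication by a positive diagonal matrix rescales all minors by positive numbers, so $P=M(M^{-1}P)$ is totally positive. This avoids exhibiting the element inside the positive subsemigroup of $\SO(2n,2n-1)$ directly. But since the paper itself offers no proof to compare against (it cites \cite{bt2022}), and your write-up leaves the essential $\SL$ input unproven and misattributes it to an involution argument that provably gives the opposite interval instead, the proposal as a whole has a genuine gap.
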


\subsection{Positivity in $\SO(2n,2n-1)$}
The definition of positivity was extended by Lusztig \cite{Lusztig1994} to reductive Lie groups. Since we are interested in orthogonal groups $\SO(2n,2n-1)$, we will show that with the bilinear form $J$ and the right choice of Cartan decomposition, positive matrices in $\SO(2n,2n-1)$ are also positive in $\SL(4n-1,\bR)$ (that is, they are totally positive matrices).

We now recall Lusztig's definition of the the positive subsemigroup of a real split Lie group $G$. We fix a Cartan decomposition of $\mathfrak{g}$, a set of simple roots $\Sigma$ for some choice of positive roots, and a choice of generators $X_\alpha$ for the simple root spaces. Let $B$ be the Borel subgroup corresponding to the subalgebra generated by the Cartan subalgebra and the positive root spaces. Denote by $s_\alpha \in W$ the simple root reflection in the root $\alpha$ and let $s_{\alpha_1} s_{\alpha_2} \dots s_{\alpha_N}$ be a reduced expression for the longest element in $W$.

\begin{defn}
    The \emph{positive subsemigroup} of $B\subset G$ is the image of the map
    \begin{align*}
        (\bR^{>0})^N &\longrightarrow B \\
        (t_1,\dots,t_N) &\longmapsto e^{t_1 X_{\alpha_1}}e^{t_2 X_{\alpha_2}} \dots e^{t_N X_{\alpha_N}}
    \end{align*}
\end{defn}

We are interested in the groups $G = \SL(4n-1,\bR)$ and $G' = \SO(2n,2n-1)$. Consider the Cartan subgalgebra $\mathfrak{a}\subset \mathfrak{sl}(4n-1,\bR)$ consisting of diagonal matrices. We choose the Cartan subalgebra $\mathfrak{a}' \subset \mathfrak{so}(2n,2n-1)$ which also consists of diagonal matrices, so that $\mathfrak{a}' = \mathfrak{a}\cap \mathfrak{so}(2n,2n-1)$.

The collection of roots $\alpha_i \in \mathfrak{a}^*$ given by $\alpha_i(\diag(a_1,\dots,a_{4n-1})) = a_i - a_{i+1}$ for $i=1,\dots,4n-2$ forms a set of simple roots for $\mathfrak{sl}(4n-1)$. Similarly, $\alpha_i' = a_i - a_{i+1}$ for $1\le i \le 2n-1$ is a set of simple roots for $\mathfrak{so}(2n,2n-1)$.

Denoting by $E_{i,j}$ a $(4n-1)\times(4n-1)$ matrix with a $1$ in position $(i,j)$ and all other coefficients $0$, the root space associated to the simple root $\alpha_i$ is spanned by $E_{i,i+1}$. The root space for $\alpha'_i$ is spanned by $E_{i,i+1} + E_{4n-1-i,4n-i}$.

Denote by $s_i$ the simple root reflection in the root $\alpha_i$ and by $s'_i$ the simple root reflection in $s'_i$. The longest word in the Weyl group of $\mathfrak{so}(2n,2n-1)$ can be written
\[w'_0 = ((s'_1 s'_3 \dots s'_{2n-1})(s'_2 s'_4 \dots s'_{2n-2}))^{2n-1}\]
and for $\mathfrak{sl}(4n-1,\bR)$
\[w_0 = (s_1 s_{4n-2} s_3 s_{4n-4} \dots s_{2n-1} s_{2n} s_{2n-1}s_2 s_{4n-3} s_4 s_{4n-5} \dots s_{2n-2} s_{2n})^{2n-1}.\]
Note that $w_0$ can be obtained from $w_0'$ by the replacing each instance of $s_i'$ by $s_i s_{4n-1-i}$ if $i<2n-1$ and $s_{2n-1}'$ by $s_{2n-1}s_{2n}s_{2n-1}$.

Now, since
\[e^{t(E_{i,i+1} + E_{4n-1-i,4n-i})} = e^{tE_{i,i+1}}e^{tE_{4n-1-i,4n-i}}\] for $i<2n-1$ and
\[e^{t(E_{2n-1,2n} + E_{2n,2n+1})} = e^{\frac t2 E_{2n-1,2n}}e^{t E_{2n,2n+1}}e^{\frac{t}{2}E_{2n-1,2n}},\]
the positive subsemigroup for $\SO(2n,2n-1)$ is contained in the positive subsemigroup for $\SL(4n-1,\bR)$. In particular, it consists of upper triangular totally positive matrices.

\subsection{Positive representations of free groups}\label{sec:positive_reps}
Let $\Gamma \subset \PSL(2,\bR)$ be a free, convex cocompact subgroup. Denote by $\Lambda_\Gamma \subset \partial_\infty \bH^2$ the limit set of $\Gamma$. It is an embedding of the boundary $\partial_\infty \Gamma$ of the group. The boundary $\partial_\infty \bH^2$ is a circle; orient it counter-clockwise, inducing an orientation on $\Lambda_\Gamma$ (and thus on $\partial_\infty\Gamma$). Topologically, the limit set is the Cantor set.

\begin{defn}
    A representation $\rho: \Gamma \rightarrow \SO(2n,2n-1)$ is \emph{positive Anosov} if there exists a $\rho$-equivariant map $\xi: \Lambda_\Gamma \to \Flag^+(\bR^{2n,2n-1})$ which maps positively oriented triples to positive triples. 
\end{defn}

\begin{rem}
    Note that the transversality condition of positivity forces $\xi$ to be injective, and that a positive Anosov representation is Borel Anosov. Our definition of positivity therefore differs from some authors who allow the image of the positive map $\xi$ to be a topological circle (which allows generalized cusps). From here on out, we will use the term \emph{positive} to refer to positive Anosov representations.
\end{rem}

On the surface $S =\bH^2 / \Gamma,$ there exist pairwise disjoint properly embedded geodesic arcs $a_1, \ldots, a_N$ that cut $S$ into a topological disk. Moreover, we can choose a pair of lifts $a_i{}_+, a_i{}_-$ to $\bH^2$ of each arc $a_i$ so that they bound a fundamental domain for $\Gamma$ in $\bH^2.$ We can choose elements $\gamma_i \in \Gamma$ such that $\gamma_i(a_i{}_-) = a_i{}_+,$ and $\gamma_1, \ldots, \gamma_N$ generate $\Gamma.$ We can mimic this construction the space of oriented isotropic flags:

\begin{prop}\label{prop:intervals}
    If $\rho: \Gamma \to \SO(2n,2n-1)$ is a positive representation, then there exists intervals $I_1^\pm \dots I_N^\pm \subset \Flag^+(\bR^{4n-1})$ with all endpoints isotropic flags and satisfying:
    
    \begin{enumerate}
        \item If $J_1,J_2 \in \{I_1^-,\dots,I_N^-,I_1,^+,\dots,I_N^+\}$ are distinct, $\overline{J_1} \subset J_2^\opp$.
        \item $\rho(\gamma_j)(I_j^{-\opp}) = I_j^+$.
    \end{enumerate}
\end{prop}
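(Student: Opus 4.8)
The plan is to transport the geometric picture from $\bH^2$ to $\Flag^+(\bR^{4n-1})$ using the positive boundary map $\xi$ and the partial cyclic order. Each geodesic arc $a_i$, with its chosen lifts $a_{i-}, a_{i+}$, has four endpoints on $\partial_\infty\bH^2 = \Lambda_\Gamma$: write the endpoints of $a_{i-}$ as $p_i^-, q_i^-$ and of $a_{i+}$ as $p_i^+, q_i^+$, labelled so that reading counter-clockwise around the circle one encounters the $4N$ points in the cyclic order dictated by the combinatorics of the ideal polygon cut out by the arcs. Since the lifts $a_{1\pm},\dots,a_{N\pm}$ bound a fundamental domain, these $4N$ endpoints are pairwise distinct and their cyclic order on $\Lambda_\Gamma$ is exactly the one in which each arc's two endpoints are ``adjacent'' in the sense that no other endpoint separates them, and the complementary arcs of the circle between consecutive endpoint-pairs are nested in the standard way. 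Applying $\xi$ sends each point to an oriented isotropic flag, and since $\xi$ sends positively oriented triples to positive triples, the cyclic order of the $4N$ points is preserved. Now define $I_i^- := \ival{\xi(p_i^-)}{\xi(q_i^-)}$ and $I_i^+ := \ival{\xi(p_i^+)}{\xi(q_i^+)}$ (with the labelling chosen so the orientation of the interval matches the counter-clockwise convention); these are intervals between isotropic flags, as required.

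For property (1), I would argue as follows. Take two distinct intervals $J_1 = \ival{A}{B}$ and $J_2 = \ival{C}{D}$ from the list, where $A,B,C,D$ are among the $\xi$-images of the $4N$ endpoints. Because the four corresponding points of $\Lambda_\Gamma$ are the endpoints of two disjoint geodesic segments among lifts bounding a fundamental domain, they occur in the cyclic order $(A, B, C, D)$ or $(A, B, D, C)$ around the circle — i.e. the pair $\{A,B\}$ does not interleave the pair $\{C,D\}$. Since $\xi$ preserves the cyclic order, $(\xi\text{-images in that order})$ is a positive quadruple; without loss of generality $(A,B,C,D)$ is positive. By Lemma~\ref{lem:intervals_proper} (properness of the cyclic order), $\overline{\ival{B}{C}} \subset \ival{A}{D}$, and more to the point $\overline{\ival{A}{B}} \subset \ival{D}{C} = \ival{C}{D}^\opp = J_2^\opp$: indeed applying Lemma~\ref{lem:intervals_proper} to the positive quadruple $(D, A, B, C)$ (a cyclic rotation of $(A,B,C,D)$) gives $\overline{\ival{A}{B}} \subset \ival{D}{C} = J_2^{\opp}$. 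This is exactly $\overline{J_1} \subset J_2^\opp$.

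For property (2), recall $\gamma_i$ was chosen so that $\gamma_i(a_{i-}) = a_{i+}$, hence $\gamma_i$ maps the endpoint pair $\{p_i^-, q_i^-\}$ to $\{p_i^+, q_i^+\}$; tracking the action on the circle and the orientation conventions, $\gamma_i$ sends the ``outside'' of $a_{i-}$ to the ``inside'' of $a_{i+}$, which after applying $\xi$-equivariance of $\rho$ translates to $\rho(\gamma_i)(I_i^{-\opp}) = I_i^+$. Concretely, if the endpoints are labelled so that $I_i^- = \ival{\xi(p_i^-)}{\xi(q_i^-)}$ then $\gamma_i \cdot p_i^- = q_i^+$ and $\gamma_i \cdot q_i^- = p_i^+$ (orientation-reversing on the pair, since $\gamma_i$ pushes across the arc), so $\rho(\gamma_i) \ival{\xi(q_i^-)}{\xi(p_i^-)} = \ival{\xi(q_i^+)}{\xi(p_i^+)}$, i.e. $\rho(\gamma_i)(I_i^{-\opp}) = I_i^+$, using that $\rho(\gamma_i) \circ \xi = \xi \circ \gamma_i$ and that elements of $\SL(4n-1,\bR)$ preserve positivity of triples hence map intervals to intervals.

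The main obstacle I anticipate is bookkeeping: getting the labelling of the $4N$ endpoints and the orientation conventions on the intervals exactly right so that all the cyclic-order statements come out with the correct signs, and verifying carefully that ``disjoint lifts bounding a fundamental domain'' really does force the non-interleaving cyclic-order configuration used in property (1). This is essentially the standard Schottky/ping-pong combinatorics for a free convex cocompact group transported through $\xi$, but one must check that $\xi$ being merely a positive map (rather than a homeomorphism onto a circle) suffices — it does, because all we ever use is that $\xi$ sends positive triples to positive triples and is injective, which is enough to preserve the relevant cyclic orders of finitely many points. No quantitative input is needed here; the content is purely order-theoretic, relying on Lemma~\ref{lem:intervals_proper} and the partial-cyclic-order axioms.
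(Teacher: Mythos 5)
There is a genuine gap at the very first step: you apply $\xi$ directly to the endpoints of the lifted arcs, asserting that these lie in $\Lambda_\Gamma$ (you even write $\partial_\infty\bH^2 = \Lambda_\Gamma$). For a convex cocompact \emph{free} group the limit set is a Cantor set, a proper subset of the circle, and the arcs $a_i$ are properly embedded arcs exiting the convex core through the funnels; consequently the ideal endpoints of their lifts land in the \emph{complementary intervals} (gaps) of $\Lambda_\Gamma$, not in $\Lambda_\Gamma$ itself. So $\xi$ is simply not defined at these points, and the flags $\xi(p_i^\pm),\xi(q_i^\pm)$ you build the intervals from do not exist. This is exactly the point the paper's proof has to handle: each arc endpoint $x_i$ lies in a unique gap $(c_i,d_i)$ with $c_i,d_i\in\Lambda_\Gamma$, one chooses an arbitrary oriented \emph{isotropic} flag $X_i\in\ival{\xi(c_i)}{\xi(d_i)}$ (and similarly $Y_i$), and then Lemma \ref{lem:cyclic_triple_intervals} is needed to insert these chosen flags into the cyclic order of the $\xi$-images of the gap endpoints; your argument never invokes this lemma because the issue is invisible once you pretend $\xi$ applies to the arc endpoints.

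The same gap propagates to your proof of property (2): equivariance of $\xi$ cannot be quoted at points where $\xi$ is undefined, and the flags chosen inside the gaps are arbitrary, so there is no reason the choices at the endpoints of $a_{i+}$ are the $\rho(\gamma_i)$-images of those at $a_{i-}$. The paper sidesteps this by \emph{defining} $I_i^+:=\ival{\rho(\gamma_i)X_i}{\rho(\gamma_i)Y_i}$, so that (2) holds by construction (equivalently, by making the choices equivariantly). Your order-theoretic argument for property (1) — non-interleaving of endpoint pairs plus Lemma \ref{lem:intervals_proper} applied to positive quadruples — is essentially the same as the paper's final step and is fine once the endpoint flags are correctly constructed; but as written the proposal is missing the key idea of replacing the (nonexistent) values of $\xi$ at arc endpoints by chosen isotropic flags in the image gaps.
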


\begin{proof}
    Let $\xi$ be a positive $\rho$-equivariant map.

    Choose properly embedded arcs $a_i{}_\pm \subset \bH^2$ cutting the surface into a disk as above. Denote the endpoints of each geodesic $a_i{}_-$ by $x_i,y_i$ and order them so that the positive intervals $\ival{x_i}{y_i}$ are pairwise disjoint. For each $i$, the point $x_i \in \partial_\infty \bH^2$ lies in a unique complementary interval $(c_i,d_i)$ to the Cantor set $\partial_\infty \Gamma \subset \partial_\infty \bH^2$. Choose any oriented isotropic flag $A_i \in \ival{\xi(c_i)}{\xi(d_i)}$, and choose oriented isotropic flags $Y_i$ using the same process on the points $y_i$.

    Define the intervals $I_i^-:= \ival{X_i}{Y_i}$ and $I_i^+:= \ival{\rho(\gamma_i)X_i}{\rho(\gamma_i)Y_i}$, so that point 2 is satisfied.

    By positivity of the equivariant map $\xi$ and Lemma \ref{lem:cyclic_triple_intervals}, the endpoints of the intervals defined this way are cyclically ordered. This fact, together with Lemma \ref{lem:intervals_proper}, implies point 1.
\end{proof}

\section{Constructing proper cocycles}\label{construction}
In this section, we construct a $\rho(\Gamma)-$cocycle $u: \Gamma \to \bR^{2n,2n-1}$ by explicitly describing $u(\gamma)$ for each $\gamma.$ The construction is inspired by Danciger--Guéritaud--Kassel strip deformations \cite{DGK} (see also \cite{Zager}).

Recall that a properly embedded transversely oriented arc $a$ on a  surface $S= \bH^2/\Gamma$ is an arc with no self-intersections exiting the convex core of $S$ on both sides, and endowed with a notion of a positive and negative direction of crossing it, or equivalently with the notion of ``left" and ``right" along $a$.

Let $\rho: \Gamma\to \SO(2n,2n-1)$ be a positive representation and $\xi$ its associated boundary map. Denote by $\mathcal{A}$ a system of properly embedded pairwise non-homotopic transversely oriented arcs  on $S =  \bH^2/\Gamma$. Let $\Lambda_{\mA} \subset \partial_\infty \bH^2$ be the set of endpoints of the lifts of all the arcs in $\tilde\mA$ to the universal cover $\bH^2.$ The left endpoint of $a \in \tilde\mA$ will be called $a^+$ and the right endpoint $a^-.$ Extend $\xi$ to a positive equivariant map from $\Lambda_\Gamma \cup \Lambda_\mA$ to $\Flag^+(\bR^{2n,2n-1}),$ and call it $\xi$ as well. This is possible by using Proposition \ref{prop:intervals} and extending by equivariance. The action on intervals bounded by $\xi(a^\pm)$ then mimics the action of $\Gamma$ on the intervals determined by transversely oriented arcs in $\partial_\infty\bH^2.$

Choose  positively oriented vectors
 $$\vec v_a^+ \in \xi(a^+)^{(1)}$$ and $$\vec v_a^- \in \xi(a^-)^{(1)}$$
for each $a \in \tilde \mA$, satisfying the equivariance condition $\vec{v}_{\gamma \cdot a}^\pm = \rho(\gamma)\vec{v}^\pm_a$.

Choose a lift $p_0 \in \bH^2 \setminus \tilde \mA$ of the basepoint of $\pi_1(S)\cong\Gamma$, and let $\gamma \in \Gamma.$ Let  $c_\gamma: [0,1] \to \bH^2$ be a path with $c_\gamma(0) = p_0$ and $c_\gamma(1) = \gamma \cdot p_0$ such that $c_\gamma$ intersects each arc in $\tilde\mA$ transversely and does not self-intersect along any arc in $\tilde\mA$. Denote $X_{c_\gamma} = c_\gamma([0,1])\cap \tilde \mA$ and for each each $x \in X_{c_\gamma},$ let $\sigma(x) = 1$ if the path $c_\gamma$ intersects the arc $a_x \in \tilde \mA$ positively at $x$, and $\sigma(x) = -1$ if the intersection is negative. 

\begin{defn}\label{def:cocycle}
    A \emph{generalized infinitesimal strip deformation} of a positive representation $\rho: \Gamma \to \SO(2n,2n-1)$ associated to an arc system $\mA$ 
    is a map $u:\Gamma \to \bR^{2n,2n-1}$ defined by

\begin{equation} \label{CocycleDef}
    u(\gamma) = \sum_{x \in X_{c_\gamma}}\sigma(x)(
    \vec{v}_{a_x}^+ - 
    \vec{v}_{a_x}^-).
\end{equation}
\end{defn}
An illustration of this construction can be seen in Figure \ref{path}. Intuitively, we are adding a vector to $u(\gamma)$ each time $\gamma$ crosses and arc in $\mA.$

\begin{figure}[h!]
 \floatbox[{\capbeside\thisfloatsetup{capbesideposition={right,top},capbesidewidth=0.62\textwidth}}]{figure}[\FBwidth]
  {\caption{Choose $\rho(\Gamma)$-equivariant positively oriented vectors $\vec v_a^\pm \in \xi(a^\pm)^{1},\vec v_b^\pm \in \xi(b^\pm)^{1},\vec v_c^\pm \in \xi(c^\pm)^{1}.$ In the example drawn, the curve $c_\gamma$ crosses the arcs $a$ and $c$ positively and the arc $b$ negatively, so $\sigma(a\cap c_\gamma([0,1])) = 1$ and  $\sigma(c\cap c_\gamma([0,1])) = 1$, while   $\sigma(b\cap c_\gamma([0,1])) = -1.$ We would associate to $\gamma$ the translation vector $u(\gamma) =  \vec v_a^+ -  \vec v_a^- -(\vec v_b^+ - \vec v_b^-) + \vec v_c^+ - \vec v_c^-.$}\label{path}}
{\begin{overpic}[width=0.34\textwidth]{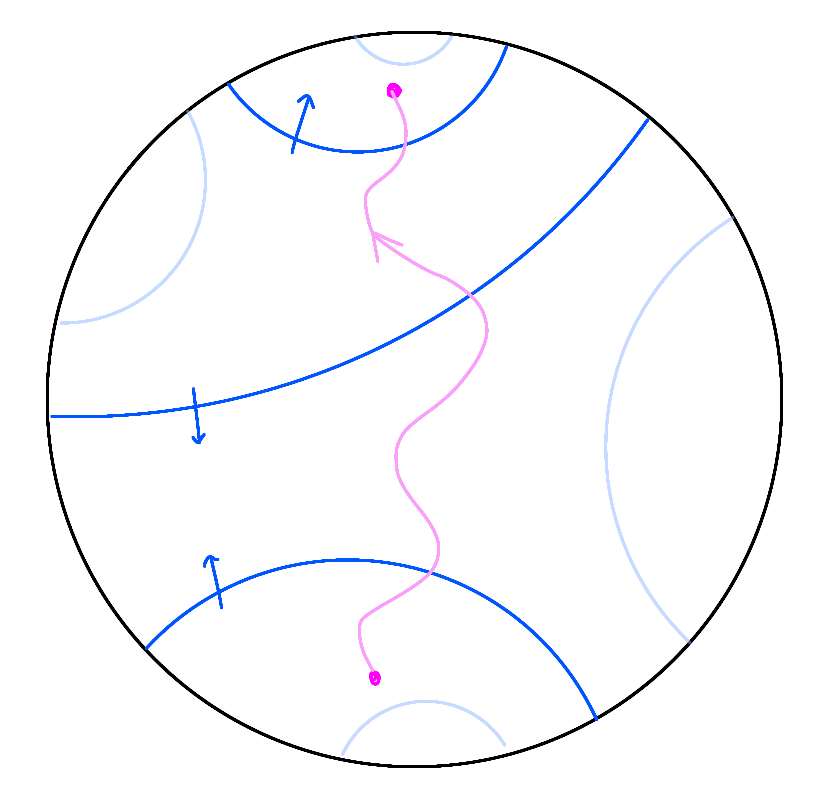}
\put(48,11){\tiny $p_0$}
\put(50, 82){\tiny $\gamma \cdot p_0$}
\put(60,50){\tiny $c_\gamma$}
\put(30,30){\tiny $a$}
\put(30,43){\tiny $b$}
\put(32,76){\tiny $c$}
\put(11,13){\tiny $a^+$}
\put(71,5){\tiny $a^-$}
\put(22,88){\tiny $c^+$}
\put(61,92){\tiny $c^-$}
\put(-2,44){\tiny $b^-$}
\put(82,82){\tiny $b^+$}
\end{overpic}}
\end{figure}

\begin{rem}
    While the set $X_{c_\gamma}$ can depend on the choice of $c_\gamma$, the sum defining $u(\gamma)$ does not.
\end{rem}

\begin{lem}\label{well-defined}A generalized infinitesimal strip deformation is a cocycle.
\end{lem}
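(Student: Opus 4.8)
The plan is to prove the cocycle identity $u(\gamma_1\gamma_2)=u(\gamma_1)+\rho(\gamma_1)u(\gamma_2)$ directly from the path description \eqref{CocycleDef}. First I would reinterpret the right-hand side of \eqref{CocycleDef} as a function of the chosen path rather than of the group element: for any path $c\colon[0,1]\to\bH^2$ with $c(0),c(1)\in\bH^2\setminus\tilde\mA$ that meets $\tilde\mA$ transversely and has no self-intersection on $\tilde\mA$, set
\[
u(c):=\sum_{x\in c([0,1])\cap\tilde\mA}\sigma(x)\bigl(\vec v_{a_x}^+-\vec v_{a_x}^-\bigr),
\]
so that $u(\gamma)=u(c_\gamma)$ for any admissible path $c_\gamma$ from $p_0$ to $\gamma\cdot p_0$. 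Two features of $u(c)$ are immediate. \emph{Additivity:} if $c_2$ starts where $c_1$ ends, at a point off $\tilde\mA$, then after a generic perturbation near the joining point the concatenation $c_1\ast c_2$ is admissible and meets $\tilde\mA$ in the disjoint union of $c_1\cap\tilde\mA$ and $c_2\cap\tilde\mA$, with the same crossing arcs and signs, so $u(c_1\ast c_2)=u(c_1)+u(c_2)$. \emph{Equivariance:} for $\gamma\in\Gamma$, the deck transformation $\gamma$ acts on $\bH^2$ by an orientation-preserving isometry permuting the lifted arcs and preserving their transverse orientations (these are pulled back from $\mA$ on $S$), hence $a_{\gamma\cdot x}=\gamma\cdot a_x$ and $\sigma$ is unchanged; combined with the hypothesis $\vec v_{\gamma\cdot a}^{\pm}=\rho(\gamma)\vec v_a^{\pm}$ this yields $u(\gamma\cdot c)=\rho(\gamma)\,u(c)$.

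Granting the path-independence of $u(c)$ asserted in the remark above (so that $u(c)$ depends only on the endpoints of $c$), the identity follows quickly. Pick admissible paths $c_{\gamma_1}$ from $p_0$ to $\gamma_1\cdot p_0$ and $c_{\gamma_2}$ from $p_0$ to $\gamma_2\cdot p_0$. Since $p_0\notin\tilde\mA$ and $\tilde\mA$ is $\Gamma$-invariant, $\gamma_1\cdot p_0\notin\tilde\mA$, so (after a generic perturbation) $c_{\gamma_1}\ast(\gamma_1\cdot c_{\gamma_2})$ is an admissible path from $p_0$ to $\gamma_1\gamma_2\cdot p_0$ and may be used to compute $u(\gamma_1\gamma_2)$. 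Then
\[
u(\gamma_1\gamma_2)=u\bigl(c_{\gamma_1}\ast(\gamma_1\cdot c_{\gamma_2})\bigr)=u(c_{\gamma_1})+u(\gamma_1\cdot c_{\gamma_2})=u(c_{\gamma_1})+\rho(\gamma_1)u(c_{\gamma_2})=u(\gamma_1)+\rho(\gamma_1)u(\gamma_2),
\]
using path-independence, then additivity, then equivariance.

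The only substantive point is the path-independence of $u(c)$, which I would prove (or, if one prefers, simply cite as the preceding remark) by a standard homotopy argument: any two admissible paths with the same endpoints are homotopic rel endpoints in the simply connected surface $\bH^2$, and a generic homotopy alters the intersection pattern with $\tilde\mA$ only through finitely many elementary moves, each creating or destroying a pair of transverse intersection points on the \emph{same} arc with \emph{opposite} crossing signs; such a pair contributes $(\vec v_a^+-\vec v_a^-)$ and $-(\vec v_a^+-\vec v_a^-)$ to the sum and hence does not change $u(c)$. Local finiteness of $\tilde\mA$ together with compactness of the homotopy ensures only finitely many arcs and moves occur, so this is legitimate. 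I expect this homotopy-invariance to be the only place requiring care; additivity, equivariance, and the fact that $u$ is $\bR^{2n,2n-1}$-valued are formal.
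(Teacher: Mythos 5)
Your proposal is correct and follows essentially the same route as the paper: compute $u(\gamma_1\gamma_2)$ along the concatenation $c_{\gamma_1}\ast(\gamma_1\cdot c_{\gamma_2})$ and use the equivariance $\vec v_{\gamma\cdot a}^{\pm}=\rho(\gamma)\vec v_a^{\pm}$, with path-independence (which the paper records in the remark following Definition \ref{def:cocycle}) justifying the use of this particular path. Your explicit homotopy argument for path-independence is a welcome elaboration of what the paper leaves implicit, but it is not a different method.
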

\proof We need to show that $u(\gamma_1 \gamma_2) = \rho(\gamma_1)u(\gamma_2) + u(\gamma_1).$ 
Let $c_{\gamma_1}$ be a path defining $u(\gamma_1)$ and let $c_{\gamma_2}$ be a path defining $u(\gamma_2).$ Define $$c_{\gamma_1\gamma_2}(t) =    \left\{
\begin{array}{ll}
      c_{\gamma_1}(2t) & t \leq \frac12 \\
      
      \gamma_1 \cdot c_{\gamma_2}(2t-1) & t \ge \frac12 \\
\end{array} 
\right. $$
As $c_{\gamma_1}(1) = \gamma_1 \cdot p_0 = \gamma_1 \cdot c_{\gamma_2}(0),$ the path $c_{\gamma_1 \gamma_2}$ is continuous.  We have $c_{\gamma_1\gamma_2}(0)=p(0)$ and $c_{\gamma_1\gamma_2}(1) = \gamma_1 \cdot (c_{\gamma_2}(1)) = \gamma_1\gamma_2 \cdot p_0,$ so $c_{\gamma_1\gamma_2}$ really does define $u(\gamma_1\gamma_2).$ It follows from equivariance of the choices of  $v_a^\pm$ and  the definition of $u(\gamma_1\gamma_2)$ when using the path $c_{\gamma_1\gamma_2}$ that the cocycle identity holds.

\endproof

\begin{rem}This construction depends on the choice of base tile in which $p_0$ lies. However, different choices of the lift correspond to cohomologous cocycles. The cocycle $u$ only depends on the arcs in $\mA$ up to homotopy.
\end{rem}

\begin{rem}
    In the case when $\rho$ is a Fuchsian representation, that is, when $\rho$ is the restriction to $\Gamma$ of the irreducible representation $\sigma_{4n-1}: \SL_2(\bR) \to \SL_{4n-1}(\bR)$ to $\Gamma$, we can obtain these cocycles as limits of \emph{higher strip deformations} from \cite{Zager} when we send the ``waists" to $\infty.$ For $n=1,$ our construction recovers \emph{infinitesimal strip deformations} from \cite{DGK}.
\end{rem}

\section{The Margulis invariant}\label{margulisinvt}

In this section, we recall the Margulis invariant and compute it for cocycles of the form defined in the previous section.

Call an element $g\in \SO(2n,2n-1)$ \emph{regular} if it is diagonalizable with distinct real eigenvalues. 
A regular element has an attracting isotropic flag $g^+$ and a repelling isotropic flag $g^-$, obtained respectively by taking the span of the first $k$ eigenspaces in decreasing (resp. increasing) order of eigenvalue modulus.

Since real eigenvalues in $\SO(2n,2n-1)$ come in inverse pairs and there is an odd number of them,
regular elements always admit a \emph{neutral eigenvector} of eigenvalue $1$.

Moreover, we can choose a canonical representative on this neutral eigenline. We can orient the attracting flag $g^+$ of $g$, thus also orienting the neutral line. The following proposition proves that the orientation of the neutral line does not depend on the choice of oriented flag $g^+$, and we will call this the positive orientation of the neutral line.

\begin{lem}\label{lem:orientflags}
    Let $G, \, H \in \Flag^+(\bR^{2n,2n-1})$ be two  oriented isotropic flags with the same underlying unoriented flag. Then the orientations on the quotients $H^{(2n)}/H^{(2n-1)}$ and $G^{(2n)}/G^{(2n-1)}$ agree.
\end{lem}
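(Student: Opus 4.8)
The plan is to translate the data of an oriented isotropic flag refining a fixed unoriented isotropic flag into a tuple of signs, and then to show that the sign attached to the middle quotient is completely determined by the underlying flag. Since $G$ and $H$ have the same underlying unoriented flag, denote the common subspaces by $W^{(0)} \subset W^{(1)} \subset \cdots \subset W^{(4n-1)}$ and set $Q_j := W^{(j)}/W^{(j-1)}$ for $j = 1,\dots,4n-1$. Fix once and for all a reference orientation $o_j$ on each $Q_j$. An oriented isotropic flag with this underlying flag is then encoded by a tuple $\epsilon = (\epsilon_1,\dots,\epsilon_{4n-1}) \in \{\pm 1\}^{4n-1}$, where its orientation on $Q_j$ is $\epsilon_j\, o_j$; write $\epsilon^G$, $\epsilon^H$ for the tuples of $G$, $H$. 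The claim $\epsilon^G_{2n} = \epsilon^H_{2n}$ is exactly the assertion of the lemma.

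Next I would record the constraints on such tuples. The orientability condition, that the induced orientation on $W^{(4n-1)} = \bR^{4n-1}$ is the standard one, reads $\prod_{j=1}^{4n-1}\epsilon_j = \delta$ for a sign $\delta \in \{\pm 1\}$ depending only on $W$ and the chosen references. For condition (2) of Definition \ref{def:flag} at an index $i$ with $0 \le i \le 4n-2$ and $i \ne 2n-1$: since $W$ is isotropic we have $(W^{(i)})^\perp = W^{(4n-1-i)}$, so $\bilin$ descends to a perfect pairing $Q_{i+1} \times Q_{4n-1-i} \to \bR$; writing $j = i+1$ this pairs $Q_j$ with $Q_{4n-j}$. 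Evaluating $(-1)^{i+1} u \bilin v > 0$ on reference representatives $v_j \in Q_j$, $u_{4n-j} \in Q_{4n-j}$ shows this condition is equivalent to $\epsilon_j \epsilon_{4n-j} = \eta_j$, where $\eta_j := \mathrm{sign}\bigl((-1)^{j}\, u_{4n-j} \bilin v_j\bigr)$ depends only on $W$ and the references, not on $G$ or $H$. The single remaining instance $i = 2n-1$ pairs $Q_{2n}$ with itself and only asserts that $\bilin$ restricts to a positive-definite form on the line $Q_{2n}$ — a statement about $W$ alone — and imposes no condition on $\epsilon_{2n}$.

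The conclusion is then a short computation. From the orientability condition, $\epsilon^G_{2n} = \delta \cdot \prod_{j \ne 2n} \epsilon^G_j$. Pairing each $j \in \{1,\dots,2n-1\}$ with its partner $4n-j \in \{2n+1,\dots,4n-1\}$ and using $\epsilon^G_{4n-j} = \eta_j \epsilon^G_j$, the product telescopes: $\prod_{j \ne 2n}\epsilon^G_j = \prod_{j=1}^{2n-1}(\epsilon^G_j)^2 \eta_j = \prod_{j=1}^{2n-1}\eta_j$. Hence $\epsilon^G_{2n} = \delta\prod_{j=1}^{2n-1}\eta_j$, and the right-hand side depends only on $W$ and the reference orientations. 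The same formula holds for $H$, so $\epsilon^G_{2n} = \epsilon^H_{2n}$, which is precisely the statement that the orientations of $G$ and $H$ on $Q_{2n} = W^{(2n)}/W^{(2n-1)}$ agree.

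The only genuine subtlety — and the reason the lemma is not immediate — is that condition (2) is self-dual at $i = 2n-1$ and therefore says nothing directly about the orientation of the middle quotient; the orientation of $Q_{2n}$ is pinned down only indirectly, through the fact that flipping any one of the free choices $\epsilon_1,\dots,\epsilon_{2n-1}$ is forced by a perp-pairing to flip a second quotient orientation as well, leaving the total product, hence $\epsilon_{2n}$, unchanged. Verifying the exact form of the relations $\epsilon_{4n-j} = \eta_j^{-1}\epsilon_j$ from Definition \ref{def:flag} is routine linear algebra — one handles one opposite pair at a time using reference vectors adapted to the form — and I expect no obstacle there beyond the bookkeeping of signs.
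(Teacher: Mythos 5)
Your proof is correct and is essentially the paper's argument: the paper also observes that condition (2) of Definition \ref{def:flag} forces sign changes at positions $i$ and $4n-i$ to occur in pairs, so that the overall orientation constraint pins down the middle sign. You phrase it intrinsically via sign tuples relative to reference orientations on the quotients and a telescoping product, whereas the paper runs the same parity count using a positive $J$-basis adapted to $G$ and sign flips $\sigma_i$; the difference is only cosmetic.
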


\proof Let $E = (e_1, e_2, \ldots, e_{4n-1})$ be a positive $J$-basis defining $G,$ that is, $e_i$ is positive in $G^{(i)}/G^{(i-1)}.$ For $i = 1, \ldots, 4n-1,$ let $\sigma_i \in \{+1, -1\}$ and let $(\sigma_1e_1, \sigma_2 e_2, \ldots, \sigma_{4n-1}e_{4n-1})$ be a positive $J$-basis for $H.$ We want to show that $\sigma_{2n} = 1.$

If $\sigma_{2n} = -1$ but $\sigma_i = 1$ for all other $i,$ then $(\sigma_1e_1, \sigma_2 e_2, \ldots, \sigma_{4n-1}e_{4n-1})$ is not a positive basis. 

Suppose $\sigma_i = -1$ for some $i \neq 2n$. Then by Condition 2 of Definition \ref{def:flag}, $\sigma_{4n-i}$ also has to be $-1$. Therefore, an even number of $\sigma_i$ for $i \neq 2n$ are negative. Were $\sigma_{2n}$ also negative, we would have an odd number of negative $\sigma_i,$ in which case the basis
 $(\sigma_1e_1, \sigma_2 e_2, \ldots, \sigma_{4n-1}e_{4n-1})$ would not be a positive basis. This means that $\sigma_{2n}$ has to equal $1.$
\endproof

\begin{defn}
    The \emph{neutral vector} of a positive regular element $g \in \SO(2n,2n-1)$, denoted by $\vec x^0(g),$ is the positively oriented eigenvector of $g$ with eigenvalue $1$, normalized so that $\vec x^0(g) \cdot \vec x^0(g) = 1.$ 
\end{defn}

We can also define a neutral vector from just a pair of isotropic flags:

\begin{defn}\label{def:neutral_vec}
    The \emph{neutral vector} of a pair of transverse isotropic flags $(F,G)$, denoted $\vec{x}^0(F,G)$, is $e_{2n}$ where $(e_1,\dots,e_{4n-1})$ is any positive $J$-basis such that $F=F_E$ and $G=F_{\widehat{E}}$.
\end{defn}

This neutral vector exists by Proposition \ref{prop:JBasis} and is unique by Lemma \ref{lem:orientflags}. The two definitions are related by $\vec{x}^0(g) = \vec{x}^0(g^+,g^-)$.

\begin{defn}
Let $\rho: \Gamma \to \SO(2n,2n-1)$ be a representation with $\rho(\gamma)$ regular for all $\gamma \in \Gamma \setminus \{1\}$ and let $ u: \Gamma \to \bR^{2n,2n-1}$ be a $\rho(\Gamma)$-cocycle. The \emph{Margulis invariant} of $\gamma \in \Gamma$ is 
    \[\alpha_{u}(\gamma) = u(\gamma) \bilin \vec{x}^0(\rho(\gamma)).\]
\end{defn}

The invariant was first introduced by Margulis in \cite{Margulis1, Margulis2}, where he showed the \emph{opposite sign lemma}:
\begin{lem}[Margulis, \cite{Margulis1, Margulis2}] Let $\Gamma < \SO(2,1)$ be a convex cocompact free group and $u:\Gamma \to \bR^{2,1}$ a $\Gamma$-cocycle. If there exist $\gamma, \, \eta \in \Gamma \setminus \{1\}$ such that $\alpha_u(\gamma)\alpha_u(\eta) \leq 0,$ then $(\Gamma, u)$ does not act properly on $\bR^{2,1}$.
\end{lem}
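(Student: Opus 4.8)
The plan is to argue by contradiction: I assume $(\Gamma,u)$ acts properly on $\bR^{2,1}$ and use the hypothesis $\alpha_u(\gamma)\,\alpha_u(\eta)\le 0$ to produce a closed ball $K$ and infinitely many pairwise distinct $w\in\Gamma$ with $(w,u(w))\,K\cap K\ne\emptyset$, violating proper discontinuity. The first step is to reduce to the case $\alpha_u(\gamma)>0>\alpha_u(\eta)$. Since $\Gamma$ is convex cocompact and free, every $\gamma\ne 1$ is hyperbolic, hence regular in $\SO(2,1)$, so $\alpha_u$ is defined; moreover $\alpha_u$ is homogeneous, $\alpha_u(\gamma^k)=|k|\,\alpha_u(\gamma)$, which I would deduce from the cocycle identity $u(\gamma^k)=\bigl(\sum_{j=0}^{k-1}\gamma^{j}\bigr)u(\gamma)$, the $\Gamma$-invariance of $\bilin$, and $\gamma\,\vec x^0(\gamma)=\vec x^0(\gamma)$. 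If $\alpha_u(\delta)=0$ for some $\delta\ne 1$, then since $I-\delta$ is invertible on the span of the attracting and repelling eigenlines of $\delta$, conjugating by a translation makes $u(\delta)$ lie on the neutral eigenline, so $u(\delta)=\alpha_u(\delta)\,\vec x^0(\delta)=0$ and $(\delta,u(\delta))$ fixes the origin --- already contradicting properness. Thus I may assume strict opposite signs. Finally $\gamma$ and $\eta$ cannot commute, since commuting nontrivial elements of a free group are powers of a common element and hence have Margulis invariants of the same sign; so $\gamma,\eta$ generate a free group of rank two acting on $\Flag^+(\bR^{2,1})$ in ping-pong position.

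The heart of the argument would be a near-additivity estimate for $\alpha_u$ under products in ping-pong position. The clean fact I would use is that for every $x\in\bR^{2,1}$ and every regular $w$, $\bigl((w,u(w))x-x\bigr)\bilin \vec x^0(w)=\alpha_u(w)$, which holds because $(w-I)$ is annihilated against $\vec x^0(w)$: $\bigl((w-I)x\bigr)\bilin \vec x^0(w)=x\bilin(w^{-1}-I)\vec x^0(w)=0$. I would then take $w=\gamma^{a}\eta^{b}$ with $a,b$ large: such $w$ is regular, its attracting flag converges to $\gamma^+$ and its repelling flag to $\eta^-$, so $\vec x^0(w)$ converges to the neutral vector $\vec x^0(\gamma^+,\eta^-)$, which by definition is $\bilin$-orthogonal to the attracting eigenline of $\gamma$. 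Evaluating the identity at $x=0$ gives $\alpha_u(\gamma^a\eta^b)=\bigl(u(\gamma^a)+\gamma^a u(\eta^b)\bigr)\bilin \vec x^0(w)$; decomposing the two summands along the eigenbasis of $\gamma$, the neutral components contribute exactly $a\,\alpha_u(\gamma)+b\,\alpha_u(\eta)$, while the exponentially large components --- all along the attracting line of $\gamma$ --- are paired against $\vec x^0(w)$, which annihilates that line up to an error of the matching exponential order, so they contribute only $O(1)$. The outcome should be $\alpha_u(\gamma^a\eta^b)=a\,\alpha_u(\gamma)+b\,\alpha_u(\eta)+O(1)$ uniformly in $a,b$, possibly after replacing $\gamma,\eta$ by fixed powers and choosing the order of the product according to the ratios $\alpha_u(\cdot)/\ell(\cdot)$. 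Choosing $a_k,b_k\to\infty$ with $a_k\,\alpha_u(\gamma)+b_k\,\alpha_u(\eta)$ bounded --- possible since the invariants have opposite signs --- then yields pairwise distinct $w_k=\gamma^{a_k}\eta^{b_k}$ with $|\alpha_u(w_k)|$ bounded.

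To conclude I would show that the invariant affine line $\ell_k$ of $(w_k,u(w_k))$ --- parallel to $\vec x^0(w_k)$ and passing through $(I-w_k)^{-1}$ applied to the non-neutral part of $u(w_k)$ --- stays within bounded distance of the origin: every exponentially large piece of $u(w_k)$ lies along the top eigendirection of $w_k$, where $(I-w_k)^{-1}$ contracts at exactly the matching rate, and the remaining eigenvalues of $w_k$ stay away from $1$. Then I would pick $x_k\in\ell_k$ with $\|x_k\|$ bounded; by the previous step $(w_k,u(w_k))x_k-x_k=\alpha_u(w_k)\,\vec x^0(w_k)$ has bounded Euclidean norm, so one fixed closed ball $K$ satisfies $(w_k,u(w_k))K\cap K\ne\emptyset$ for all $k$, and since the $w_k$ are pairwise distinct this contradicts proper discontinuity. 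I expect the main obstacle to be the uniformity in $a_k,b_k$ of the two estimates: the contributions of $u(\gamma^{a_k})$ and of $\gamma^{a_k}u(\eta^{b_k})$ along the attracting direction of $\gamma$ are each exponentially large, and one must verify that they cancel against each other and against the asymptotic orthogonality of $\vec x^0(w_k)$ to that direction at precisely the right rate --- this bookkeeping, together with the choice of product order and of powers of $\gamma,\eta$ needed to keep the residual bounded, is the technical core of Margulis's argument.

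As an alternative that avoids these estimates, one can invoke Goldman--Labourie--Margulis \cite{GLM}: properness of $(\Gamma,u)$ forces $\gamma\mapsto\alpha_u(\gamma)/\ell(\gamma)$ to extend to a continuous, nowhere-vanishing function on the connected space of geodesic currents of $S=\bH^2/\Gamma$, on which periodic currents realize $\alpha_u(\gamma)/\ell(\gamma)$; such a function cannot take both signs, so no two elements of $\Gamma$ can have Margulis invariants of opposite sign. This route uses a stronger input but is much shorter.
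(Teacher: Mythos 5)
The paper offers no proof of this lemma: it is quoted with attribution to Margulis \cite{Margulis1, Margulis2}, so there is no in-paper argument to compare against. Your sketch is a reconstruction of Margulis's original argument, and its skeleton is the right one. The preliminary reductions are correct and essentially complete: homogeneity $\alpha_u(\gamma^k)=|k|\,\alpha_u(\gamma)$ from the cocycle identity and $\gamma\,\vec x^0(\gamma)=\vec x^0(\gamma)$; the observation that $\alpha_u(\delta)=0$ forces a fixed point of $(\delta,u(\delta))$ because $u(\delta)$ then lies in the span of the isotropic eigenlines where $I-\delta$ is invertible, killing properness of the cyclic subgroup; and the passage to a ping-pong pair with strictly opposite signs. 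The identity $\bigl((w,u(w))x-x\bigr)\bilin\vec x^0(w)=\alpha_u(w)$ for all $x$ is also correct and is the right pivot for the contradiction.

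What you have not supplied is precisely where all of Margulis's work lies: the uniform quasi-additivity $\alpha_u(\gamma^a\eta^b)=a\,\alpha_u(\gamma)+b\,\alpha_u(\eta)+O(1)$ and the claim that the invariant lines of $w_k=\gamma^{a_k}\eta^{b_k}$ stay at bounded distance from the origin. The uniformity issue you flag is genuine, not cosmetic: the off-axis part of $\gamma^a u(\eta^b)$ is of order $\lambda(\gamma)^{-a}\lambda(\eta)^{b}$ (flag contraction $\lambda(\gamma)^{-2a}$ against a norm of order $\lambda(\gamma)^{a}\lambda(\eta)^{b}$), and this need not be bounded when $a_k,b_k$ are chosen only so that $a_k\alpha_u(\gamma)+b_k\alpha_u(\eta)$ stays bounded; making it bounded requires calibrating the powers (or the order of the product) against the translation lengths as well, exactly the bookkeeping you defer. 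So as written this is a correct outline with the technical core asserted rather than proved. Your alternative via Goldman--Labourie--Margulis is logically valid and much shorter, though it invokes machinery far heavier than the lemma itself; note that in the context of this paper that route is essentially the one already in play, since the Ghosh--Treib positivity criterion (Proposition \ref{GT}) is the engine of Theorem \ref{thm:properness}.
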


In \cite{Lab01} Labourie extended the Margulis invariant to the diffused Margulis invariant, taking as an argument a geodesic current instead of a closed curve. Later, Goldman--Labourie--Margulis \cite{GLM} used this diffused version to show a converse of Margulis' opposite-sign lemma; an affine action with linear part in the image of the irreducible representation $\SO(2,1) \to \SO(2n,2n-1)$ is proper exactly when its diffused Margulis invariant is bounded away from $0$.

This result was extended by Ghosh-Treib in \cite{GhoshTreib} for representations with Anosov linear part, and it is their result that we will use here to show properness of our actions. Though not the most general restatement of the result of that paper, the formulation most convenient for us is:

\begin{prop}[\cite{GhoshTreib}]\label{GT} Let $\rho: \Gamma \to \SO(2n,2n-1)$ be Borel Anosov and denote by $t(\gamma)$ the translation length of $\gamma \in \Gamma < \PSL(2, \bR).$  Let $u$ be a $\rho(\Gamma)$-cocycle such that $\frac{\alpha_u(\gamma)}{t(\gamma)} \ge c > 0 $ for all $\gamma \in \Gamma \setminus \{1\}.$ Then the affine action on $\bR^{2n,2n-1}$ defined by $u$ is proper.
\end{prop}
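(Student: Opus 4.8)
This is the properness criterion of \cite{GhoshTreib} (a generalization of \cite{GLM}), specialized to our linear parts; here is the shape of the argument. First note that since $\rho$ is Borel Anosov, every $\rho(\gamma)$ with $\gamma\neq 1$ is $\bR$-regular with real eigenvalues of pairwise distinct moduli, hence diagonalizable with distinct real eigenvalues, so the neutral vector $\vec{x}^0(\rho(\gamma))$ and the Margulis invariant $\alpha_u(\gamma)$ are defined. Properness of the affine action means that for every compact $K\subset\bR^{4n-1}$ only finitely many $\gamma$ satisfy $(\rho,u)(\gamma)K\cap K\neq\emptyset$; equivalently, there is no sequence of distinct $\gamma_m\in\Gamma$ and points $v_m$ with $v_m$ and $(\rho,u)(\gamma_m)v_m$ both in a fixed compact set. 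The plan is to convert the lower bound $\alpha_u(\gamma)\ge c\,t(\gamma)$ into a lower bound on the affine displacement of $v_m$ that grows without bound, contradicting boundedness.

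The one exact computation available is the identity
\[\bigl((\rho,u)(\gamma)v\bigr)\bilin \vec{x}^0(\rho(\gamma)) \;=\; v\bilin \vec{x}^0(\rho(\gamma)) + \alpha_u(\gamma),\]
valid for all $v$ because $\rho(\gamma)$ preserves $\bilin$ and fixes the neutral line; thus the affine motion shifts the ``neutral coordinate'' of a point by exactly $\alpha_u(\gamma)$. This alone is not enough: the Euclidean norm of $\vec{x}^0(\rho(\gamma))$ is \emph{not} bounded over $\gamma\in\Gamma$ -- it blows up precisely when the attracting and repelling flags of $\rho(\gamma)$ collide, which happens for instance along conjugates $\gamma=\eta_m\gamma_0\eta_m^{-1}$ with $\eta_m\to\infty$ (note $\alpha_u$ is a conjugacy invariant, so these have constant, bounded Margulis invariant) -- so pairing with $\vec{x}^0(\rho(\gamma))$ in a fixed chart of $\bR^{4n-1}$ does not detect escape to infinity. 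The resolution, which is the content of \cite{GLM,GhoshTreib}, is to carry out the estimate \emph{uniformly} over the compact unit tangent bundle $T^1 S$ rather than in $\bR^{4n-1}$.

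Concretely one builds, from the cocycle $u$, a H\"older function $\psi_u\colon T^1 S\to\bR$ such that $\alpha_u(\gamma)$ equals the integral of $\psi_u$ over the closed geodesic associated to $\gamma$ (the Goldman--Labourie--Margulis integral formula, using the $\Gamma$-equivariant ``neutral section'' $w\mapsto \vec{x}^0\bigl(\xi(w^+),\xi(w^-)\bigr)$ over $T^1\tilde S$ together with an equivariant $\bR^{4n-1}$-valued primitive of $u$). The hypothesis $\alpha_u(\gamma)\ge c\,t(\gamma)$ for \emph{all} $\gamma$ says that these periods are bounded below by $c$ times the period length; since closed geodesics are dense in the compact space of geodesic currents and $\psi_u$ is continuous, this upgrades to a uniform lower bound on the ergodic averages of $\psi_u$, and hence, via the Anosov closing/shadowing lemma, to an estimate $\int_I\psi_u\ge c\,|I|-O(1)$ along every geodesic segment $I$ with the $O(1)$ independent of $I$. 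With this uniform positivity one runs the displacement estimate: decomposing $v_m$ along the attracting/neutral/repelling subspaces of $\rho(\gamma_m)$ and using the uniform exponential contraction supplied by the Anosov property along the long geodesic segment traversed by $\gamma_m$, the affine image $(\rho,u)(\gamma_m)v_m$ is, up to a uniformly bounded error coming from the contracted stable part and the bounded unstable alignment, a translate of the neutral coordinate by $\alpha_u(\gamma_m)\ge c\,t(\gamma_m)$. Because $T^1 S$ is compact this error is independent of $m$ and of $v_m$ in the given compact set, so $\|(\rho,u)(\gamma_m)v_m\|\to\infty$, a contradiction; this proves properness.

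I expect the genuine obstacle to be exactly this last step: making the displacement estimate \emph{uniform} over all $\gamma$ and all $v$ in a compact set, in the face of the degeneration of the natural frame (stable flag, neutral vector, unstable flag) in any fixed chart of $\bR^{4n-1}$. This is where Borel-Anosovness is essential -- it supplies uniform biproximality of $\rho(\gamma)$ and uniform transversality of the limit flags over the compact base -- and it is the technical heart of \cite{GLM} and its extension \cite{GhoshTreib}, which we use as a black box here. Section \ref{domains} then gives an independent, geometric proof of properness for the specific cocycles constructed in Section \ref{construction} via crooked fundamental domains.
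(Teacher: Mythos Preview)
The paper gives no proof of this proposition; it is stated as a citation of \cite{GhoshTreib} and used as a black box. Your proposal correctly recognizes this and supplies a reasonable expository sketch of the argument from \cite{GLM,GhoshTreib}, which is more than the paper itself offers; your closing remark that the result is used as a black box matches the paper's treatment exactly.
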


\begin{lem}\label{Positivity}
    Let $g \in \SO(2n,2n-1)$ be a positive regular element and fix an orientation on $g^+$ and $g^-$ such that the pair is oriented transverse. Let $F,G$ be oriented flags such that the quadruple $g^+,F,g^-,G$ is positive. Let $\vec{v}$ be any vector in the cone spanned by the positive half-lines in $F^{(1)}$ and $-G^{(1)}$. Then, $\vec v \cdot \vec x^0(g) > 0$.
\end{lem}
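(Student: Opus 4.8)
The plan is to reduce the statement to two one-dimensional sign computations and combine them by linearity. Fix positively oriented vectors $\vec u_F\in F^{(1)}$ and $\vec u_G\in G^{(1)}$; then every nonzero $\vec v$ in the cone spanned by the positive half-line of $F^{(1)}$ and the negative half-line of $G^{(1)}$ has the form $\vec v=a\,\vec u_F-b\,\vec u_G$ with $a,b\ge 0$ and $(a,b)\ne(0,0)$. I will establish
\[\vec u_F\cdot\vec x^0(g)>0\qquad\text{and}\qquad\vec u_G\cdot\vec x^0(g)<0;\]
then $\vec v\cdot\vec x^0(g)=a\,(\vec u_F\cdot\vec x^0(g))-b\,(\vec u_G\cdot\vec x^0(g))$ is a nontrivial nonnegative combination of positive quantities, hence strictly positive.

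For the first inequality I use that $(g^+,F,g^-)$ is a positive triple (a sub-triple of the given positive quadruple) together with Proposition \ref{LowerTriangularSpan}: there is an oriented basis $E$ with $g^+=F_E$, $g^-=F_{\widehat E}$ and with $F^{(1)}$ oriented by the first column of a unipotent lower-triangular totally positive matrix $U$. Since $g^+$ and $g^-$ are isotropic, the rescaling in the proof of Proposition \ref{prop:JBasis} replaces $E$ by a positive $J$-basis $(e_1,\dots,e_{4n-1})$ realizing the same two flags; this only multiplies the entries of the first column of $U$ by positive scalars, so they remain positive. In this $J$-basis, $\vec x^0(g)=\vec x^0(g^+,g^-)=e_{2n}$ by Definition \ref{def:neutral_vec}, the positive direction of $F^{(1)}$ is $\sum_j c_j e_j$ with all $c_j>0$, and the Gram matrix is $J$, so $e_j\cdot e_{2n}=0$ for $j\ne 2n$ while $e_{2n}\cdot e_{2n}=1$. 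Hence $\vec u_F\cdot\vec x^0(g)=c_{2n}>0$.

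For the second inequality I apply the same reasoning to the triple $(g^-,G,g^+)$, which is positive by cyclic invariance of $(g^+,g^-,G)$: this yields a positive $J$-basis $E'$ with $g^-=F_{E'}$, $g^+=F_{\widehat{E'}}$ and with $G^{(1)}$ positively spanned by a positive combination of the $e'_j$, whence $\vec u_G\cdot\vec x^0(g^-,g^+)=\vec u_G\cdot e'_{2n}>0$. It then remains to note the antisymmetry $\vec x^0(g^-,g^+)=-\vec x^0(g^+,g^-)$: starting from a $J$-basis $E$ realizing $(g^+,g^-)$, its opposite basis $\widehat E$ is again a $J$-basis (Proposition \ref{prop:JBasis}) realizing $(g^-,g^+)$, and its $2n$-th vector is $(-1)^{2n-1}e_{2n}=-e_{2n}$. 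Therefore $\vec u_G\cdot\vec x^0(g)=-\,\vec u_G\cdot\vec x^0(g^-,g^+)<0$. (Alternatively, one can read the second inequality directly off Proposition \ref{prop:quadruplepositive}: in the basis it provides, rescaled to a $J$-basis as above, $G$ is represented by $SLS$, and the first column of $SLS$ is $S$ applied to the positive first column of $L$, so its signs alternate starting with $+$ and its $2n$-th entry is negative.)

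The only delicate point is the sign bookkeeping for the $G$-direction, equivalently the antisymmetry of $\vec x^0(\cdot,\cdot)$ in its two arguments; this is exactly where the hypothesis that $F$ and $G$ lie on opposite sides of the axis of $g$ (namely $F\in\ival{g^+}{g^-}$ while $G\in\ival{g^-}{g^+}$) enters. Everything else is a direct computation in a $J$-basis.
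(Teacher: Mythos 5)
Your argument is correct, and it ultimately rests on the same computation as the paper's proof -- pairing with the middle vector $\vec{e}_{2n}$ of an adapted positive $J$-basis, where positivity of coordinates forces the sign -- but it is organized differently. The paper uses a single basis adapted to the whole quadruple $(g^+,F,g^-,G)$ via Proposition \ref{prop:quadruplepositive}: in that basis a positive representative of $F^{(1)}$ has all coordinates positive while a positive representative of $G^{(1)}$ has alternating signs (it is the first column of $SLS$), so the $2n$th coordinate of $a\vec{u}-b\vec{w}$ is positive and one concludes in one step; this is exactly your parenthetical alternative. Your main route instead splits the cone into its two extreme rays, applies Proposition \ref{LowerTriangularSpan} separately to the positive triples $(g^+,F,g^-)$ and $(g^-,G,g^+)$ (rescaling to a $J$-basis via Proposition \ref{prop:JBasis}, which is legitimate since $g^\pm$ are isotropic), and converts the second estimate using the antisymmetry $\vec{x}^0(g^-,g^+)=-\vec{x}^0(g^+,g^-)$, which you justify correctly through the opposite basis $\widehat{E}$: it is again a positively oriented $J$-basis and its $2n$th vector is $-\vec{e}_{2n}$, and by Lemma \ref{lem:orientflags} the neutral vector does not depend on the orientation choices, so the comparison goes through. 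What your version buys is that it needs only triple positivity plus this elementary antisymmetry of the neutral vector, at the cost of handling the two rays separately and verifying one extra sign identity; the paper's quadruple-adapted basis does the bookkeeping for both rays at once. Your handling of the boundary rays ($a=0$ or $b=0$, not both) is in fact slightly more careful than the paper's, which takes $a,b>0$.
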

\begin{proof}
    Choose a $J$-basis $\vec{e}_1,\dots,\vec{e}_{4n-1}$ adapted to the quadruple as in Proposition \ref{prop:quadruplepositive}. Then, a positively oriented representative vector $\vec{u}$ for $F^{(1)}$ has strictly positive coordinates and any positively oriented representative vector $\vec{w}$ for $G^{(1)}$ has signs which alternate, starting with positive. In particular, the $2n$th coordinate of $G^{(1)}$ is negative, so any linear combination $\vec{v} = a\,\vec{u} - b\,\vec{w}$ with $a,b>0$ has a positive $2n$th coordinate.

    Since the basis $e_1, \ldots, e_{4n-1}$ is adapted to $g^+,F,g^-,G,$ the neutral eigenvector for $g$ is $\vec{e}_{2n}$. Then we have
    \[\alpha_v(g) = a\,\vec{e}_{2n}\bilin \vec{u} - b\,\vec{e}_{2n}\bilin\vec{w} > 0.\]
\end{proof}

\begin{lem}\label{lem:middle_entry_big}
    Let $M\in \SO(2n,2n-1)$ be a totally positive matrix. Then, $M_{2n,2n} \ge 1$.
\end{lem}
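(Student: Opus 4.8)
The claim is that a totally positive matrix $M \in \SO(2n,2n-1)$ has central diagonal entry $M_{2n,2n} \ge 1$. The natural strategy is to exploit the constraint that $M$ preserves the bilinear form $\bilin$ with matrix $J$, i.e. $M^T J M = J$. Since $J$ has its nonzero entries only on the anti-diagonal (with alternating signs), the $(2n,2n)$ entry of $M^T J M$ picks out a sum $\sum_{i,j} M_{i,2n} J_{ij} M_{j,2n} = \sum_i (-1)^{?} M_{i,2n} M_{4n-i, 2n}$, and this equals $J_{2n,2n} = (-1)^{2n}\cdot(\pm 1)$ — reading off the given matrix, the central anti-diagonal entry of $J$ (position $(2n,2n)$) is $-1$ when $2n$ is odd and $+1$ when $2n$ is even; in either case, up to overall sign this gives a relation of the form $M_{2n,2n}^2 \pm 2\sum_{i<2n} (\pm) M_{i,2n} M_{4n-i,2n} = \pm 1$. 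The first step is to write this relation carefully, tracking the signs in $J$.

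The second and crucial step is to use total positivity to control the cross terms. All entries $M_{i,j}$ are strictly positive, so each product $M_{i,2n} M_{4n-i,2n}$ is positive. The point will be that the isometry relation forces $M_{2n,2n}^2$ to dominate: rearranging, $M_{2n,2n}^2 = \pm 1 \mp 2\sum_{i<2n}(\text{signed}) M_{i,2n}M_{4n-i,2n}$. For this to yield $M_{2n,2n}^2 \ge 1$ we need the sign of $J_{2n,2n}$ to be $+1$ (i.e. $n$ even gives it directly) OR, more robustly, we need the signed sum of cross terms to have the right sign. Here is where I expect the main obstacle: naively the signs in $J$ alternate, so the cross-term sum is not obviously sign-definite. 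The resolution should be that total positivity of $M$ — specifically, positivity of the appropriate $2\times 2$ minors of the column $(M_{1,2n},\dots,M_{4n-1,2n})^T$ together with neighboring columns, or a Cauchy–Binet / exterior-power argument — forces the alternating combination of the entries $M_{i,2n}$ to itself be "sign-coherent" in a way that makes $\sum (-1)^i M_{i,2n}M_{4n-i,2n}$ land on the favorable side. Concretely, I would try to show that the vector $\vec{x}^0$-type relation from Lemma~\ref{lem:orientflags} / the $J$-basis structure implies the middle column of $M$, when paired with itself under $\bilin$, gives $M_{(\cdot),2n}\bilin M_{(\cdot),2n} = 1$ (since $M$ preserves $\bilin$ and $e_{2n}\bilin e_{2n}=1$), and then that the totally positive structure makes the "off-diagonal isotropic contributions" nonpositive.

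An alternative, possibly cleaner route: interpret $M e_{2n}$ as (a representative of) the neutral direction of the pair of flags $(M F_E, M F_{\widehat E})$, use Lemma~\ref{lem:orientflags} to pin down its orientation, and apply a positivity argument in the spirit of Lemma~\ref{Positivity} — pairing $M e_{2n}$ against $e_{2n}$ itself and using that $(F_E, M F_E, F_{\widehat E}, M F_{\widehat E})$ or a related quadruple is positive (Proposition~\ref{prop:quadruplepositive}), so that the pairing is bounded below. The inequality $M_{2n,2n} = e_{2n}\bilin M e_{2n}$ (noting $J$-basis duality makes $e_{2n}$ self-dual up to sign) combined with $\|Me_{2n}\|^2 = \|e_{2n}\|^2 = 1$ and a Cauchy–Schwarz-type bound in the indefinite setting — valid here because the relevant cross terms are controlled by positivity — should give $\ge 1$. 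I would pursue whichever of these makes the sign bookkeeping least painful; the sign of $J_{2n,2n}$ and the parity of $n$ is the pressure point to watch throughout.
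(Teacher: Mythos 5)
Your first route is set up correctly as far as it goes: since $J_{i,4n-i}=(-1)^i$ and $2n$ is even, $J_{2n,2n}=+1$ (so the parity worry about $n$ is moot), and the $(2n,2n)$ entry of $M^TJM=J$ reads $M_{2n,2n}^2 = 1 + 2\sum_{i=1}^{2n-1}(-1)^{i+1}M_{i,2n}M_{4n-i,2n}$; this is just the statement that the middle column $Me_{2n}$ is a unit vector for $\bilin$. The lemma is therefore \emph{equivalent} to the nonnegativity of the alternating sum $M_{1,2n}M_{4n-1,2n}-M_{2,2n}M_{4n-2,2n}+\cdots$, and that is exactly where your argument stops. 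Entrywise positivity alone does not give it once $n\ge 2$: for $n=2$ a positive vector with $v_1=v_3=v_5=v_7=\epsilon$ and $v_2=v_6=1$ has $v_1v_7-v_2v_6+v_3v_5<0$, so some genuine use of the minors of $M$ is unavoidable, and the proposal never specifies which minors enter or how (the appeal to Cauchy--Binet/exterior powers is not carried out). Your alternative route has the same gap in different clothing: there is no Cauchy--Schwarz inequality for an indefinite form --- two unit spacelike vectors can pair to any real value --- so $M_{2n,2n}=e_{2n}\bilin Me_{2n}$ together with $Me_{2n}\bilin Me_{2n}=1$ yields nothing without precisely the positivity input that is missing; ``the relevant cross terms are controlled by positivity'' is the claim to be proved, not a justification.

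For comparison, the paper sidesteps all sign bookkeeping with a Gauss-type decomposition coming from flag positivity: total positivity of $M$ makes the quadruple $F_E, MF_E, MF_{\widehat{E}}, F_{\widehat{E}}$ positive ($E$ the canonical basis), which produces $M=LUD^{-1}$ with $L$ unipotent lower-triangular totally positive, $U$ unipotent upper-triangular totally positive, and $D$ diagonal with positive entries stabilizing both flags; being diagonal in $\SO(2n,2n-1)$, $D$ has middle entry $1$, and then $M_{2n,2n}=(LU)_{2n,2n}=\sum_k L_{2n,k}U_{k,2n}\ge L_{2n,2n}U_{2n,2n}=1$ since every summand is nonnegative. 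Any rescue of your quadratic-form computation would in effect have to reprove this decomposition (or an equivalent variation-diminishing property of totally positive matrices), so the flag-positivity route is the one to take.
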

\begin{proof}
    Denote the canonical basis by $E$.
    By total positivity of $M$, the oriented isotropic flags $F_E$, $MF_E$, $MF_{\widehat{E}}$, $F_{\widehat{E}}$ are cyclically ordered.

    This implies that there exists a unipotent lower triangular totally positive matrix $L \in \SL(4n-1)$ such that $LF_E = MF_E$ (\cite{bt2022}, Lemma 3.2). In fact, $L \in \SO(2n,2n-1)$ since it maps a pair of isotropic flags to a pair of isotropic flags and is unipotent. By invariance of the cyclic order, the triple $F_E,L^{-1} M F_{\widehat{E}}, F_{\widehat{E}}$ is cyclically ordered and so there exists a unipotent upper triangular totally positive matrix $U$ such that $U F_{\widehat{E}} = L^{-1} M F_{\widehat{E}}$.

    We therefore have $LU F_E = MF_E$ and $LU F_{\widehat{E}} = MF_{\widehat{E}}$, and so $M^{-1}LU = D$ stabilizes $F_E$ and $F_{\widehat{E}}$. This implies that $D$ is diagonal with positive entries, and its middle entry is $1$. Since $LU=MD$, the middle entry of $M$ is equal to $(LU)_{2n,2n}$. Finally,
    \[M_{2n,2n} = (LU)_{2n,2n} = 1 + \sum_{k=2n+1}^{4n-1} L_{2n,k}U_{k,2n} > 1.\]
\end{proof}

\begin{lem}\label{bounded}
       Let $g$ and $h$ be  regular elements in $\SO(2n,2n-1),$
       and let $F$ be a flag, with $f \in F^{(1)}$ positively 
       oriented. Suppose $g^+, F, g^-, h^-, h^+$ are cyclically 
       ordered for some choice of orientations on $g^\pm$ and $h^\pm$. Then $\vec{x}^0(g) \cdot f \le \vec{x}^0(h) \cdot f.$ 

       Similarly, if $g^-, h^-, F, h^+, g^+$ is cyclically ordered, $\vec{x}^0(h)\cdot f \ge \vec{x}^0(g)\cdot f.$
\end{lem}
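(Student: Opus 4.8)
The plan is to reduce the first inequality to the positivity statement already proved in Lemma \ref{Positivity}, applied not to $g$ and $h$ directly but to the ``difference'' of their neutral vectors. Concretely, I want to show $(\vec{x}^0(h) - \vec{x}^0(g)) \bilin f \ge 0$. The neutral vector $\vec{x}^0(g)$ depends only on the pair $(g^+, g^-)$ via Definition \ref{def:neutral_vec}, and similarly $\vec{x}^0(h) = \vec{x}^0(h^+, h^-)$. So the statement is really about five cyclically ordered flags $g^+, F, g^-, h^-, h^+$ and the associated neutral vectors of the two transverse pairs $(g^+, g^-)$ and $(h^+, h^-)$.

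The key step is to choose a single basis adapted to enough of this configuration. Since $g^+, F, g^-$ is a positive triple, Proposition \ref{LowerTriangularSpan} gives a positive $J$-basis $\vec{e}_1, \dots, \vec{e}_{4n-1}$ with $g^+ = F_E$, $g^- = F_{\widehat E}$, and $F^{(1)} = \langle \vec{u} \rangle$ where $\vec u$ is the first column of a unipotent lower-triangular totally positive matrix; in particular $\vec{x}^0(g) = \vec{e}_{2n}$ and $\vec u$ has all positive coordinates after suitable positive rescaling, so $f$ is a positive multiple of $\vec u$. Now I need to locate $h^+$ and $h^-$ relative to this basis. Because $g^-, h^-, h^+, g^+$ is cyclically ordered (this is the sub-quadruple of the given $5$-tuple, reading around), Proposition \ref{prop:quadruplepositivemap} applied to the pair $(g^-, g^+) = (F_{\widehat E}, F_E)$ — note the roles of the two extreme flags are now reversed — produces a totally positive $M \in \SO(2n, 2n-1)$ with $M g^- = h^-$ and $M g^+ = h^+$; equivalently, working in the basis $\widehat E$ rather than $E$. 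Then $\vec{x}^0(h) = \vec{x}^0(h^+, h^-) = M \vec{x}^0(g^+, g^-) = M \vec{e}_{2n}$, so $\vec{x}^0(h) \bilin f = (M\vec{e}_{2n}) \bilin f$. Since $M$ is orthogonal, $(M\vec{e}_{2n})\bilin f = \vec{e}_{2n} \bilin (M^{-1} f) = \vec{x}^0(g) \bilin (M^{-1} f)$, and $M^{-1}$ is again totally positive in $\SO(2n,2n-1)$. So the inequality becomes $\vec{x}^0(g) \bilin (M^{-1} f - f) \ge 0$, i.e., $\vec{e}_{2n} \bilin (M^{-1} f - f) \ge 0$, which is a statement about the $2n$th coordinate: I must show the $2n$th coordinate of $M^{-1} f - f$ has the correct sign (recall $\vec{e}_{2n} \bilin \vec{w}$ picks out, up to the sign convention in $J$, the $2n$th coordinate of $\vec w$, since $\vec{e}_{2n}$ is $J$-self-paired). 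Writing $f$ as a positive combination of the columns dictating $F$, this reduces to an entrywise comparison governed by Lemma \ref{lem:middle_entry_big} and the total positivity of $M^{-1}$: the middle row of $M^{-1}$ has middle entry $\ge 1$ and all other entries $\ge 0$, while $f$ has all positive entries, so $(M^{-1}f)_{2n} \ge f_{2n}$ provided the off-diagonal contributions point the right way. The second inequality follows by the symmetric argument, or by swapping the roles of $g$ and $h$ and reversing orientations, using that oriented transversality and the cyclic order are invariant under the simultaneous reversal (via the opposite-basis involution $\widehat{\phantom{E}}$).

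The main obstacle I anticipate is bookkeeping the orientations and the sign of the $J$-pairing: the form $J$ has alternating signs $\pm 1$ on the anti-diagonal, so ``$\vec{e}_{2n} \bilin \vec w$ equals the $2n$th coordinate of $\vec w$'' needs the fact that $\vec{e}_{2n}$ is the unique anti-diagonal fixed direction with $\vec{e}_{2n}\bilin\vec{e}_{2n} = \pm 1$ in the correct (positive) sign, which is exactly what Lemma \ref{lem:orientflags} and the normalization in Definition \ref{def:neutral_vec} guarantee. The other delicate point is making sure that when I invoke Proposition \ref{prop:quadruplepositivemap} I have the flags in the correct cyclic position — the proposition wants the moved flags strictly between the two fixed extreme flags — which forces me to read the quadruple $g^-, h^-, h^+, g^+$ (not $g^+,\dots,g^-$) off the given $5$-tuple, and correspondingly to work with $\widehat E$. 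Once the configuration is set up correctly, everything reduces to Lemma \ref{lem:middle_entry_big} and entrywise positivity, which is routine.
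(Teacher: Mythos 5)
Your setup is sound up to the last step, but the final inequality is where the argument breaks. You adapt the $J$-basis $E$ to $g$ (so $\vec{x}^0(g)=\vec{e}_{2n}$ and $f$ has positive coordinates), obtain a totally positive $M$ with $Mg^{\pm}=h^{\pm}$ (correctly noting that the total positivity is relative to $\widehat{E}$), and reduce the claim to $(M^{-1}f)_{2n}\ge f_{2n}$. But then you assert that ``$M^{-1}$ is again totally positive'' and that its middle row has middle entry $\ge 1$ and nonnegative off-diagonal entries. That is false as stated: the inverse of a totally positive matrix is never totally positive --- by the cofactor formula its entries carry the checkerboard signs $(-1)^{i+j}$ --- so the entrywise comparison does not go through, and Lemma~\ref{lem:middle_entry_big} cannot be applied to $M^{-1}$ without further argument. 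Your own hedge, ``provided the off-diagonal contributions point the right way,'' is exactly the missing step. One could try to rescue your route by tracking the change of basis between $E$ and $\widehat{E}$ (conjugation by the signed antidiagonal map), which twists the checkerboard signs back and in fact makes the matrix of $M^{-1}$ in the basis $E$ totally positive; but that requires the Jacobi-type sign fact about inverses of totally positive matrices, which is nowhere in the paper and which you neither state nor prove.

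The paper avoids the issue by making the opposite choice of adapted basis: take $E$ with $h^{+}=F_E$, $h^{-}=F_{\widehat{E}}$, and apply Proposition~\ref{prop:quadruplepositivemap} to the positive quadruple $(h^{+},g^{+},g^{-},h^{-})$ to get a totally positive $M\in\SO(2n,2n-1)$ with $Mh^{\pm}=g^{\pm}$. Then $F'=M^{-1}F\in\ival{h^{+}}{h^{-}}$, so $f'=M^{-1}f$ has positive coordinates, and the totally positive matrix is the one you \emph{apply} to the positive vector: $f_{2n}=(Mf')_{2n}=\sum_k M_{2n,k}f'_k\ge M_{2n,2n}f'_{2n}\ge f'_{2n}$ by Lemma~\ref{lem:middle_entry_big}, with $f'_{2n}=\vec{x}^0(g)\bilin f$ by invariance of the form. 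If you rerun your argument with the basis adapted to $h$ instead of $g$, your proof becomes the paper's and the gap disappears.
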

\proof 
Let $E$ be a $J$-basis such that $h^+ = F_E$ and $h^- = F_{\widehat{E}}$.
By Proposition \ref{prop:quadruplepositivemap} there exists $M\in \SO(2n,2n-1)$, totally positive, such that $Mh^+ = g^+$ and $Mh^- = g^-$.

With this choice of basis, $\vec x^0(h) = \vec{e}_{2n}$ is the neutral eigenvector for $h$ and so $M\vec{e}_{2n}$ is the neutral eigenvector for $g$. Let $F' = M^{-1}F$. Since $M\ival{h^+}{h^-} = \ival{g^+}{g^-}$ and $F\in \ival{g^+}{g^-}$, preservation of the partial cyclic order by $\SO(2n,2n-1)$ implies $F'\in \ival{h^+}{h^-}$. This in turn implies that $f' = F'_1$ has only positive entries in the basis $E$ by Proposition \ref{LowerTriangularSpan}. We then have

\[\vec x^0(h) \bilin f = f_{2n} = (Mf')_{2n} = \sum_k M_{2n,k} f'_k > f'_{2n}\]

since $M_{2n,2n} \ge 1$ by Lemma \ref{lem:middle_entry_big}. Moreover, $f'_{2n} = e_{2n} \bilin f' = M e_{2n} \bilin M f' = \vec x^0(g) \bilin f$.
\endproof

Recall that we call a set of properly embedded disjoint arcs on a surface \emph{filling} if the arcs cut the surface into topological disks. In particular, every homotopically nontrivial curve on the surface intersects at least one of the arcs in a filling arc system.

\begin{thm}\label{thm:properness}
    Let $\rho: \Gamma \to \SO(2n,2n-1)$ be a positive Anosov representation of a convex cocompact free group $\Gamma < \PSL_2(\bR)$ and let $\mA$ be a filling system of arcs on $\bH^2/\Gamma.$ Let $\xi: \Lambda_\Gamma \cup \Lambda_\mA \to \Flag^+(\bR^{2n,2n-1})$ be a positive equivariant map. Let $u : \Gamma \to \bR^{2n,2n-1}$ be a generalized infinitesimal strip deformation associated to $\mA.$
    Then $(\rho,u)(\Gamma)$ acts properly discontinuously on $\bR^{2n,2n-1}.$
\end{thm}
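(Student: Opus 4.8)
The plan is to reduce properness to the criterion of Proposition~\ref{GT}, namely to show that $\alpha_u(\gamma)/t(\gamma) \ge c > 0$ uniformly over $\gamma \in \Gamma \setminus \{1\}$. Since a positive Anosov representation is Borel Anosov and every $\rho(\gamma)$ is regular, the Margulis invariant $\alpha_u(\gamma) = u(\gamma) \bilin \vec x^0(\rho(\gamma))$ is well-defined. The strategy is to expand $u(\gamma)$ using its defining sum~\eqref{CocycleDef} over the arc crossings of a well-chosen path $c_\gamma$, and to estimate each summand $\sigma(x)(\vec v_{a_x}^+ - \vec v_{a_x}^-) \bilin \vec x^0(\rho(\gamma))$ from below using Lemma~\ref{Positivity} and Lemma~\ref{bounded}.

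First I would fix a geodesic axis for $\gamma$ in $\bH^2$ and take $c_\gamma$ to be (a lift of) the closed geodesic representing the conjugacy class of $\gamma$, so that the crossings of $c_\gamma$ with the arc system $\tilde\mA$ correspond exactly to the intersections of the closed geodesic with $\mA$. Because $\mA$ is filling, there is at least one such crossing, and the number of crossings grows linearly with $t(\gamma)$ — this linear growth is what will ultimately produce the bound by $c \cdot t(\gamma)$. For a crossing point $x$ lying on the lift $a_x$ with endpoints $a_x^+, a_x^-$, the axis of $\gamma$ separates these two endpoints, so the attracting and repelling flags $\rho(\gamma)^\pm$ of $\gamma$ and the flags $\xi(a_x^+), \xi(a_x^-)$ are arranged (after matching the orientation $\sigma(x)$ with the transverse orientation of $a_x$) so that the quadruple $\rho(\gamma)^+, \xi(a_x^{\sigma(x)}), \rho(\gamma)^-, \xi(a_x^{-\sigma(x)})$ is positive. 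Applying Lemma~\ref{Positivity} with $g = \rho(\gamma)$, $F = \xi(a_x^{\sigma(x)})$, $G = \xi(a_x^{-\sigma(x)})$, and $\vec v = \sigma(x)(\vec v_{a_x}^+ - \vec v_{a_x}^-)$ — which lies in the cone spanned by the positive ray of $F^{(1)}$ and the negative ray of $G^{(1)}$ — shows each summand is strictly positive. Hence $\alpha_u(\gamma) > 0$ for every $\gamma$.

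The main obstacle is upgrading this strict positivity of each term to a \emph{uniform} lower bound, i.e.\ producing the constant $c > 0$ independent of $\gamma$. The individual terms can in principle degenerate to $0$ as the crossing point $x$ ranges over the (infinitely many) $\Gamma$-translates of finitely many arcs while $\gamma$ varies. Here is where Lemma~\ref{bounded} enters: it lets me compare $\vec x^0(\rho(\gamma)) \bilin f$ with $\vec x^0(h) \bilin f$ for a fixed reference regular element $h$ whose attracting/repelling flags sandwich those of $\rho(\gamma)$ relative to the flag $F = \xi(a_x^\pm)$, producing a monotone bound. The clean way to organize this: for each of the finitely many arcs $a$ in $\mA$, and each of the finitely many ``sides'' from which an axis can cross a fundamental lift, the relevant flag $\xi(a^\pm)$ ranges over a compact set of configurations modulo the $\Gamma$-action (using convex cocompactness of $\Gamma$ and continuity/equivariance of $\xi$), and the quantity $\vec v_{a}^{\pm} \bilin \vec x^0(\rho(\gamma))$ is bounded below by a positive constant depending only on that compact data via Lemma~\ref{bounded} — replacing $\rho(\gamma)$ by a worst-case ``extreme'' regular element. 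Summing the $\ge \varepsilon > 0$ contributions over the $\asymp t(\gamma)$ crossings (and checking that cancellation cannot occur because all contributions have the \emph{same} sign, which is the content of having matched $\sigma(x)$ with the transverse orientation) gives $\alpha_u(\gamma) \ge \varepsilon' \cdot t(\gamma)$, and Proposition~\ref{GT} concludes.

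A secondary technical point I would need to handle carefully is the passage between the path $c_\gamma$ used to define $u(\gamma)$ (which starts at $p_0$ and ends at $\gamma \cdot p_0$) and the closed geodesic axis used for the estimate; since the cocycle value is independent of the choice of $c_\gamma$ among admissible paths, and changing $c_\gamma$ by a bounded homotopy changes the crossing set only by a $\gamma$-independent bounded amount, the asymptotic count and the sign-coherence of the terms are unaffected, so this does not alter the uniform bound. With the constant $c$ in hand, properness is immediate from Proposition~\ref{GT}.
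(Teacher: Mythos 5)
Your overall architecture coincides with the paper's: reduce to Proposition \ref{GT}, get per-crossing positivity from Lemma \ref{Positivity} applied to the positive quadruple $\rho(\gamma)^+,\xi(a^+),\rho(\gamma)^-,\xi(a^-)$, get a crossing count comparable to $t(\gamma)$ from fillingness plus convex cocompactness, and get uniformity from Lemma \ref{bounded}. However, the crux of the argument --- the uniform lower bound per crossing --- is exactly the step you leave unspecified, and as written it has a gap. Your appeal to ``a compact set of configurations modulo the $\Gamma$-action'' addresses the wrong variable: modulo $\Gamma$ there are only finitely many arcs (no compactness or continuity of the extended $\xi$ is needed for that), but what varies unboundedly is the pair of fixed flags $\rho(\gamma)^\pm$ as $\gamma$ ranges over $\Gamma$, and a priori the quantity $\vec x^0(\rho(\gamma))\bilin(\vec v_a^+-\vec v_a^-)$ could degenerate to $0$ along a sequence of such $\gamma$. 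Lemma \ref{bounded} only converts this into a uniform bound once you \emph{exhibit} a fixed reference element per arc satisfying its cyclic-order hypotheses for every $\gamma$ crossing that arc; asserting a ``worst-case extreme regular element'' is naming the needed object, not constructing it.

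The paper fills this hole with a concrete geometric device: the arc $a$ exits the convex core through two lifts of boundary geodesics of the convex core, and one takes the peripheral elements $g_a$, $h_a$ translating along these lifts. Since the axis of any $\gamma\in\Gamma$ stays inside the convex core, it separates these two lifts, so the tuples $\xi(g_a^+),\xi(a^+),\xi(g_a^-),\xi(\gamma^-),\xi(\gamma^+)$ and $\xi(h_a^-),\xi(a^-),\xi(h_a^+),\xi(\gamma^+),\xi(\gamma^-)$ are positive; two applications of Lemma \ref{bounded} then give $\vec x^0(\gamma)\bilin(\vec v_a^+-\vec v_a^-)\ \ge\ \vec x^0(g_a)\bilin\vec v_a^+-\vec x^0(h_a)\bilin\vec v_a^-$, a quantity depending only on the arc, and finiteness of $\mA$ together with $\SO(2n,2n-1)$-equivariance yields the uniform constant $c>0$. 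To repair your write-up you must either supply these reference elements and verify the separation statement, or replace the vague compactness claim by a correct one (e.g.\ compactness of the set of pairs of limit points separated by the endpoints of a fixed arc, together with continuity of $(F,G)\mapsto\vec x^0(F,G)$ on transverse pairs and of the Anosov boundary map, so that the strictly positive function has a positive minimum); either way the current argument does not yet establish the constant $c$. Your handling of the passage from $c_\gamma$ to the geodesic representative is fine, since the Margulis invariant is a cohomology invariant and all contributions have the same sign.
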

\proof By Proposition \ref{GT}, it is enough to show that the Margulis invariant $\alpha_u$ is bounded below away from $0$.

Let $\gamma \in \Gamma \setminus \{1\}.$ Recall the definition of $u(\gamma)$ from Equation \eqref{CocycleDef}:

\[u(\gamma) = \sum_{x \in X_{c_\gamma}}\sigma(x)(\vec{v}_{a_x}^+ - \vec{v}_{a_x}^-).\]

Let $a$ be an arc crossed by a path $c_\gamma$ in a positive direction. We need to compute $\vec x^0(\gamma)\cdot u(\gamma),$ for which it is enough to compute an inner product of the form $\vec x^0(\gamma)\cdot (
\vec v_a^+ - 
\vec v_a^-)$. The vector $\vec v =
\vec v_a^+ - 
\vec v_a^-$ is in the cone spanned by the positive vectors in $\xi(a^+)^{(1)}$ and $-\xi(a^-)^{(1)}$, and due to the positivity of $\xi$ and because $\gamma$ intersects $a$ in the positive direction, $\rho(\gamma)^+, \xi(a^+), \rho(\gamma)^-, \xi(a^-)$ is a positive quadruple of flags. Therefore by Lemma \ref{Positivity}, $\vec x^0(\gamma) \cdot \vec v > 0.$

Similarly, if $c_\gamma$ crosses $a$ in the negative direction, the product $\vec x^0(\rho(\gamma)) \cdot (
\vec v_a^+ - 
\vec v_a^-)$ is negative and this sign is cancelled by $\sigma(x)$.

As $u(\gamma)$ is a sum of vectors of the form $\vec v_a^+ - \vec v_a^-,$ we conclude that $\alpha_u(\gamma) > 0$ for all $\gamma \ne 1.$

We now show that for each $\gamma,$ the contribution to its Margulis invariant upon crossing each arc is uniformly bounded below.

Suppose $\gamma$ is not in the boundary of the convex core and that 
$c_\gamma$ crosses the arc $a$ positively. The arc $a$ crosses exactly two lifts of the boundary of the convex core of $S,$ call them $c^+_a$ and $c^-_a,$ with $c^+_a$ being closer to $a^+.$ Let $g_a \in \Gamma$ be the element translating along $c_a^+$ in the positive direction with respect to the transverse orientation on $a$, and let $h_a$ translate positively along $c_a^-.$ An illustration of such a configuration is in Figure \ref{configuration}

\begin{figure}[h!]
 \floatbox[{\capbeside\thisfloatsetup{capbesideposition={right,top},capbesidewidth=0.62\textwidth}}]{figure}[\FBwidth]
  {\caption{We can see the configuration of endpoints of an arc $a$ (in blue) in $\tilde \mA$ and an element $\gamma$ that crosses it (in pink). The red curves are the lifts of two boundary curves, with elements $g_a$ and $h_a$ translating along them. The points $a^+,g_a^-,\gamma^-,h_a^-,a^-,h_a^+,\gamma^+,g_a^+$ are oriented positively, and therefore so are their images under the boundary map $\xi$, giving a positive configuration of eight flags in $\Flag^+(\bR^{4n-1}).$}\label{configuration}}
{\begin{overpic}[width=0.34\textwidth]{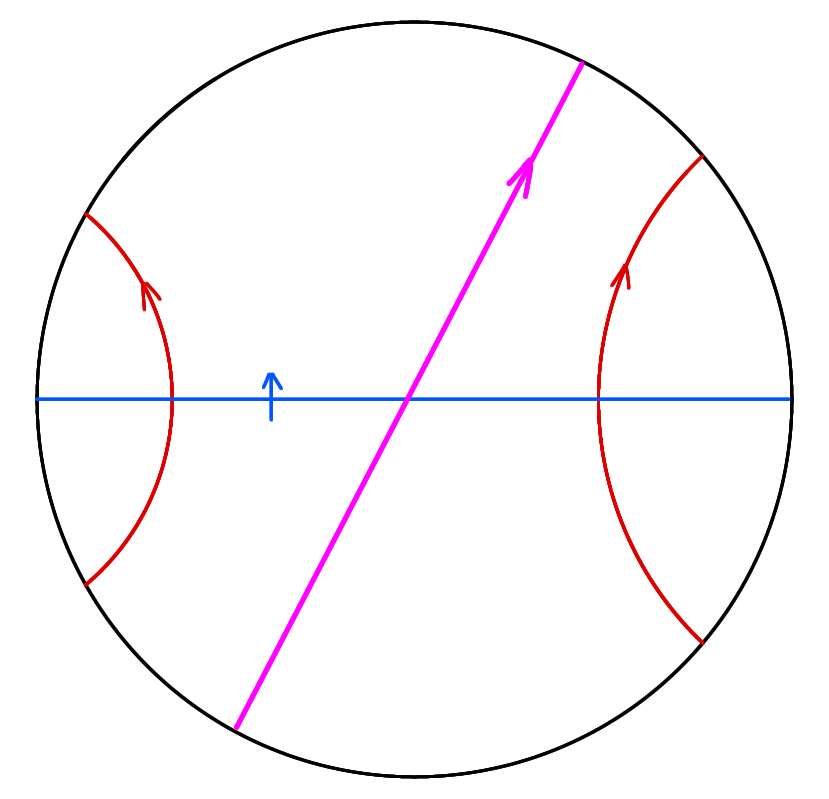}
\put(40,25){\tiny$\gamma$}
\put(71, 90){\small$\gamma^+$}
\put(23,0){\small$\gamma^-$}
\put(34,50){\tiny $a$}
\put(-4,47){\small$a^+$}
\put(97, 47){\small$a^-$}
\put(20,61){\tiny$g_a$}
\put(70,23){\tiny$h_a$}
\put(3,72.5){\small$g_a^+$}
\put(2,19){\small$g_a^-$}
\put(88,79){\small$h_a^+$}
\put(87,12){\small$h_a^-$}
\end{overpic}}
\end{figure}

Then $\xi(g_a^+), \xi(a^+), \xi(g_a^-), \xi(\gamma^-), \xi(\gamma^+)$ and $\xi(h_a^-), \xi(a^-), \xi(h_a^+), \xi(\gamma^+), \xi(\gamma^-)$ are both positive, as the axis of $\gamma$ has to lie inside the convex core of $S$ and thus separates $c_a^+$ and $c_a^-$. From Lemma \ref{bounded}, we get 
\begin{eqnarray*}
    \vec x^0(\gamma)\cdot \vec v &=& 
    \vec x^0(\gamma)\cdot \vec v_a^+ - 
    \vec x^0(\gamma) \cdot \vec v_a^- \ge  
    \\ 
    &\ge& 
    \vec x^0(g_a)\cdot \vec v_a^+ - 
    \vec x^0(h_a) \cdot \vec v_a^-.
\end{eqnarray*}

If $\gamma$ is one of the boundary curves of the convex core, then the above inequality still holds for the same reason, even if one of the inequalities coming from Lemma \ref{bounded} is an equality.

Since the system of arcs $\mA$ is finite, the set $\{
\vec x^0(g_a)\cdot \vec v_a^+ - 
\vec x^0(h_a) \cdot \vec v_a^-\}_{a \in \mA}$ is a finite set of positive numbers, so it attains a minimum $c > 0$.

Because the holonomy of the surface $S$ is convex cocompact, its convex core has bounded diameter $D$. The translation length of an element $\gamma \in \Gamma < \PSL_2(\bR)$ is the same as the length of its geodesic representative, which stays within the convex core of $S.$ Therefore, if $\gamma$ crosses $n$ arcs in $\mA$, its translation length is at most $nD,$ and conversely, a closed geodesic of length $l$ must cross $\mA$ at least $\lfloor \frac lD \rfloor$ times.

Suppose $\gamma$ has translation length $l.$ Then it crosses $\mA$ at least $\lfloor \frac lD \rfloor$ times, and each crossing contributes at least $c$ to the Margulis invariant of $u$. Therefore, $$\alpha_u (\gamma) \ge c \frac lD,$$ and by Proposition \ref{GT}, the action of $(\Gamma, u)$ on $\bR^{2n,2n-1}$ is proper.
\endproof

\section{Fundamental domains}\label{domains}
In this section, we prove properness of the action by building a fundamental domain bounded by piecewise linear hypersurfaces in $\bR^{4n-1}$. The hypersurfaces are boundaries of \emph{crooked halfspaces} introduced in \cite{bt2022}, adapted to the affine setting by taking the cone over a halfspace in the sphere of directions. This notion is a generalization of crooked planes in three dimensions defined by Drumm and Goldman, see \cite{Drumm, DrummGoldman2}.

The \emph{upper sign variation} of a nonzero vector $\vec{v}\in \bR^{2n,2n-1}$, denoted $S_E^+(\vec{v})$, is the number of times the sign of the coordinates of $\vec{v}$ in the basis $E$ change, where we independently assign a sign to the $0$ coordinates in order to maximize this value. The \emph{lower sign variation} $S_E^-(\vec v)$ is the number of sign variations in the coordinates of $\vec v$ in the basis $E$, assigning the signs to $0$ that minimize the value.

For example, let $\vec v = (1,0,3)$ in the basis $E.$ Then $S^+_E(\vec v) = 2$ and $S^-_E(\vec v) = 0.$ For $\vec v' = (-1,1,2),$ the lower and upper sign variations are both $1.$

\begin{defn}
    The \emph{open crooked halfspace} associated to an isotropic basis $E$ is the following open subset of $\bR^{2n,2n-1}$:
    \begin{align*}
    \half_E = \{\vec{v}\in \bR^{2n,2n-1} ~|~ S^+_E(\vec{v}) \le 2n-1 &\text{ and in case of equality the sign of the}\\
    &\text{last coordinate used for computing}\\
    &S^+ \text{ is positive}\}.
    \end{align*}
    The \emph{closed crooked halfspace} $\overline{\half}_E$ is defined similarly:
    \begin{align*}
    \overline{\half}_E = \{\vec{v}\in \bR^{2n,2n-1} ~|~ S^-_E(\vec{v}) \le 2n-1 &\text{ and in case of equality the sign of the}\\
    &\text{last nonzero coordinate is positive}\}.
    \end{align*}
\end{defn}

\begin{lem}
    The closed crooked halfspace is the topological closure of the open crooked halfspace.
\end{lem}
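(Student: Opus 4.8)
The plan is to show the two inclusions $\overline{\half_E} \subseteq \overline{\half}_E$ and $\overline{\half}_E \subseteq \overline{\half_E}$, where the bar on the left of the first and the right of the second denotes topological closure. The first inclusion is the easier direction: $\overline{\half}_E$ is closed, so it suffices to check $\half_E \subseteq \overline{\half}_E$. For a vector $\vec v$, one always has $S_E^-(\vec v) \le S_E^+(\vec v)$, and when $\vec v$ has no zero coordinates the two agree and the ``last coordinate'' conditions coincide. When $\vec v$ has some zero coordinates, I would argue that if $\vec v \in \half_E$ — so $S_E^+(\vec v) \le 2n-1$ with the sign-at-equality condition — then in particular $S_E^-(\vec v) \le 2n-1$, and when $S_E^-(\vec v) = 2n-1$ the last nonzero coordinate must be positive, since otherwise one could choose signs for the zeros to push $S_E^+$ up to $2n-1$ with a negative last coordinate, or even to $2n$, contradicting membership in $\half_E$. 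So $\half_E \subseteq \overline{\half}_E$ and hence $\overline{\half_E} \subseteq \overline{\half}_E$.

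For the reverse inclusion $\overline{\half}_E \subseteq \overline{\half_E}$, given $\vec v \in \overline{\half}_E$ I want to produce a sequence $\vec v_m \to \vec v$ with $\vec v_m \in \half_E$. The idea is to perturb $\vec v$ slightly so that all coordinates become nonzero while not increasing the sign variation beyond what is allowed. Concretely, write $\vec v = (v_1, \dots, v_{4n-1})$ and fix a choice of signs for the zero coordinates realizing $S_E^-(\vec v)$: replace each zero coordinate $v_i$ by $\epsilon_m \cdot s_i$ where $s_i \in \{+1,-1\}$ is chosen to be consistent with that minimizing sign assignment (e.g. equal to the sign of the nearest preceding nonzero coordinate, or, for an initial run of zeros, the sign of the first nonzero coordinate), and $\epsilon_m \to 0^+$. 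For such a perturbation, the perturbed vector $\vec v_m$ has no zero coordinates and $S_E(\vec v_m) = S_E^-(\vec v)$ exactly, because inserting a nonzero coordinate equal in sign to its neighbor creates no new variation. If $S_E^-(\vec v) < 2n-1$ this immediately gives $\vec v_m \in \half_E$. If $S_E^-(\vec v) = 2n-1$, the last nonzero coordinate of $\vec v$ is positive by hypothesis; I need to ensure the last coordinate of $\vec v_m$ is positive and that it is the ``last coordinate used'' for $S_E^+(\vec v_m) = S_E(\vec v_m) = 2n-1$. If $v_{4n-1} > 0$ this is automatic; if $v_{4n-1} = 0$, then the trailing run of zeros should be perturbed to the sign of the last nonzero coordinate, which is positive, so again $\vec v_m$ ends in a positive coordinate and the equality case condition holds.

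The main obstacle I anticipate is bookkeeping the ``in case of equality'' clauses carefully, especially runs of zeros at the very start or very end of the vector, and making sure the perturbation does not accidentally create an extra sign change (which would happen if a zero were perturbed to the ``wrong'' sign relative to its neighbors). The clean way to handle this is to first record a single sign assignment $\tilde s = (\tilde s_1, \dots, \tilde s_{4n-1})$ to all coordinates (using $\tilde s_i = \operatorname{sign}(v_i)$ when $v_i \ne 0$, and a locally-constant extension across each maximal run of zeros matching the adjacent nonzero sign) that simultaneously realizes $S_E^-(\vec v)$ and whose final entry is positive whenever $S_E^-(\vec v) = 2n-1$; then set $\vec v_m = \vec v + \epsilon_m \sum_{i : v_i = 0} \tilde s_i e_i$. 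One checks directly that $S_E(\vec v_m)$ equals the number of sign changes in $\tilde s$, which is $S_E^-(\vec v) \le 2n-1$, with a positive terminal coordinate in the equality case, so $\vec v_m \in \half_E$ and $\vec v_m \to \vec v$. Combining both inclusions yields $\overline{\half}_E = \overline{\half_E}$.
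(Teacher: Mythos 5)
Your proof is correct, and it is in fact more complete than the one-sentence argument in the paper. For the inclusion $\overline{\half_E}\subseteq\overline{\half}_E$ the paper invokes closedness of $\overline{\half}_E$ together with the semicontinuity inequality $S^-_E(\lim_k \vec v_k)\le \lim_k S^+_E(\vec v_k)$, whereas you verify the pointwise containment $\half_E\subseteq\overline{\half}_E$ directly, including the check that when $S^-_E(\vec v)=2n-1$ a negative last nonzero coordinate would force either $S^+_E(\vec v)=2n$ (if there are trailing zeros) or a violation of the equality clause of $\half_E$ (if there are none) --- the same idea, just carried out elementwise instead of along sequences. For the reverse inclusion $\overline{\half}_E\subseteq\overline{\half_E}$, the paper leaves the approximation step implicit, while you make it explicit by fixing a sign assignment $\tilde s$ realizing $S^-_E(\vec v)$ (with positive terminal sign in the equality case) and perturbing the zero coordinates by $\epsilon_m\tilde s_i$, so the perturbed vectors have sign variation exactly $S^-_E(\vec v)$ and lie in $\half_E$; this is exactly the density argument needed. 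The only loose end is the degenerate case $\vec v=0$ (or any situation where your ``nearest nonzero neighbour'' rule has no nonzero coordinate to refer to), where one simply perturbs to a constant-sign vector; this is trivial and does not affect the validity of the argument.
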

\begin{proof}
    This follows from the fact that $\overline{\half}_E$ is closed and the observation that for any sequence $\vec{v}_k \in \bR^{4n-1}$, the inequality
    $S^-_E(\lim_{k\to\infty} \vec{v}_k) \le \lim_{k\to\infty} S^+_E(\vec{v}_k)$ holds.
\end{proof}

When $n=1$, a crooked halfspace is obtained by taking the interior of the closure of $4$ of the $8$ open orthants in $\bR^3$, corresponding to the signs $(+,+,+)$, $(-,+,+)$, $(-,-,+)$, $(-,-,-)$. In general, a crooked halfspace contains half of the orthants, and it is open and contractible.

We see from this example that the name \emph{halfspace} seems justified, and indeed in general we have the following lemma:
\begin{lem}\label{lem:opposite_halfspace}
    The complement of the open halfspace $\half_E$ is the closed halfspace defined by the opposite basis, $\overline{\half}_{\hat E}.$
\end{lem}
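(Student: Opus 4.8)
The plan is to reduce the set equality to two facts about the sign-variation functionals, both of which follow from recording how coordinates transform when one passes from the basis $E$ to the opposite basis $\widehat{E}$. If $\vec v$ has coordinates $(a_1,\dots,a_{4n-1})$ in $E$, then from $\widehat{E}=(e_{4n-1},-e_{4n-2},\dots,-e_2,e_1)$ one reads off that its coordinates in $\widehat{E}$ are $(b_1,\dots,b_{4n-1})$ with $b_j=(-1)^{j+1}a_{4n-j}$; that is, the $\widehat{E}$-coordinate sequence is the $E$-coordinate sequence read backwards and then twisted by the alternating signs $+,-,+,-,\dots$. In particular $a_i=0\iff b_{4n-i}=0$, so assignments of signs to the zero coordinates correspond bijectively in the two pictures.

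The key computation (Step 1) is the identity $S^+_E(\vec v)+S^-_{\widehat{E}}(\vec v)=4n-2$. Reversing a sign sequence of length $4n-1$ does not change its number of sign changes, while twisting the $j$-th entry by $(-1)^{j+1}$ replaces each of the $4n-2$ consecutive products $s_is_{i+1}$ by its negative, hence sends a sign-change count $v$ to $(4n-2)-v$. Running this through the bijection of sign-fillings and taking a maximum on the $E$ side against a minimum on the $\widehat{E}$ side yields the identity.

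Granting this, the lemma is immediate except in one borderline case: if $S^+_E(\vec v)\le 2n-2$ then $\vec v\in\half_E$ while $S^-_{\widehat{E}}(\vec v)\ge 2n$, so $\vec v\notin\overline{\half}_{\widehat{E}}$, and the case $S^+_E(\vec v)\ge 2n$ (which in particular covers $\vec v=0$) is symmetric. What remains (Step 2) is to show that when $S^+_E(\vec v)=S^-_{\widehat{E}}(\vec v)=2n-1$ the two tie-break conditions are complementary. For this I would let $j_0\le j_1$ be the first and last indices of nonzero $E$-coordinates and observe that an $S^+$-maximizing filling alternates freely over the leading block $\{1,\dots,j_0\}$ and the trailing block $\{j_1,\dots,4n-1\}$, so $2n-1=(j_0-1)+S^+_{\mathrm{inner}}+(4n-1-j_1)$, where $S^+_{\mathrm{inner}}$ is the maximal sign variation of $(a_{j_0},\dots,a_{j_1})$. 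Since $2n-1$ is odd this fixes the parity of $S^+_{\mathrm{inner}}$; and because the last sign of any sign sequence is $(-1)^{(\text{number of sign changes})}$ times its first sign, this in turn fixes the parity relation between the signs of $a_{j_0}$ and $a_{j_1}$. Plugging this into the two tie-break quantities — ``the last coordinate used to compute $S^+_E(\vec v)$'' is the entry at position $4n-1$ of the maximizing filling, whose sign is forced to be $(-1)^{4n-1-j_1}$ times the sign of $a_{j_1}$, while ``the last nonzero $\widehat{E}$-coordinate of $\vec v$'' is $b_{4n-j_0}=(-1)^{4n-j_0+1}a_{j_0}$ — one checks that the product of these two signs equals $-1$, so exactly one of the two halfspace membership conditions holds.

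The main obstacle is this final bookkeeping in Step 2: the inequality part of the lemma is a one-line consequence of the Step 1 identity, but the tie-break forces one to track precisely how the leading and trailing runs of zeros contribute to $S^+$ and then to chase several parities. The argument goes through only because the threshold $2n-1$ appearing in the definition of $\half_E$ is odd, which is exactly what makes the two tie-break conditions disagree rather than agree.
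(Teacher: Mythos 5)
Your proposal is correct and follows essentially the same route as the paper: the change-of-basis formula for $\widehat{E}$, the identity $S^+_E(\vec v)+S^-_{\widehat{E}}(\vec v)=4n-2$, and a sign analysis of the borderline case $S^+_E(\vec v)=2n-1$. Your Step 2 is in fact a more careful version of the paper's one-line tie-break claim (which just asserts $v_1<0$), since you track leading/trailing zero blocks and show the two tie-break signs multiply to $-1$, giving both inclusions of the complement at once.
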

\proof If $\vec v = (v_1, v_2 , \ldots,v_{4n-2}, v_{4n-1})$ in basis $E,$ its expression in the basis $\widehat{E}$ is $(v_{4n-1}, - v_{4n-2}, \ldots, -v_2, v_1).$ The sign variations with respect to the two bases are related by $S_E^+(\vec{v}) + S_{\widehat{E}}^-(\vec{v}) = 4n-2$.

Therefore, $S_E^+(\vec v) \le 2n-1$ if and only if $S_{\hat E}^-(\vec v) \ge 2n-1.$ Further, if $S_E^+(\vec v) = 2n-1$ and  $v_{4n-1}$ is positive (or 0), putting $\vec v \in \half_E,$ then $v_1$ has to be negative, and thus $\vec v \notin \overline{\half}_{\hat E}.$

\endproof

The boundary $\overline{\half}_E - \half_E = \overline{\half}_{\hat E} - \half_{\hat E}$ of a closed crooked halfspace will be called a \emph{crooked hyperplane} and denoted by $\mathcal C_E.$ 

\begin{defn}
    The \emph{stem-quadrant} of a halfspace is the set of vectors which translate $\half_E$ inside itself:
    \[SQ(E) = \{\vec{u}\in \bR^{2n,2n-1} ~|~ \vec{u} + \half_E \subset \half_E\}.\]
    Note that since $\half_E = (\overline{\half}_E)^\circ$ where $^\circ$ denotes the topological interior, the stem quadrant is also the set of vectors which translate $\overline{\half}_E$ inside itself.
\end{defn}

This is the same as the original definition of the stem quadrant (or \emph{translational semigroup}) for a crooked plane in $\bR^{2,1}$ introduced in \cite{BCDG}.

\begin{prop}
    The stem-quadrant $SQ(E)$ is the convex cone generated by the vectors $-\vec{e}_1$ and $\vec{e}_{4n-1}$.
\end{prop}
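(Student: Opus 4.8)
The plan is to prove two inclusions. For the easy direction, I would first check directly that $-\vec e_1$ and $\vec e_{4n-1}$ each lie in $SQ(E)$, and that $SQ(E)$ is a convex cone (closed under positive scaling and addition), which is immediate from its definition as $\{\vec u : \vec u + \half_E \subset \half_E\}$. To see $\vec e_{4n-1} \in SQ(E)$: adding a positive multiple of $\vec e_{4n-1}$ changes only the last coordinate $v_{4n-1}$. I need to argue this cannot increase the upper sign variation $S_E^+$ past $2n-1$, and cannot create a violation of the sign condition on the last coordinate used. The point is that modifying only the final coordinate can only affect sign changes occurring at the very end of the coordinate string; a careful case analysis on whether $v_{4n-1}$ was positive, negative, or zero, and what the preceding nonzero coordinate is, shows $S_E^+$ does not increase and the boundary sign condition is preserved (intuitively, pushing the last coordinate in the positive direction only helps). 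The case of $-\vec e_1$ is symmetric via Lemma \ref{lem:opposite_halfspace}: since $\half_E$'s complement is $\overline{\half}_{\hat E}$ and the first coordinate in basis $E$ is the last in basis $\hat E$ (up to sign), translating by $-\vec e_1$ corresponds to translating by $+\vec e_{4n-1}$ in the opposite basis, so it preserves $\overline{\half}_{\hat E}$ and hence preserves its complement $\half_E$.

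For the reverse inclusion, suppose $\vec u = (u_1,\dots,u_{4n-1}) \in SQ(E)$; I must show $u_1 \le 0$, $u_{4n-1} \ge 0$, and $u_i = 0$ for $1 < i < 4n-1$. The strategy is to feed carefully chosen test vectors $\vec v \in \half_E$ into the condition $\vec u + \vec v \in \half_E$ and extract constraints. To kill an intermediate coordinate $u_i$ with $1<i<4n-1$: pick $\vec v$ with exactly $2n-1$ sign changes arranged so that coordinate $i$ is the last coordinate used in computing $S_E^+$, with a tiny nonzero value there; then perturbing that coordinate by $u_i$ (if $u_i \neq 0$) either pushes $S_E^+$ above $2n-1$ or violates the "last coordinate positive" condition, depending on the sign — and one can choose the sign pattern of $\vec v$ so that either sign of $u_i$ leads to a contradiction. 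A cleaner variant: choose $\vec v$ so that $v_i$ is already at a sign change and nearly zero, with both a positive and a negative test configuration available; since $\vec u + t\vec v \in \half_E$ for the relevant $\vec v$'s, letting $v_i \to 0$ forces the sign pattern of $\vec u$ near position $i$ to itself have few sign changes, and iterating pins down $u_i = 0$. Similarly, to force $u_1 \le 0$: take $\vec v = (\epsilon, w_2,\dots,w_{4n-1}) \in \half_E$ with $\epsilon > 0$ small and the rest chosen to realize $S_E^+ = 2n-1$ with the first coordinate participating as a sign change; if $u_1 > 0$ this is fine, but taking instead $\vec v$ with first coordinate $-\epsilon$ and an appropriate tail (still in $\half_E$, now with the first coordinate not creating an extra change), adding $u_1 > 0$ flips the first coordinate to positive and introduces an extra sign variation, contradiction. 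The symmetric argument with Lemma \ref{lem:opposite_halfspace} handles $u_{4n-1} \ge 0$.

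The main obstacle I anticipate is bookkeeping the sign-variation combinatorics cleanly — in particular handling the zero coordinates, since $S_E^+$ assigns signs to zeros to maximize variation, so a vector with a zero coordinate is genuinely "on the boundary" in a subtle way, and the open/closed distinction between $\half_E$ and $\overline{\half}_E$ matters in exactly these cases. I would organize the argument by first reducing to the case where all coordinates of the test vectors are nonzero (using a density/limiting argument and the closedness observation from the proof that $\overline{\half}_E = \overline{\half_E}$), so that $S_E^+ = S_E^-$ on the test vectors and the analysis becomes a clean statement about strict sign sequences; then the two inclusions reduce to: (1) adding a nonnegative multiple of $\pm$ an end vector never increases the sign variation of a strict sign sequence beyond $2n-1$, and (2) if adding $\vec u$ preserves the set of sign sequences with $\le 2n-1$ variations and the boundary condition, then $\vec u$ has the claimed form. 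I would also explicitly invoke that $SQ(E) = SQ_{\hat E}$-type symmetry via Lemma \ref{lem:opposite_halfspace} to cut the work for the two endpoints in half, and appeal to Lemma \ref{lem:opposite_halfspace} again to interchange the roles of $\vec e_1$ and $\vec e_{4n-1}$ throughout.
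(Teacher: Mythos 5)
Your overall strategy is the same as the paper's (a direct sign-variation case analysis for the forward inclusion, and test vectors fed into the translation condition for the reverse), but one step fails as written: the symmetry argument for $-\vec{e}_1$. You claim that translating by $-\vec{e}_1$ preserves $\overline{\half}_{\hat E}$ and that a translation preserving a set also preserves its complement. Both claims are false. First, $-\vec{e}_1$ does \emph{not} translate $\overline{\half}_{\hat E}$ into itself: already for $n=1$, the vector with $\hat E$-coordinates $(-1,1,0)$ lies in $\overline{\half}_{\hat E}$, while after adding $-\vec{e}_1$ (i.e.\ $\hat E$-coordinates $(-1,1,-1)$) its lower sign variation is $2>2n-1$. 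Second, if $A+w\subseteq A$ then what follows is $A^c - w\subseteq A^c$, i.e.\ the complement is preserved by the \emph{opposite} translation. The correct reduction is: $-\vec{e}_1+\half_E\subset\half_E$ if and only if $\vec{e}_1+\overline{\half}_{\hat E}\subset\overline{\half}_{\hat E}$, and the latter follows from your $\vec{e}_{4n-1}$-argument applied in the basis $\hat E$, since $\vec{e}_1$ is the \emph{last} vector of $\hat E$. Two smaller inaccuracies in the forward direction: adding a positive multiple of $\vec{e}_{4n-1}$ \emph{can} increase $S^+_E$ (e.g.\ $(-1,-1,-1)\mapsto(-1,-1,1)$ for $n=1$); it increases by at most one and then the last coordinate is positive, so membership in $\half_E$ survives --- your promised case analysis would surface this, but the claim ``$S^+_E$ does not increase'' is wrong as stated. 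Also, closure of $SQ(E)$ under positive scaling is not ``immediate from the definition''; this is harmless only because your verification actually covers arbitrary positive multiples of the generators, after which closure under addition suffices (this is in effect what the paper does by treating $-\alpha\vec{e}_1+\beta\vec{e}_{4n-1}$ directly).

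For the reverse inclusion your plan is in the right spirit but underspecified at exactly the point where the work lies: when you test $\vec{u}+\vec{v}\in\half_E$, the \emph{other} coordinates of $\vec{u}$ also perturb $\vec{v}$, so the test vector must have its remaining nonzero entries very large (so their signs cannot be altered by the fixed $\vec{u}$); the paper's device is to place huge entries, alternating in sign, at $2n$ positions avoiding $i$, with the parity of how many lie before and after $i$ chosen so that $u_i$ ends up sandwiched between two forced-positive entries (giving $u_i\ge 0$) and, in a second configuration, between two forced-negative entries (giving $u_i\le 0$), the point being that a wrong-signed $u_i$ creates two extra sign changes and pushes the variation past $2n-1$. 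Your ``tiny entry at position $i$'' variant can be made to work in the same way, but note that ``the last coordinate used for computing $S^+$'' in the definition refers to the $(4n-1)$-st coordinate, so ``arrange that coordinate $i$ is the last coordinate used'' does not parse; what you actually need is the two-extra-sign-changes mechanism above, plus the open/closed bookkeeping you already flag (the paper sidesteps it by noting that $SQ(E)$ equals the translational semigroup of the closed halfspace). With these repairs your argument becomes essentially the paper's proof.
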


\proof Let $\vec v \in \half_E$ and let $\vec{u} = -\alpha
\vec{e}_1 + \beta
\vec{e}_{4n-1}.$ with $\alpha,\beta \ge 0.$
If $\vec v$ has negative first coordinate and positive last coordinate, so does $\vec v + \vec u,$ and $S_E^+(\vec v + \vec u) = S_E^+(\vec v)$ and therefore $\vec u + \vec v \in \half_E.$

If the first and last coordinates of $\vec v$ are both positive, $S_E^+$ is even, so $S_E^+(\vec v) \ne 2n-1$. Then $\vec v + \vec u$ might have negative first coordinate, in which case $S_E^+(\vec v + \vec u) = S_E^+(\vec v) + 1 \le 2n-1$ with last coordinate positive, so $\vec v + \vec u \in \half_E.$ A similar argument holds when $\vec v$ has negative first and last coordinate.

If the first coordinate of $\vec v$ is positive and the last is negative, $S_E^+(\vec v)$ is odd and not equal to $2n-1,$ so $S_E^+(\vec v) \le 2n-3.$ If adding $\vec u$ to $\vec v$ changes just the sign of first coordinate, the sign change increases by at most $1.$ If adding $\vec{u}$ to $\vec{v}$ changes the sign of both the first and last coordinate, then the sign variation increases by at most $2$. But if it increases by $2$, note that $\vec v + \vec u$ has positive last coordinate and $S^+_E(\vec{v}+\vec{u}) = 2n-1$ is allowed to hold, so $\vec v + \vec u \in \half_E.$

If the first or last coordinate of $\vec{v}$ vanishes, use the sign assigned to that coordinate when computing $S^+_E$ and the same arguments as above.

For the reverse inclusion, let $\vec{u} \in SQ(E)$. Adding large multiples of $-\vec{e}_1$ and $\vec{e}_{4n-1}$, we may assume that $\vec{u}_1 < 0$ and $\vec{u}_{4n-1} > 0$. Note that
\[\vec{a} = a_1 \vec{e}_1 + a_3 \vec{e}_3 - a_4\vec{e}_4 + a_5\vec{e}_5 - \dots -a_{2n+1}\vec{e}_{2n+1} + a_{2n+2}\vec{e}_{2n+2} \in \overline{\half}(E)\]
for any choice of $a_i > 0$. Making these coefficients large enough, $\vec{a} + \vec{u}$ will have $2n-1$ sign changes when only looking at the coordinates $1,3,4,\dots,2n+2$. This implies $\vec{u}_2 \ge 0$. Similarly,
\[\vec{a} = -a_3 \vec{e}_3 + a_4\vec{e}_4 - a_5\vec{e}_5 + \dots - a_{2n+1}\vec{e}_{2n+1} + a_{2n+2}\vec{e}_{2n+2} \in \overline{\half}(E)\]
for any choice of $a_i > 0$. Again making these coordinates very large and considering the sign variation of $\vec{a} + \vec{u}$ yields $u_2 \le 0$ and so $u_2 = 0$. A similar argument using the last $2n+2$ coordinates shows that $u_{4n-2}=0$.

For $2 < i < 4n-2$, choose a subset $I$ of $2n$ indices not containing $i$, with an odd number above $i$ and an odd number after. A vector $\vec{a}$ with nonzero coordinates in these $2n$ places will always have $S^-_E(\vec{a}) \le 2n-1$. Assign very large values, alternating in sign starting with $+$ to the coordinates above $i$, and similarly with the coordinates below $i$. This will give a vector $\vec{a}$ such that $S^-(\vec{a} + \vec{u}) \ge 2n-2$ and $u_i$ is between two positive coordinates, implying $u_i \ge 0$ (otherwise $S^-(\vec{a} + \vec{u}) \ge 2n$ would not be in $\overline{\half}$). Similarly, if we choose an even number of coordinates above and below $i$ we can assign very large values alternating in sign so that $S^-(\vec{a} + \vec{u}) \ge 2n-2$ and $u_i$ is between two negative values. This implies that $u_i\le 0$ and hence $u_i = 0$.

It remains to show that $u_1 \le 0$ and $u_{4n-1} \ge 0$. For the first, again adding a vector
\[\vec{a} = -a_2 \vec{e}_2 + a_3\vec{e}_3 - a_4\vec{e}_4 + ... - a_{2n} + a_{2n+1} \in \overline{\half}\] with very large coordinates alternating in sign, $\vec{u}+\vec{a} \in \overline{\half}$  implies $u_1 \le 0$ otherwise $S^-(\vec{u}+\vec{a}) \ge 2n$. The same argument with the last $2n+2$ coordinates gives $u_{4n-1} \ge 0$.
\endproof

There is an alternate description of a crooked halfspace, similar to crooked halfspaces in \cite{bt2022}, Section 5.2.

For a flag $F \in \Flag^+(\bR^{4n-1}),$ define the \emph{positive half-subspace} as $$F_+^{(2n)} = \{\vec{v} \in \bR^{2n,2n-1} ~|~ F^{(2n-1)} \oplus v = F^{(2n)} \text{ as oriented vector spaces}\}.$$

\begin{prop}[\cite{bt2022}, Proposition 5.15]\label{prop:quadruples}
    Let $F_1, F_2$ be oriented flags and let $E$ be the isotropic basis associated to them so that $F_E = F_1$ and $F_{\hat E} = F_2.$ Then
    $$\half_E = \bigcup_{F \in \ival{F_1}{F_2}}F^{(2n)}_+,$$
    and
    $$\overline{\half}_E = \bigcup_{F \in \overline{\ival{F_1}{F_2}}}\overline{F^{(2n)}_+}.$$
\end{prop}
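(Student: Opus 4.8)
The plan is to prove the two equalities in Proposition \ref{prop:quadruples} by reducing everything to an explicit coordinate computation in the $J$-basis $E$, and then matching the sign-variation description of $\half_E$ with the union of positive half-subspaces $F^{(2n)}_+$ over the interval $\ival{F_1}{F_2}$. First I would fix the basis $E = (\vec e_1,\dots,\vec e_{4n-1})$ so that $F_1 = F_E$ and $F_2 = F_{\widehat E}$, and recall from Proposition \ref{LowerTriangularSpan} that a flag $F$ lies in $\ival{F_1}{F_2}$ precisely when $F = F_U$ for a unipotent lower-triangular totally positive matrix $U$ (whose columns span the successive subspaces $F^{(i)}$); since $F_1, F, F_2$ are all isotropic here, $U \in \SO(2n,2n-1)$. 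The heart of the argument is then to describe $F^{(2n)}_+$ for such an $F$: a vector $\vec v$ lies in $\overline{F^{(2n)}_+}$ iff $\vec v \in F^{(2n)}$ and, writing $\vec v$ in terms of the columns of $U$, the coefficient on the $2n$th column is $\ge 0$ (with strict inequality for the open version), because that is exactly the condition that $F^{(2n-1)} \oplus \vec v = F^{(2n)}$ with matching orientation.

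The key lemma I would isolate is the following sign-variation fact: if $U$ is unipotent lower-triangular totally positive and $\vec v = U \vec c$ where $\vec c = (c_1,\dots,c_{4n-1})$ has $c_1 = \dots = c_{k-1} = 0$ and $c_k \ne 0$ (so that $\vec v \in F^{(k)}$, $\vec v \notin F^{(k-1)}$, with $F = F_U$), then the sign of the first nonzero coordinate of $\vec v$ equals the sign of $c_k$, and more importantly $S^-_E(\vec v) \le k-1$, with equality achievable; dually $S^+_E(\vec v) \ge$ the number of sign changes forced. This is the classical variation-diminishing property of totally positive matrices (the number of sign changes of $U\vec c$ is at most that of $\vec c$), combined with the observation that $\vec c$ supported on coordinates $k, k+1, \dots, 4n-1$ can have at most $4n-1-k$ sign changes. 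Applying this with $k = 2n$: any $\vec v \in \overline{F^{(2n)}_+}$ for $F \in \overline{\ival{F_1}{F_2}}$ satisfies $S^-_E(\vec v) \le 2n-1$, and the "sign of the last nonzero coordinate is positive in case of equality" condition follows because equality $S^-_E(\vec v) = 2n-1$ forces the coordinate pattern to be pinned down, and the positive-half-subspace condition $c_{2n} > 0$ controls the relevant sign. For the reverse inclusion, given $\vec v$ with $S^-_E(\vec v) \le 2n-1$ (and the equality condition), one writes $\vec v$ in a suitable totally positive frame: take $k$ to be the position of the first nonzero coordinate of $\vec v$; one shows there is a lower-triangular totally positive $U$ with $\vec v$ in the span of its first $\max(k, 2n)$ columns and with the $2n$th coefficient nonnegative, so that $\vec v \in \overline{F^{(2n)}_+}$ for $F = F_U \in \overline{\ival{F_1}{F_2}}$. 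The open case is the same argument with strict inequalities throughout.

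The main obstacle I expect is the reverse inclusion, i.e.\ constructing, for a given $\vec v$ with $\le 2n-1$ sign changes, an actual flag $F \in \ival{F_1}{F_2}$ (equivalently a lower-triangular totally positive $U \in \SO(2n,2n-1)$) whose positive half-subspace contains $\vec v$ — one must produce the whole totally positive matrix, not just a single column, and verify the total positivity of all its minors and the isotropy/orthogonality constraints simultaneously. The cleanest route is probably to invoke Proposition \ref{prop:quadruplepositivemap} or the classical fact that any vector with at most $m-1$ sign changes in $\bR^m$ lies in the span of the first columns of some totally positive matrix, then impose the isotropy by Proposition \ref{prop:JBasis}-style rescaling; care is needed at the boundary cases where coordinates of $\vec v$ vanish, handled by the fact that $\overline{\half}_E$ is closed (proved in the preceding lemma) together with a density/limit argument approximating $\vec v$ by vectors with no zero coordinates. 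A secondary technical point is bookkeeping the orientation in the direct sum $F^{(2n-1)} \oplus \vec v = F^{(2n)}$, which Lemma \ref{lem:orientflags} ensures is well-defined and compatible across the different flags in the interval, so the union is taken consistently.
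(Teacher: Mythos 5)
This proposition is imported verbatim from \cite{bt2022} (Proposition 5.15); the paper you were given contains no proof of it, so your attempt can only be compared with that source in spirit. Your overall strategy---coordinatize via Proposition \ref{LowerTriangularSpan}, so that flags of $\ival{F_1}{F_2}$ are spanned by the columns of unipotent lower-triangular totally positive matrices, and then match sign variations---is the natural one and is the right circle of ideas. But as written the argument has real gaps, beginning with bookkeeping: if $F=F_U$ and $\vec v = U\vec c$, then $\vec v \in F^{(k)}$ means $c_{k+1}=\dots=c_{4n-1}=0$, not $c_1=\dots=c_{k-1}=0$, and the relevant support for $F^{(2n)}_+$ is $\{1,\dots,2n\}$ with $c_{2n}>0$; your key lemma is stated for the wrong support. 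Also, the union in the proposition runs over \emph{all} oriented flags in $\ival{F_1}{F_2}$, not only isotropic ones, so the claim that $U\in\SO(2n,2n-1)$ is a misreading (harmless for the easy inclusion, but it would make the hard inclusion needlessly harder).

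The substantive gaps are two. First, the variation-diminishing property you invoke does not do the work: a unipotent lower-triangular totally positive $U$ is only totally nonnegative, and for such matrices one gets $S^-_E(U\vec c)\le S^-_E(\vec c)$, which at best yields the closed inclusion without its tie-breaking clause; the open halfspace is defined through $S^+_E$ together with the sign of the last coordinate used in the count, and neither is controlled by the inequality you quote (note $S^+_E(\vec c)$ is useless here, e.g.\ $S^+_E(\vec e_{2n})=4n-2$). What is actually needed is a Gantmacher--Krein--type statement for the span of the first $2n$ columns of $U$ (all of whose maximal minors are positive): every nonzero vector of that span has $S^+_E\le 2n-1$, together with an analysis of the equality case identifying the last sign with the sign of the coefficient $c_{2n}$; none of this is established in your sketch, and ``the open case is the same with strict inequalities'' conflates $S^+$ with $S^-$. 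Second, the reverse inclusion---which is the actual content of the proposition---is only gestured at: given $\vec v$ with $S^+_E(\vec v)\le 2n-1$ and the tie-breaking sign, you must construct a complete oriented flag $F$ with $(F_1,F,F_2)$ positive and $\vec v$ in the \emph{open} positive half of $F^{(2n)}$, not merely exhibit some totally positive $2n$-plane containing $\vec v$; your ``span of the first $\max(k,2n)$ columns'' condition is not the right one, and Proposition \ref{prop:quadruplepositivemap} does not obviously produce such an $F$. Until that construction (and the limiting argument for the closed case) is carried out, the proof is incomplete.
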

We will also denote $\half_E = \half(F_E, F_{\hat E}),$ and $\mathcal C_E = \mathcal C(F_E, F_{\hat E}).$ Note that $\half_{\hat E} = \half(F_{\hat E}, F_E).$ It also follows from this definition that $g\half_E = \half_{gE}.$
\begin{prop}
    Let $F, G, G', F'$ be a positive quadruple of flags. Then we have:
    \begin{enumerate}
        \item $\overline{\half(G, G')} \subset \half(F,F')$ and 
        \item $\overline{\half}(F, G)\cap\overline{\half}(G', F') = \{0\}$, and in particular $\half(F, G)$ is disjoint from $\half(G', F').$
    \end{enumerate}
    
\end{prop}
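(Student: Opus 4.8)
The plan is to deduce both statements from the union description in Proposition \ref{prop:quadruples} together with the properness of the cyclic order on flags (Lemma \ref{lem:intervals_proper}). First I would prove (1). Since $(F,G,G',F')$ is a positive quadruple, Lemma \ref{lem:intervals_proper} gives $\overline{\ival{G}{G'}} \subset \ival{F}{F'}$. By Proposition \ref{prop:quadruples}, $\overline{\half(G,G')} = \bigcup_{H \in \overline{\ival{G}{G'}}} \overline{H_+^{(2n)}}$ and $\half(F,F') = \bigcup_{H \in \ival{F}{F'}} H_+^{(2n)}$. So it suffices to check that for each $H$ in the closed interval $\overline{\ival{G}{G'}}$, which is an interior point of $\ival{F}{F'}$, we actually get the \emph{open} positive half-subspace $H_+^{(2n)}$ appearing in $\half(F,F')$, and that the closure $\overline{H_+^{(2n)}}$ is still contained in $\half(F,F')$. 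The first point is immediate since $H \in \ival{F}{F'}$. For the closure: a nonzero vector $v \in \overline{H_+^{(2n)}}$ either lies in $H_+^{(2n)}$ or in $H^{(2n-1)}$. In the latter case I would pick flags $H^-, H^+ \in \ival{F}{F'}$ with $(F,H^-,H,H^+,F')$ positive (possible because $H$ is in the \emph{open} interval $\ival{F}{F'}$, using that $\overline{\ival{G}{G'}}$ is strictly inside), and argue that $v$, lying in $H^{(2n-1)}$, lies in $K_+^{(2n)}$ for a nearby flag $K \in \ival{H^-}{H} \subset \ival{F}{F'}$; concretely, in an adapted $J$-basis for $(F,F')$ one sees that $H^{(2n-1)} \setminus \{0\}$ is covered by the open half-subspaces of flags slightly on the $F$-side of $H$. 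Hence $v \in \half(F,F')$.

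For (2), the key observation is that $\overline{\half}(F,G)$ and $\overline{\half}(G',F')$ are subsets of opposite closed crooked halfspaces after a suitable choice of basis. I would first reduce to the case where the quadruple is $(F_E, G, G', F_{\widehat E})$ for a single isotropic basis $E$; this is not quite automatic since $\half(F,G)$ uses the basis adapted to the pair $(F,G)$ rather than $(F,F')$, so instead I would argue directly. Choose the basis $E$ adapted to $(F, F')$ via Proposition \ref{prop:JBasis}, so $F = F_E$, $F' = F_{\widehat E}$, and $G, G' \in \ival{F_E}{F_{\widehat E}}$ with $(F_E, G, G', F_{\widehat E})$ positive. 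By Proposition \ref{prop:quadruples} applied to the pair $(F_E, F_{\widehat E})$, we have $\half_E = \half(F_E, F_{\widehat E}) = \bigcup_{H \in \ival{F_E}{F_{\widehat E}}} H_+^{(2n)}$, and $\overline{\half(F,G)} = \bigcup_{H \in \overline{\ival{F}{G}}} \overline{H_+^{(2n)}}$. Now $\overline{\ival{F}{G}}$ and $\overline{\ival{G'}{F'}}$ are disjoint subintervals of $\ival{F}{F'}$ — indeed $\overline{\ival{F}{G}}$ lies on the $F$-side and $\overline{\ival{G'}{F'}}$ on the $F'$-side, separated by the open interval $\ival{G}{G'}$, and moreover the closed interval $\overline{\ival{F}{G}}$ does not contain $F'$ nor does $\overline{\ival{G'}{F'}}$ contain $F$. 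The main step is then to show: if $H_1 \in \overline{\ival{F}{G}}$ and $H_2 \in \overline{\ival{G'}{F'}}$, then $\overline{(H_1)_+^{(2n)}} \cap \overline{(H_2)_+^{(2n)}} = \{0\}$.

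For this last claim I would pass to coordinates. Pick a $J$-basis adapted to the positive triple $(F, H_1, F')$ as in Proposition \ref{LowerTriangularSpan}, respectively $(F, H_2, F')$; using Proposition \ref{prop:quadruplepositive} and the fact that $(F, H_1, H_2, F')$ is positive, there is a single oriented basis in which $H_1$ is spanned by the columns of a unipotent lower-triangular totally positive $L_1$ and $H_2$ by the columns of $S L_2 S$ with $S = \diag(1,-1,\dots,-1,1)$ and $L_2$ lower-triangular totally positive. A vector in $\overline{(H_1)_+^{(2n)}}$ is a nonnegative combination $\sum_{i \le 2n} c_i (\text{col}_i L_1)$ with $c_{2n} \ge 0$, which one checks has at most $2n-1$ lower sign variations (all columns of a lower-triangular totally positive matrix have controlled sign patterns), with the sign convention forcing it into $\overline{\half}_E$; symmetrically a vector in $\overline{(H_2)_+^{(2n)}}$ lands in $\overline{\half}_{\widehat E}$. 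By Lemma \ref{lem:opposite_halfspace}, $\overline{\half}_E$ and $\overline{\half}_{\widehat E}$ are the closed halfspaces of opposite bases, whose intersection is contained in the crooked hyperplane $\mathcal C_E$; a short sign-variation argument then pins the intersection down to $\{0\}$ (a nonzero vector with exactly $2n-1$ variations and positive last nonzero coordinate in $E$ has, in $\widehat E$, more than $2n-1$ variations or the wrong terminal sign).

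\textbf{Main obstacle.} I expect the genuine difficulty to be in part (1), specifically controlling the boundary: proving that $\overline{H_+^{(2n)}} \subset \half(F,F')$ rather than merely $\subset \overline{\half}(F,F')$. The inclusion of \emph{interiors} of intervals handles the "interior" directions $H_+^{(2n)}$ cleanly, but a vector lying in the degenerate locus $H^{(2n-1)}$ must be exhibited as $K_+^{(2n)}$ for some other flag $K \in \ival{F}{F'}$, and making this flag-theoretic rather than purely coordinate-wise is the delicate point. The cleanest route is probably to do the whole argument in a fixed $J$-basis adapted to $(F,F')$ and translate everything into sign-variation statements, at which point (1) becomes the assertion that the open crooked halfspace $\half_E$ contains the closure of the sub-halfspace cut out by the strictly-interior subinterval $\overline{\ival{G}{G'}}$ — a statement about sign variations of columns of totally positive matrices that should follow from the estimates already used in Lemma \ref{lem:middle_entry_big} and the proof of the preceding proposition.
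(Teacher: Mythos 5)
Your part (1) follows the same route as the paper (Lemma \ref{lem:intervals_proper} plus the union description of Proposition \ref{prop:quadruples}), and your attention to the boundary vectors in $H^{(2n-1)}$ is if anything more careful than the paper, which treats that point as immediate; the sign-variation route you sketch at the end is the sound way to settle it. Your reduction of part (2) to the claim that $\overline{(H_1)_+^{(2n)}}\cap\overline{(H_2)_+^{(2n)}}=\{0\}$ for $H_1\in\overline{\ival{F}{G}}$, $H_2\in\overline{\ival{G'}{F'}}$ is also exactly the right reduction. The problem is your proof of that claim.

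First, Proposition \ref{prop:quadruplepositive} does not say what you use: applied to the positive quadruple $(F,H_1,H_2,F')$ it produces a basis in which $F=F_E$ and the \emph{third} flag $H_2$ equals $F_{\widehat E}$, with $F'$ (not $H_2$) represented by $SLS$; the $SLS$-parametrization describes flags in the \emph{opposite} interval $\ival{F'}{F}$. In the basis $E$ adapted to $(F,F')$, the flag $H_2$ lies in $\overline{\ival{G'}{F'}}\subset\overline{\ival{F}{F'}}$, so by Proposition \ref{prop:quadruples} one gets $\overline{(H_2)_+^{(2n)}}\subset\overline{\half}_E$ -- the \emph{same} closed halfspace as $\overline{(H_1)_+^{(2n)}}$, not the opposite one. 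Indeed your intermediate claim is false: taking $H_2=G'$, a positively oriented vector of $(G')^{(1)}$ has all coordinates positive in the basis $E$, hence lies in $\half_E$ and therefore outside $\overline{\half}_{\widehat E}$ (which by Lemma \ref{lem:opposite_halfspace} is the complement of $\half_E$), yet it belongs to $\overline{(G')_+^{(2n)}}$. Second, even if the two families did sit in opposite closed halfspaces, the endgame fails: $\overline{\half}_E\cap\overline{\half}_{\widehat E}$ is the entire crooked hyperplane $\mathcal C_E$, not something close to $\{0\}$; for instance every nonzero vector of $F_E^{(2n-1)}$ lies in it, and such vectors have $S^-_E=0$, so your parenthetical argument about vectors with exactly $2n-1$ variations does not reach them. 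So disjointness cannot be obtained by placing the two sets in opposite halfspaces of one basis. The paper's mechanism is pointwise and avoids all of this: any pair $X\in\overline{\ival{F}{G}}$, $Y\in\overline{\ival{G'}{F'}}$ is oriented-transverse, and in a basis adapted to the pair $(X,Y)$ one has $X^{(2n)}\cap Y^{(2n)}=\langle e_{2n}\rangle$, where $e_{2n}$ is the positive direction for $X_+^{(2n)}$ but the negative one for $Y_+^{(2n)}$; hence $\overline{X_+^{(2n)}}\cap\overline{Y_+^{(2n)}}=\{0\}$, and the union description of Proposition \ref{prop:quadruples} concludes.
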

\proof \begin{enumerate}
    \item By Lemma \ref{lem:intervals_proper}, $\ival{G}{G'} \subset \ival{F}{F'}$ and so the claim follows from Proposition \ref{prop:quadruples}.
    \item Any pair of flags $X\in \overline{\ival{F}{G}}$, $Y\in \overline{\ival{G'}{F'}}$ is oriented-transverse. This implies that the subspaces $\overline{X_+^{(2n)}}$ and $\overline{Y_+^{(2n)}}$ intersect only in $0$. Indeed, if $E$ is a positive basis such that $F_E = X$ and $F_{\widehat{E}} = Y$, then $X^{(2n)} \cap Y^{(2n)} = \langle \vec{e}_{2n} \rangle,$ with $\vec{e}_{2n} \in X_+^{(2n)}$ but not in $Y_+^{2n}.$ 
\end{enumerate} \endproof 

We now construct a fundamental domain for $(\rho(\Gamma), u)$ acting on $\bR^{2n,2n-1},$ with $u$ defined by an arc system $\mA$
as in Section \ref{construction}. Our strategy is to use appropriately translated crooked hyperplanes to bound the domain.

We will mimic the construction of a cocycle from Section \ref{construction}. Let $a \in \tilde \mA$ be an arc in $\bH^2.$ Let $p_a$ be a point in the region of $\bH^2 - \tilde \mA$ that borders $a$ and is separated from the basepoint $p_0$ by $a.$ Let $c_a \colon [0,1] \to \bH^2$ be a path with $c(0) = p_0$ and $c(1) = p_a,$ with no self-intersections along any arc in $\tilde \mA.$ Denote $X_{c_a} = c_a([0,1])\cap \tilde \mA,$ and let $\sigma(x) = 1$ if at $x$ the path $c_a$ crosses the arc $a_x$ positively, and $-1$ otherwise. Additionally, let $y \in X_{c_a}$ be the last intersection of $c_a([0,1])$ and $\tilde \mA.$ 
Define 
$$\tilde u(a) = \sum_{x\in X_{c_a}} \sigma(x)(
\vec v_{a_x}^+ - 
\vec v_{a_x}^-) - \frac12 \sigma(y)(
\vec v_{a_y}^+ - 
\vec v_{a_y}^-) .$$
Intuitively, $\tilde u(a)$ is the displacement vector between the region containing $p_0$ and the region containing $p_a.$ This definition is very similar to the definition of $u(\gamma)$ in \ref{def:cocycle}, but here $p_a$ is not necessarily a translate by $\Gamma$ of $p_0,$ and the very last ``contribution" to $\tilde u (a)$ is only counted with a factor of $\frac 12$.

For each $a \in \tilde \mA,$ define a translated crooked hyperplane $$\mathcal C(a) = \mathcal C(\xi(a^+), \xi(a^-)) + \tilde u(a)$$
and the translated crooked halfspace $$\half(a) = \half(\xi(a^+), \xi(a^-)) + \tilde u(a).$$

\begin{lem}\label{lem:disjointness}
    Let $a, a'$ be different arcs in $\tilde \mA.$ Then $\mC(a)$ and $\mC(a')$ are disjoint.
\end{lem}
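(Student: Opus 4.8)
The plan is to reduce the disjointness of the two translated crooked hyperplanes $\mathcal{C}(a)$ and $\mathcal{C}(a')$ in $\bR^{4n-1}$ to the disjointness of two crooked halfspaces attached to a positive quadruple of flags, exactly as in Proposition~\ref{prop:quadruples}'s setting. First I would observe that it suffices to show $\overline{\half}(a) \cap \overline{\half}(a') = \{0\} + (\text{some common translate})$; more precisely, after passing to a common point of reference it suffices to show that the closed halfspaces are disjoint except possibly at a single point, since $\mathcal{C}(a) \subset \overline{\half}(a)$ and $\mathcal{C}(a') \subset \overline{\half}(a')$, and the crooked hyperplanes are the topological boundaries. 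The key geometric input is that for distinct arcs $a, a' \in \tilde\mA$, the four flags $\xi(a^+), \xi(a^-), \xi(a'^+), \xi(a'^-)$ form a positive (cyclically ordered) $4$-tuple in some order — this follows from the positivity of the extended boundary map $\xi$ on $\Lambda_\Gamma \cup \Lambda_\mA$ together with the fact that the endpoints $a^\pm, a'^\pm$ on $\partial_\infty \bH^2$ are cyclically ordered because $a$ and $a'$ are disjoint properly embedded arcs (so their lifts are disjoint geodesics, whose endpoints do not interleave in a bad way).

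Next I would handle the translational parts $\tilde u(a)$ and $\tilde u(a')$. Writing $c_a$ and $c_{a'}$ for the defining paths and comparing them along a common initial segment, the difference $\tilde u(a) - \tilde u(a')$ is a sum of contributions $\sigma(x)(\vec{v}_{a_x}^+ - \vec{v}_{a_x}^-)$ over arcs $a_x$ crossed by exactly one of the two paths, plus the half-contributions at the terminal crossings $y, y'$. The point is that each such contribution lies in the stem-quadrant of the appropriate halfspace: by the Proposition describing $SQ(E)$ as the cone on $-\vec{e}_1$ and $\vec{e}_{4n-1}$, and by Lemma~\ref{Positivity}-type positivity arguments, a vector of the form $\vec{v}_{a_x}^+ - \vec{v}_{a_x}^-$ (which lies in the cone spanned by positive vectors of $\xi(a_x^+)^{(1)}$ and $-\xi(a_x^-)^{(1)}$) translates $\half(\xi(a^+),\xi(a^-))$ into itself whenever the arc $a_x$ lies on the correct side — i.e., separates $p_0$ (or $p_{a'}$) from $a$. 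So the net effect of the translation difference is to push one halfspace strictly inside the region bounded by the other. Concretely, I expect to show that after translating everything by $-\tilde u(a)$, we have $\half(a') - \tilde u(a) \subset \half(\xi(a'^+),\xi(a'^-)) + (\text{stem-quadrant vector})$, which by the quadruple-disjointness Proposition is disjoint from $\half(\xi(a^+),\xi(a^-))$.

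The cleanest organization is: (1) establish that $\bigl(\xi(a^+),\xi(a^-),\xi(a'^+),\xi(a'^-)\bigr)$ (suitably ordered) is a positive quadruple; (2) express $\tilde u(a) - \tilde u(a')$ as a sum of stem-quadrant contributions with consistent signs, using that the arcs are disjoint so the crossing pattern is coherent; (3) invoke the stem-quadrant description to conclude that one translated halfspace sits inside a half-subspace-sweep $\bigcup_{F \in \ival{\cdot}{\cdot}} F^{(2n)}_+$ that is nested strictly inside the complement of the other, and apply part (2) of the Proposition following Proposition~\ref{prop:quadruples} to get empty intersection of the hyperplanes. The main obstacle I anticipate is step (2): carefully checking that every arc contributing to $\tilde u(a) - \tilde u(a')$ really does lie on the ``correct'' side relative to both $a$ and $a'$ so that its contribution lands in the stem-quadrant of the relevant halfspace, and handling the factor-$\tfrac12$ terminal contributions at $y$ and $y'$ — these are precisely what guarantee the translated hyperplane $\mathcal{C}(a)$ lands strictly between the walls rather than on them. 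This is a combinatorial bookkeeping argument about the dual tree of the arc system $\tilde\mA$, and getting the signs exactly right (rather than up to sign) is where the care is needed.
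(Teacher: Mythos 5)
Your overall architecture is the same as the paper's: use positivity of the quadruple of flags coming from the cyclic order of the endpoints of the two disjoint arcs, the nesting of crooked halfspaces it gives (via Lemma \ref{lem:intervals_proper} and Proposition \ref{prop:quadruples}), and stem-quadrant translations to absorb the difference of the translational parts, concluding that $\mC(a')$ lands in the open halfspace bounded by $\mC(a)$. However, the mechanism you propose for step (2) contains a genuine error: you claim that a contribution $\vec v_{a_x}^+ - \vec v_{a_x}^-$ from an intermediate arc $a_x$ translates the fixed halfspace $\half(\xi(a^+),\xi(a^-))$ into itself whenever $a_x$ lies on the correct side. This is false. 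By the stem-quadrant proposition, the set of vectors translating $\half(F,G)$ into itself is \emph{exactly} the two-dimensional cone spanned by $-f$ and $g$, where $f,g$ are positive vectors in $F^{(1)},G^{(1)}$; for the halfspace attached to $a$ this cone is spanned by rays in the isotropic lines $\xi(a^+)^{(1)}$ and $\xi(a^-)^{(1)}$, whereas $\vec v_{a_x}^+ - \vec v_{a_x}^-$ lies in the cone on $\xi(a_x^+)^{(1)}$ and $-\xi(a_x^-)^{(1)}$, which involves different lines and is generically not contained in that two-dimensional cone. So the plan of writing $\tilde u(a)-\tilde u(a')$ as a single sum of vectors all lying in the stem quadrant of one fixed halfspace cannot be carried out.

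The fix, which is what the paper does, is iterative rather than one-shot: first treat two arcs bounding a common region, where the difference of translational parts is $\tfrac12(\vec v_a^+-\vec v_a^-)+\tfrac12(\vec v_{a'}^+-\vec v_{a'}^-)$, each half-contribution lying in the stem quadrant of \emph{its own} arc's halfspace; combine $\overline{\half(\xi(a'^-),\xi(a'^+))}+\vec v' \subset \overline{\half(\xi(a'^-),\xi(a'^+))}$ with the containment $\overline{\half(\xi(a'^-),\xi(a'^+))} \subset \half(\xi(a^-),\xi(a^+))$ coming from positivity of the quadruple, and then absorb $\vec v$ using the stem quadrant of the $a$-halfspace. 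For non-consecutive arcs one then inducts along the chain of consecutive arcs separating $a$ from $a'$, pushing the innermost halfspace into itself at each stage and passing through the nested halfspaces of the intermediate arcs. Also, a smaller point: your reduction ``$\overline{\half}(a)\cap\overline{\half}(a')=\{0\}$ plus a common translate'' is not the right target once the two halfspaces are translated by different vectors; the statement to aim for is that $\mC(a')$ is contained in the \emph{open} halfspace $\half(\xi(a^-),\xi(a^+))+\tilde u(a)$, which is disjoint from its boundary $\mC(a)$.
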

\proof Consider first the case when $a$ and $a'$ bound a common region, and assume without loss of generality that $a'$ is oriented away from the basepoint, in the same direction and further from it than $a$. We can write the translated crooked hyperplane as $\mC(a) = \mC(\xi(a^+), \xi(a^-)) + \tilde u(a)$ and 
$$\mC(a') = \mC(\xi(a'^+), \xi(a'^-)) + \tilde u(a) + \frac12(
\vec{v}_{a}^+ - 
\vec v_{a}^-)+ \frac12(
\vec v_{a'}^+ - 
\vec v_{a'}^-).$$ 
Denote $\vec v = 
\frac12(\vec v_{a}^+ - 
\vec v_{a}^-$) and $\vec v' = \frac12 (
\vec v_{a'}^+ - 
\vec v_{a'}^-).$ Note it is therefore enough to show that $\mC(\xi(a^+), \xi(a^-))$ and $\mC(\xi(a'^+), \xi(a'^-)) +\vec v + \vec v'$ are disjoint. 

By \ref{prop:quadruples}, $\half(\xi(a'^-), \xi(a'^+)) \subset \half(\xi(a^-), \xi(a^+)).$ As $\vec v'$ is in the stem-quadrant of $\half(\xi(a'^-), \xi(a'^+)),$ we have $\overline{\half(\xi(a'^-), \xi(a'^+))} + \vec v' \subset \overline{\half(\xi(a'^-), \xi(a'^+))} .$ Similarly, $\vec v$ is in the stem quadrant of $\half(\xi(a^-),\xi(a^+))$. Therefore \begin{eqnarray*}
 \mC((\xi(a'^-), \xi(a'^+)) &=& \partial (\overline{\half(\xi(a'^-), \xi(a'^+))} +\vec v'+ \vec v\\
&\subset & \overline{\half(\xi(a'^-), \xi(a'^+))} + \vec v' +\vec v \\
 &\subset & \overline{\half(\xi(a'^-), \xi(a'^+))} + \vec v \\ 
 & \subset & \half(\xi(a^-), \xi(a^+)) + \vec v \subset \half(\xi(a^-), \xi(a^+)),
\end{eqnarray*}
with $\half(\xi(a^-), \xi(a^+))$ being disjoint from $\mC(\xi(a^-), \xi(a^+))$.

We can show the general statement of the lemma for non-consecutive arcs inductively.
\endproof

For $F,G$ a pair of oriented, transverse isotropic flags, define
\[\ival{F}{G}^{\perp} = \{ X \in \ival{F}{G} ~|~ X \text{ is isotropic.}\}\]

 Recall  Definition \ref{def:neutral_vec} of $\vec x^0(X, Y)$ for $X$ and $Y$ isotropic; it is the normalized middle vector of the basis adapted to $X$ and $Y.$ Note that it suffices for the middle vector of this basis to be spacelike to make this definition. We will thus extend $\vec x^0 $ to pairs of transverse flags where one is isotropic, landing in unit spacelike vectors. 

\begin{defn}
    
For $Y$ an isotropic flag and $X$ transverse to $Y$, define $\vec x^0(X, Y)^*: \bR^{4n-1} \to \bR$ by writing $$\vec v =\vec x + \left(\vec{x}^0(X, Y)^*(\vec v)\right)\vec{x}^0(X, Y) + \vec y $$ with $\vec x \in X^{(2n-1)}, \vec y \in Y^{(2n-1)}.$\end{defn}
As the notation suggests, this function associates to $\vec{v}$ the coefficient of $\vec{x}^0(X,Y)$ in a positive basis adapted to the pair $X,Y$ extending this unit spacelike vector.

\begin{lem}\label{lem:uniform_bound_coordinate}
    Let $F_1,F_2,F_3,G_3,G_2,G_1$ be a positive $6$-tuple of isotropic oriented flags, and fix $f,g$ positive vectors in the $1$-dimensional parts of $F_2,G_2$ respectively. Then, there is a uniform lower bound
    \[\vec{x}^0(X,Y)^*(f-g) > m_{F_1,F_3,G_3,G_1}^{f,g}\]
    for some constant $m_{F_1,F_3,G_3,G_1}^{f,g}>0$, where $X\in \ival{F_3}{G_3}$ and $Y\in\ival{G_1}{F_1}^\perp$. Moreover, this bound is invariant under the simultaneous action of $\SO(2n,2n-1)$ on $F_1,F_3,G_1,G_3,f,g$.
\end{lem}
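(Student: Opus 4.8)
The plan is to reduce the claimed inequality to a compactness argument combined with the pointwise positivity already established in Lemma \ref{Positivity}. First I would fix the positive $6$-tuple $F_1,F_2,F_3,G_3,G_2,G_1$ and the positive vectors $f,g$, and observe that by Lemma \ref{lem:intervals_proper} (properness of the partial cyclic order), the closures $\overline{\ival{F_3}{G_3}}$ and $\overline{\ival{G_1}{F_1}^\perp}$ are contained in the open interval $\ival{F_2}{\widehat{F_2}\text{-side}}$; more precisely, any $X\in\overline{\ival{F_3}{G_3}}$ and $Y\in\overline{\ival{G_1}{F_1}^\perp}$ form, together with $F_2$ and $G_2$, a positive quadruple $(F_2,X,G_2,Y)$ — this is the key structural input. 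Indeed, $(F_1,F_2,F_3,X,G_3,G_2,G_1)$ is positive by Lemma \ref{lem:cyclic_triple_intervals} when $X\in\ival{F_3}{G_3}$, and similarly inserting $Y\in\ival{G_1}{F_1}$ keeps positivity, and then extracting the sub-quadruple $(F_2,X,G_2,Y)$ gives a positive quadruple of flags with $Y$ isotropic.

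Next I would identify the quantity $\vec{x}^0(X,Y)^*(f-g)$ with the Margulis-type inner product from Lemma \ref{Positivity}: namely, writing $\vec{x}^0(X,Y)$ for the unit spacelike neutral vector of the pair $(X,Y)$ (using the remark that this only needs the middle vector of an adapted basis to be spacelike), the function $\vec{x}^0(X,Y)^*$ applied to a vector $\vec v$ equals $\vec v \bilin \vec{x}^0(X,Y)$ up to the normalization $\vec{x}^0(X,Y)\bilin\vec{x}^0(X,Y)=1$; in fact by the decomposition defining $\vec x^0(X,Y)^*$ and isotropy of $X^{(2n-1)},Y^{(2n-1)}$ with respect to $\bilin$, we get exactly $\vec{x}^0(X,Y)^*(\vec v) = \vec v \bilin \vec{x}^0(X,Y)$. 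Since $f-g$ lies in the cone spanned by the positive half-line in $F_2^{(1)}$ and the negative half-line in $G_2^{(1)}$, and $(F_2,X,G_2,Y)$ is a positive quadruple with $Y$ playing the role of $g^-$ and $X$ of $F$ (after orienting appropriately so that the quadruple $g^+=Y$... — more directly, I would apply Lemma \ref{Positivity} with the regular element replaced by the pair $(X,Y)$, noting its proof only uses that an adapted $J$-basis to the positive quadruple exists and that $\vec x^0$ is the middle basis vector), we conclude $\vec{x}^0(X,Y)^*(f-g) > 0$ for every admissible pair $(X,Y)$.

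Then I would run the compactness argument: the set of admissible pairs $(X,Y)$ ranges over $\overline{\ival{F_3}{G_3}}\times\overline{\ival{G_1}{F_1}^\perp}$, which is compact (closed intervals of flags are compact, being closed subsets of the compact flag variety, and the isotropic condition is closed). The map $(X,Y)\mapsto \vec{x}^0(X,Y)^*(f-g)$ is continuous in $(X,Y)$ — this needs a brief justification that $\vec{x}^0(X,Y)$ depends continuously on a transverse pair, which follows because an adapted $J$-basis can be chosen continuously (locally) and the middle vector is then continuous, with the sign ambiguity resolved uniquely by Lemma \ref{lem:orientflags}. A continuous strictly positive function on a nonempty compact set attains a positive minimum; call it $m_{F_1,F_3,G_3,G_1}^{f,g}$. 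Finally, for the invariance claim: if $h\in\SO(2n,2n-1)$, then $h$ sends the admissible pair set for $(F_1,F_3,G_3,G_1,f,g)$ bijectively onto that for $(hF_1,hF_3,hG_3,hG_1,hf,hg)$, and $\vec{x}^0(hX,hY) = h\,\vec{x}^0(X,Y)$ (since $h$ preserves $\bilin$, hence preserves the normalization and the positive orientation), so $\vec{x}^0(hX,hY)^*(hf-hg) = (hf-hg)\bilin h\vec{x}^0(X,Y) = (f-g)\bilin \vec{x}^0(X,Y) = \vec{x}^0(X,Y)^*(f-g)$; thus the minimum is unchanged. The main obstacle I anticipate is bookkeeping the orientations carefully enough to invoke Lemma \ref{Positivity} verbatim — specifically checking that the positive quadruple $(F_2,X,G_2,Y)$ matches the hypothesis configuration $(g^+,F,g^-,G)$ with the correct assignment of which flag is attracting versus repelling, and that $f-g$ really lies in the cone spanned by the positive half-line of $F_2^{(1)}$ and the \emph{negative} half-line of $G_2^{(1)}$ rather than the positive one; once the sign conventions are pinned down, the rest is the standard "continuous positive function on a compact set" argument.
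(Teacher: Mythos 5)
Your overall skeleton is the same as the paper's: pointwise positivity of $(X,Y)\mapsto \vec{x}^0(X,Y)^*(f-g)$ via Lemma \ref{Positivity}, then compactness of $\overline{\ival{F_3}{G_3}}\times\overline{\ival{G_1}{F_1}^\perp}$ plus continuity to extract a positive minimum (your use of properness, Lemma \ref{lem:intervals_proper}, to keep positivity on the closures plays the role of the paper's remark about slightly larger intervals), and $\SO(2n,2n-1)$-equivariance for the invariance of the constant. That part is fine.

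The genuine soft spot is the bridge from Lemma \ref{Positivity} to the coefficient functional. You claim $\vec{x}^0(X,Y)^*(\vec v)=\vec v\bilin \vec{x}^0(X,Y)$ ``by isotropy of $X^{(2n-1)},Y^{(2n-1)}$'', but $X$ ranges over $\ival{F_3}{G_3}$, \emph{not} over $\ival{F_3}{G_3}^\perp$: only $Y$ is required to be isotropic. Isotropy of $Y$ kills the $Y^{(2n-1)}$-component in the pairing, but nothing forces $X^{(2n-1)}$ to be $\bilin$-orthogonal to $\vec{x}^0(X,Y)$, so the identity you rely on is unjustified (and in general false) for non-isotropic $X$. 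Your fallback --- rerun the proof of Lemma \ref{Positivity} ``noting it only uses an adapted $J$-basis'' --- has the same problem: a $J$-basis adapted to the pair $(X,Y)$ exists only when both flags are isotropic (Proposition \ref{prop:JBasis}). The correct repair is to work with the functional directly: take an adapted \emph{oriented} basis for the positive quadruple containing the pair $(X,Y)$ as in Proposition \ref{prop:quadruplepositive}; in such a basis $\vec{x}^0(X,Y)^*$ is, up to a positive scalar, the $2n$-th coordinate, and the coordinate/sign computation from the proof of Lemma \ref{Positivity} (via Proposition \ref{LowerTriangularSpan}) goes through without ever invoking the bilinear form on the $X$-side.

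Second, you explicitly defer the slot-matching with Lemma \ref{Positivity}, and that is exactly where the content sits, so the proposal as written does not close the argument. From the insertions you correctly obtain the positive quadruple $(F_2,X,G_2,Y)$, equivalently $(X,G_2,Y,F_2)$; note that relative to the pair $(X,Y)$ this places $G_2$ in the ``$F$'' slot and $F_2$ in the ``$G$'' slot of Lemma \ref{Positivity}, not the other way around (the configuration $(X,F_2,Y,G_2)$ is the cyclic reversal of what you derived, hence is \emph{not} positive). So one must actually carry out the computation of which of $f-g$ or $g-f$ has positive middle coordinate in a basis adapted to $(X,Y)$, keeping track of the orientation convention for $\vec{x}^0$ (recall $\vec{x}^0(Y,X)=-\vec{x}^0(X,Y)$, so swapping the roles of the two flags flips the sign). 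Until this is pinned down, the pointwise inequality --- the one input the compactness argument cannot supply --- is not established.
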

\begin{proof}
    By Lemma \ref{Positivity}, $\vec{x}^0(X,Y)^*(f-g) > C$.

    Now, the closures $\overline{\ival{G_1}{F_1}}^\perp$ and $\overline{\ival{F_3}{G_3}}$ are compact and so the continuous function $(X,Y)\mapsto \vec{x}^0(X,Y)^*(f-g)$ has a positive minimum value $m_{F_1,F_3,G_3,G_1}^{f,g}$. This minimum value cannot be $0$ as the function is also positive on a pair of slightly larger intervals respectively containing $\overline{\ival{G_1}{F_1}}$ and $\overline{\ival{F_3}{G_3}}$ in their interiors.

    Let $A\in \SO(2n,2n-1)$. Then, $\vec{x}^0(AX,AY)^*(Af-Ag) = \vec{x}^0(X,Y)^*(f-g)$. It follows that the minimum found above satisfies $m_{AF_1,AF_3,AG_3,AG_1}^{Af,Ag}=m_{F_1,F_3,G_3,G_1}^{f,g}$.
\end{proof}

\begin{lem}\label{lem:uniform_on_surface}
    Let $\rho: \Gamma \to \SO(2n,2n-1)$ be a positive Anosov representation of a free group $\Gamma < \PSL_2(\bR)$ and let $\mA$ be a filling system of arcs on $ \bH^2/\Gamma.$ Let $a_1, a_2, a_3$ be three consecutive arcs in $\bH^2$ that are lifts of arcs of $\mA.$ Then for all $X \in ((\xi(a_3^+), \xi(a_3^-)))$ and $Y \in((\xi(a_1^-), \xi(a_1^+)))$, the estimate  $\vec x^0(X, Y)^*(
    v^+_{a_2} - 
    v^-_{a_2}) \geq m > 0$ holds, with $m$ independent of $a_1, a_2, a_3, X, Y.$
\end{lem}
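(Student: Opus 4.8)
The plan is to reduce Lemma~\ref{lem:uniform_on_surface} to the already-established Lemma~\ref{lem:uniform_bound_coordinate} by exhibiting, for each arc $a_2$ of the arc system $\mA$, a positive $6$-tuple of isotropic flags that ``surrounds'' the configuration $(\xi(a_3^+),\xi(a_3^-))$, $(\xi(a_1^-),\xi(a_1^+))$ and the vector $\vec v^+_{a_2}-\vec v^-_{a_2}$. Since $\mA$ is finite, there are only finitely many $\Gamma$-orbits of triples of consecutive lifted arcs $a_1,a_2,a_3$; after fixing one representative triple in each orbit, the $\SO(2n,2n-1)$-invariance clause in Lemma~\ref{lem:uniform_bound_coordinate} will let us transport the bound along the orbit, and taking the minimum over the finitely many orbits produces the uniform constant $m>0$.

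First I would set up the enclosing flags. For a fixed triple $a_1,a_2,a_3$ of consecutive lifted arcs in $\bH^2$, note that in $\partial_\infty\bH^2$ the endpoints $a_1^-,a_1^+$ lie in complementary intervals of the Cantor set $\Lambda_\Gamma$, and similarly for $a_3^\pm$; consecutivity of the arcs forces a cyclic ordering of the eight points $a_3^+,a_2^+,a_3^-,\dots$ analogous to the one exploited in the proof of Theorem~\ref{thm:properness}. Concretely, I would pick limit points $c_1,d_1,c_3,d_3\in\Lambda_\Gamma$ with $a_1^\pm$ lying ``outside'' $(c_1,d_1)$ and $a_3^\pm$ lying ``outside'' $(c_3,d_3)$ in such a way that $\xi(c_1),\xi(d_1),\xi(a_2^+),\xi(a_3^+),\xi(d_3),\xi(c_3),\xi(a_2^-),\xi(a_3^-)$ (suitably ordered and possibly reindexed) realizes the pattern $F_1,\dots,F_3,\dots,G_1$ required by Lemma~\ref{lem:uniform_bound_coordinate}, with $F_2=\xi(a_2^+)$, $G_2=\xi(a_2^-)$ carrying the vectors $f=\vec v^+_{a_2}$, $g=\vec v^-_{a_2}$. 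The key point, which follows from positivity of $\xi$ together with Lemma~\ref{lem:cyclic_triple_intervals} and Lemma~\ref{lem:intervals_proper}, is that then $\ival{\xi(a_3^+)}{\xi(a_3^-)}\subset\ival{F_3}{G_3}$ and $\ival{\xi(a_1^-)}{\xi(a_1^+)}\subset\ival{G_1}{F_1}$, so that every $X\in(\!(\xi(a_3^+),\xi(a_3^-))\!)$ and every isotropic $Y\in(\!(\xi(a_1^-),\xi(a_1^+))\!)$ are in the range covered by Lemma~\ref{lem:uniform_bound_coordinate}.

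Granting this, Lemma~\ref{lem:uniform_bound_coordinate} immediately yields $\vec x^0(X,Y)^*(\vec v^+_{a_2}-\vec v^-_{a_2})>m^{f,g}_{F_1,F_3,G_3,G_1}>0$ for this one triple. For an arbitrary consecutive triple $\gamma\cdot a_1,\gamma\cdot a_2,\gamma\cdot a_3$ in the same $\Gamma$-orbit, equivariance of $\xi$ and of the chosen vectors $\vec v^\pm$ means all six enclosing flags and the two vectors are translated by $\rho(\gamma)\in\SO(2n,2n-1)$; by the invariance clause the associated constant is unchanged. Finally I would let $m$ be the minimum of these finitely many positive constants, one per $\Gamma$-orbit of consecutive triples, which is positive and independent of $a_1,a_2,a_3,X,Y$, as claimed.

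The main obstacle I anticipate is the bookkeeping in the first step: verifying that the eight boundary points genuinely sit in the cyclic order needed to apply Lemma~\ref{lem:uniform_bound_coordinate}, i.e.\ that one can always choose the auxiliary Cantor-set points $c_i,d_i$ so that $\ival{\xi(a_1^-)}{\xi(a_1^+)}$ and $\ival{\xi(a_3^+)}{\xi(a_3^-)}$ are sandwiched inside disjoint intervals of the required form. This is exactly the kind of configuration argument that appears in the proof of Proposition~\ref{prop:intervals} and in Theorem~\ref{thm:properness} (the eight-flag picture of Figure~\ref{configuration}), so it should go through, but it requires care to track orientations and which side of each arc faces the basepoint. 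Everything after that is a direct invocation of Lemma~\ref{lem:uniform_bound_coordinate} plus a finiteness argument.
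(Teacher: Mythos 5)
Your overall strategy is the paper's: feed the configuration into Lemma \ref{lem:uniform_bound_coordinate}, use the $\SO(2n,2n-1)$-invariance clause to transport the constant along a $\Gamma$-orbit, and take the minimum over the finitely many $\Gamma$-orbits of consecutive triples of lifted arcs. However, your ``sandwiching'' step is a detour the paper does not take, and as you formulated it it can actually fail. There is no need to produce auxiliary flags $F_1,F_3,G_3,G_1$ whose intervals strictly contain $\ival{\xi(a_3^+)}{\xi(a_3^-)}$ and $\ival{\xi(a_1^-)}{\xi(a_1^+)}$: the ranges of $X$ and $Y$ in the statement are \emph{exactly} the intervals appearing in Lemma \ref{lem:uniform_bound_coordinate} once you set $F_1=\xi(a_1^+)$, $F_2=\xi(a_2^+)$, $F_3=\xi(a_3^+)$, $G_3=\xi(a_3^-)$, $G_2=\xi(a_2^-)$, $G_1=\xi(a_1^-)$, $f=\vec v^+_{a_2}$, $g=\vec v^-_{a_2}$. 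Positivity of this $6$-tuple is immediate: the six endpoints of three consecutive disjoint arcs are cyclically ordered in $\partial_\infty\bH^2$, and the extended map $\xi$ is positive on $\Lambda_\Gamma\cup\Lambda_{\mA}$. The paper's proof is precisely this direct application, followed by the finiteness-of-orbits and invariance argument you describe (which is correct as you state it).

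The concrete problem with your auxiliary step is the insistence that $c_1,d_1,c_3,d_3$ lie in $\Lambda_\Gamma$ and strictly separate the endpoints of $a_2$ from those of $a_1$ and $a_3$: consecutive arcs can exit the convex core through the same funnel, so that, say, $a_2^+$ and $a_3^+$ lie in the same complementary gap of the limit set, in which case no limit point lies between them and the required $c_3$ does not exist. Since the enclosing flags are unnecessary anyway, the fix is simply to drop that step and apply Lemma \ref{lem:uniform_bound_coordinate} with the arc endpoints themselves; with that modification your argument coincides with the paper's.
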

\proof By Lemma \ref{lem:uniform_bound_coordinate}, we know that $\vec x^0(X, Y)^*(
\vec v^+_{a_2} - 
\vec v^-_{a_2}) \geq C_{a_1,a_2,a_3} > 0$ for  $C_{a_1,a_2,a_3}:= m_{\xi(a_1^+),\xi(a_3^+),\xi(a_3^-),\xi(a_1^-)}^{v_{a_2}^+,v_{a_2}^-} .$ However, up to the action of $\Gamma < \SO(2n,2n-1),$ there are only finitely many configurations of three consecutive arcs. Because the quantity $\vec x^0(X, Y)^*(
\vec v^+_{a_2} - 
\vec v^-_{a_2})$ is invariant under the action of $\SO(2n,2n-1),$ that means there are only finitely many distinct $C_{a_1, a_2, a_3}.$ Define $$m = \min_{(a_1, a_2, a_3) \text{ consecutive arcs}}\{C_{a_1, a_2, a_3}\}>0.$$ 
\endproof 

Let $\mathcal K$ be a fundamental domain in $\bH^2$ for the action of $\Gamma$ bounded by some of the arcs in $\tilde \mA,$ call them $a_1, a_1', \ldots, a_k, a_k'$, and let $\gamma_i \in \Gamma$ be such that $\gamma_i a_i = a_i'.$ Then the $\gamma_i$ are a free generating set for $\Gamma.$ Define $\mathcal D$ to be the domain in $\bR^{2n,2n-1}$ bounded by the translated crooked hyperplanes $C(a_1), C(a_1'), \ldots, C(a_k), C(a_k').$ Due to the equivarience of crooked halfspaces and crooked hyperplanes,  $(\rho(\gamma_i), u(\gamma_i))\mC(a_i) = \mathcal C(\gamma \cdot a_i) = \mC(a_i').$ We can see this using the same technique as in the proof of Lemma \ref{well-defined}.

\begin{thm}\label{thm:domain}
    Let $\rho: \Gamma \to \SO(2n,2n-1)$ be a positive Anosov representation of a free group $\Gamma < \PSL_2(\bR)$ and let $\mA$ be a filling system of arcs on $ \bH^2/\Gamma.$ Let $u: \Gamma \to \bR^{2n,2n-1}$ be the $\rho(\Gamma)$-cocycle obtained from this data. Then $D$ is a fundamental domain for the action of $(\Gamma, u)$ on $\bR^{2n,2n-1}.$
\end{thm}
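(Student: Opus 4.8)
\textbf{Proof plan for Theorem \ref{thm:domain}.}

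The plan is to follow the classical Poincaré-style argument adapted to crooked fundamental domains, as in \cite{DGK} and \cite{bt2022}: one shows (a) the translates $(\rho,u)(\gamma)\mathcal D$ for $\gamma\in\Gamma$ have pairwise disjoint interiors, and (b) they cover all of $\bR^{2n,2n-1}$. For (a), the key structural fact is that the region $\mathcal D$ is cut out by the $2k$ translated crooked hyperplanes $\mathcal C(a_1),\mathcal C(a_1'),\dots,\mathcal C(a_k),\mathcal C(a_k')$, which by Lemma \ref{lem:disjointness} are pairwise disjoint. Since each $\mathcal C(a)$ separates $\bR^{4n-1}$ into the translated open halfspace $\half(a)$ and the complementary closed halfspace $\overline{\half}_{\widehat{E_a}}+\tilde u(a)$ (using Lemma \ref{lem:opposite_halfspace}), the combinatorics of which side of each hyperplane a point lies on mirrors exactly the combinatorics of which side of each lift of $a_1,\dots,a_k$ in $\tilde{\mathcal A}$ a point of $\bH^2$ lies on. Concretely, I would set up a $\Gamma$-equivariant bijection between the tiles of $\bH^2\setminus(\Gamma\cdot\{a_1,\dots,a_k\})$ and the connected components of the complement of $\bigcup_{\gamma}(\rho,u)(\gamma)\mathcal C(a_i)$ in $\bR^{4n-1}$, sending the base tile $\mathcal K$ to $\mathcal D$. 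The equivariance $(\rho(\gamma_i),u(\gamma_i))\mathcal C(a_i)=\mathcal C(a_i')$ noted before the statement, together with the fact that $\tilde u$ was defined precisely to record displacement between adjacent tiles with the half-weight on the crossed arc, makes this bijection well-defined on the level of combinatorial types.

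The heart of the argument, and the step I expect to be the main obstacle, is verifying that the combinatorial picture actually embeds faithfully into $\bR^{4n-1}$ — i.e.\ that two distinct group elements $\gamma\ne\gamma'$ genuinely produce halfspace translates whose nesting pattern forces $(\rho,u)(\gamma)\mathcal D$ and $(\rho,u)(\gamma')\mathcal D$ to be disjoint and not merely ``combinatorially disjoint''. This is where Lemmas \ref{lem:uniform_bound_coordinate} and \ref{lem:uniform_on_surface} enter: they give a \emph{uniform} positive lower bound on $\vec x^0(X,Y)^*$ applied to each elementary contribution $\vec v_{a_2}^+-\vec v_{a_2}^-$, over all choices of $X,Y$ in the relevant flag intervals. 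Reading a reduced word $\gamma=\gamma_{i_1}^{\epsilon_1}\cdots\gamma_{i_\ell}^{\epsilon_\ell}$, the successive crooked halfspaces $\mathcal C(a_{i_1}),(\rho,u)(\gamma_{i_1}^{\epsilon_1})\mathcal C(a_{i_2}),\dots$ are strictly nested by Lemma \ref{lem:disjointness} combined with the stem-quadrant inclusions (the vector $\tfrac12(\vec v_a^+-\vec v_a^-)$ lies in the appropriate stem quadrant, exactly as exploited in the proof of Lemma \ref{lem:disjointness}), and the uniform bound guarantees this nesting does not degenerate in the limit. Thus the images of $\mathcal D$ under distinct group elements lie in distinct, strictly separated chambers.

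For the covering statement (b), I would argue that every point $\vec p\in\bR^{4n-1}$ lies in only finitely many of the translated halfspaces, since a point of $\bH^2$ lies on the ``far'' side of only finitely many of the arcs separating it from $\mathcal K$; pushing $\vec p$ across finitely many crooked hyperplanes by applying suitable generators $(\rho(\gamma_i)^{\pm1},u(\gamma_i)^{\pm1})$ eventually lands $\vec p$ in $\mathcal D$. To make ``finitely many'' precise one uses that the filling arc system $\mathcal A$ together with the convex cocompactness of $\Gamma$ makes the tiling of $\bH^2$ locally finite, and that the crooked halfspaces inherit the same nesting structure; this is where properness of the $\Gamma$-action on $\bR^{4n-1}$ — already established in Theorem \ref{thm:properness} — can be invoked to rule out any pathological accumulation, so the two halves of the proof reinforce each other. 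Finally, since $\mathcal D$ is bounded by $2k$ pairwise disjoint crooked hyperplanes identified in pairs by the free generators $\gamma_1,\dots,\gamma_k$, the quotient is visibly obtained by gluing a ball along $k$ handles, giving the handlebody statement; I would phrase this last part by noting $\mathcal D$ is homeomorphic to a closed ball with $2k$ disjoint open disks on its boundary sphere removed, and the identifications realize the standard handlebody presentation.
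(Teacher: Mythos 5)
Your treatment of disjointness (a) is essentially the paper's: Lemma \ref{lem:disjointness} together with the stem-quadrant inclusions already gives that distinct translates of $\mathcal D$ have disjoint interiors, and your combinatorial bijection between tiles of $\bH^2$ and chambers of the complement is a more elaborate packaging of the same facts. The genuine gap is in the covering step (b). You assert that every point of $\bR^{4n-1}$ lies in only finitely many translated crooked halfspaces ``since a point of $\bH^2$ lies on the far side of only finitely many arcs,'' and you propose to rule out pathological accumulation by invoking the properness established in Theorem \ref{thm:properness}. Neither of these works. The point in question lives in $\bR^{4n-1}$, not in $\bH^2$, so the local finiteness of the hyperbolic tiling does not transfer: a priori an infinite nested chain of translated crooked halfspaces $\half(a_1)\supset\half(a_2)\supset\cdots$ (indexed by consecutive lifts of arcs) can have nonempty intersection, and any point of such an intersection would lie in no translate of $\mathcal D$. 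Properness of the action does not exclude this — a proper action can perfectly well fail to have a given domain's translates cover affine space — and, moreover, the stated purpose of this section is to give a proof of properness independent of Theorem \ref{thm:properness}, so leaning on it is also against the logic of the argument even where it is not outright circular.

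The missing idea is the quantitative emptiness of these nested intersections, and this is exactly where the paper spends its effort. Supposing $p\in\bigcap_k\half(a_k)$, Proposition \ref{prop:quadruples} lets one write $p = x_k + \tilde u(a_k)$ with $x_k\in (X_k^{(2n)})_+$ for some $X_k\in\ival{\xi(a_k^+)}{\xi(a_k^-)}$; fixing an isotropic $Y\in\ival{\xi(a_1^-)}{\xi(a_1^+)}$, Lemma \ref{Positivity} and the uniform bound of Lemma \ref{lem:uniform_on_surface} give
\[
\vec{x}^0(X_k,Y)^*(p)\;\ge\;(k-2)m
\]
for all $k$, while the functionals $\vec{x}^0(X_k,Y)^*$ converge as $X_k$ converges, so their values on the fixed vector $p$ must stay bounded — a contradiction. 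You do cite Lemmas \ref{lem:uniform_bound_coordinate} and \ref{lem:uniform_on_surface}, but you deploy them only to keep the nesting of chambers ``non-degenerate'' in the disjointness discussion; the place they are actually indispensable is this divergence estimate for the covering claim, which your proposal leaves unproved. Without it, the assertion that pushing a point across finitely many crooked hyperplanes lands it in $\mathcal D$ has no justification.
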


\proof By definition of $\mathcal D,$ we get that $(\Gamma, u)$ identifies the sides of $\mathcal D$.

Lemma \ref{lem:disjointness} and its proof guarantee that $\mathcal D$ and $\rho(\gamma) \cdot D + u(\gamma)$ have disjoint interiors for every non-trivial $\gamma \in \Gamma.$

Suppose $(\Gamma, u) \cdot D$ is not all of affine space. Then there is some sequence $(a_k)_k$ of consecutive lifts of arcs such that $\bigcap_k \half(a_k) \neq \emptyset.$ We can assume they are all oriented away from $p_0$ on one side of $a_1.$ Let $p$ be an element in the intersection $\bigcap_k \half(a_k)$. By Proposition \ref{prop:quadruples}, we can write $p = x_k + \tilde u(a_k)$ with $x_k \in (X_k^{(2n)})_+$ for some $X_k$ in $((\xi(a_k^+), \xi(a_k^-)))$ for every $k.$ Choose an isotropic $Y \in ((\xi(a_1^-),\xi(a_1^+))).$

For each $k,$ we have $\vec x^0(X_k, Y)^*(x_k)>0,$ as $x_k \in (X_k^{(2n)})_+ = X_k^{(2n-1)}+\bR^+\vec x^0(X_k, Y).$

Denote by $\vec v_a = 
\vec v_a^+ - 
\vec v^-_a$ and estimate
\begin{align*}
\vec x^0(X_k, Y)^*(\tilde u(a_k)) &= \sum_{i = 1}^k \vec x^0(X_k, Y)^*(\vec v_{a_i}) - \frac12 \vec x^0(X_k, Y)^*(\vec v_{a_k}) \\
&= \vec x^0(X_k, Y)^*(\vec v_{a_1}) + \frac12 \vec x^0(X_k, Y)^*(\vec v_{a_k}) + \sum_{i = 2}^{k-1} \vec x^0(X_k, Y)^*(\vec v_{a_i}) \\
&\geq  0 + \sum_{i = 2}^{k-1} \vec x^0(X_k, Y)^*(\vec v_{a_i}) \\
&\geq (k-2)m,
\end{align*}
where the first inequality is due to Lemma \ref{Positivity} and the second inequality is due to Lemma \ref{lem:uniform_on_surface}, using the fact that $Y \in ((\xi(a_1^-),\xi(a_1^+) )) \subset ((\xi(a_{i-1}^-),\xi(a_{i-1}^+)))$ and $X_k \in ((\xi(a_k^+),\xi(a_k^-) )) \subset ((\xi(a_{i+1}^+),\xi(a_{i+1}^-)))$ for all $2 \leq i \leq k-1.$

We therefore have the estimate $$\vec x^0(X_k, Y)^*(p) = \vec x^0(X_k, Y)^*(x_k + \tilde u(a_k)) \geq (k-2)m$$ for all $k.$ However, the functionals $\vec x^0(X_k, Y)^*$ converge to $x^0(X,Y)^*$ where $X=\lim_{k\to\infty} X_k$, a contradiction.

\endproof

\printbibliography
\end{document}